\documentclass[12pt]{amsart}

\usepackage{amsmath,amssymb,amsfonts,fullpage,enumerate,graphicx,caption,subcaption}

\newtheorem{thm}{Theorem}[section]
\newtheorem{theorem}[thm]{Theorem}
\newtheorem{cor}[thm]{Corollary}
\newtheorem{proposition}[thm]{Proposition}
\newtheorem{obs}[thm]{Observation}
\newtheorem{lemma}[thm]{Lemma}
\newtheorem{conj}[thm]{Conjecture}
\newtheorem*{problem}{Problem}

\theoremstyle{definition}
\newtheorem{defn}[thm]{Definition}

\makeatletter
\let\c@equation\c@thm
\makeatother
\numberwithin{equation}{section}

\newcommand{\cart}{\, \Box \,}
\newcommand{\ceil}[1]{\left \lceil #1 \right \rceil}
\newcommand{\floor}[1]{\left \lfloor #1 \right \rfloor}

\DeclareMathOperator{\rad}{rad}
\DeclareMathOperator{\capture}{c}
\newcommand{\search}{\capture'}
\newcommand{\lcap}{\capture_{\ell}}
\newcommand{\lloc}{\search_{\ell}}
\DeclareMathOperator{\weakmonocap}{mc}
\newcommand{\weakmonoloc}{\weakmonocap'}

\newcommand{\lweakmonocap}{\weakmonocap_{\ell}}
\newcommand{\lweakmonoloc}{\weakmonoloc_{\ell}}

\begin{document}

\bibliographystyle{plain}

\title{Limited-visibility Cops and Robbers on Hamming Graphs}

\author{John Jones}
\address{Department of Mathematics and Applied Mathematical Sciences, University of Rhode Island, University of Rhode Island, Kingston, RI, USA, 02881}
\email{\tt jj\_jones@uri.edu}

\author{William B. Kinnersley}
\address{Department of Mathematics and Applied Mathematical Sciences, University of Rhode Island, University of Rhode Island, Kingston, RI, USA, 02881}
\email{\tt billk@uri.edu}

\subjclass[2020]{Primary 05C57}
\keywords{pursuit-evasion games, cops and robbers, Cartesian products, limited-visibility, imperfect information}

\begin{abstract}
In the classic game of \textit{Cops and Robbers}, a team of cops pursues a robber through a graph.  The traditional model of Cops and Robbers operates under the assumption that the cops know the robber's location at all times.  Recently, however, there has been growing interest in models wherein the cops have only partial information about the robber's location.  In this paper, we study the \textit{limited-visibility} variant of Cops and Robbers, in which the cops only know the robber's position if some cop is within a fixed distance of the robber.  For this variant of the game, we give bounds on the number of cops needed to capture a robber on Hamming graphs, and we use these results to settle several open problems posed by Clarke et al. \cite{CCDDFM20}.
\end{abstract}

\maketitle

\begin{section}{Introduction}

\textit{Cops and Robbers} is a well-studied model of pursuit and evasion, in which a team of cops chases a robber through a graph.  The cops and robber all occupy vertices of the graph, and they take turns moving from vertex to adjacent vertex.  If any cop ever occupies the same vertex as the robber, then the cops \textit{capture} the robber and win the game; the robber wins if he can evade the cops indefinitely.  In the traditional model of Cops and Robbers, the players have perfect information -- the cops always know the location of the robber, and vice-versa.  

Cops and Robbers was introduced independently by Quillot \cite{Qui78} and by Nowakowski and Winkler \cite{NW83}; both papers characterized the graphs on which a single cop can capture the robber.  Subsequently, Aigner and Fromme \cite{AF84} considered the game with more than one cop.  Since then, the main focus of study in the area has been to determine the minimum number of cops needed to capture a robber on a given graph $G$; this quantity is known as the \textit{cop number} of $G$ and is denoted $c(G)$.

Cops and Robbers has been studied extensively, and many variants have been introduced; see \cite{BN11} for a thorough survey.  Recently, there has been increasing interest in models of the game wherein the players have incomplete information.  For example, one can consider the model wherein the robber is ``invisible'' to the cops, meaning that the cops do not know the robber's location until they achieve capture.  The minimum number of cops needed to guarantee capture of a robber on a graph $G$ under this model is the \textit{zero-visibility cop number} of $G$, denoted $c_0(G)$.  This parameter was introduced by Dereniowski, Dyer, Tifenbach, and Yang \cite{DDTY13, DDTY15a, DDTY15b}, who showed that $c_0(G)$ is bounded above by the pathwidth of $G$, characterized those trees $T$ satisfying $c_0(T) = k$ for given $k$, gave a linear-time algorithm for computing the zero-visibility cop number of a tree, and showed that the problem of computing $c_0(G)$ is NP-hard in general.

This model of Cops and Robbers was generalized by Clarke, Cox, Duffy, Dyer, Fitzpatrick, and Messinger \cite{CCDDFM20}, who introduced a variant of the game wherein the cops can only ``see'' vertices that are ``close'' to a cop.  More precisely, in the game of \textit{$\ell$-visibility Cops and Robbers} for a nonnegative integer $\ell$, the cops do not know the robber's location unless some cop occupies a vertex within distance $\ell$ of the robber.  (The robber still knows the cops' locations at all times.)  Clarke et al. introduced two parameters related to this game: $\lloc(G)$ denotes the minimum number of cops needed to ensure that the cops can eventually see the robber when playing the game on the graph $G$, while $\lcap(G)$ denotes the minimum number of cops needed to actually capture the robber.  They established several relationships between $\lloc(G)$, $\lcap(G)$, and $c(G)$, and they gave a variety of results regarding the behavior of $\lcap$ on trees. 


In this paper, we focus on determining the $\ell$-visibility cop numbers of \textit{Hamming graphs}, that is, Cartesian products of complete graphs.  The paper is laid out as follows.  In Section \ref{sec:definitions}, we give a formal definition of the $\ell$-visibility game and put forth some preliminary observations that will assist in our analysis of the game.  In Section \ref{sec:dimension_2}, we study the limited-visibility game on two-dimensional Hamming graphs: in Theorems 3.2 and 3.3 we determine $\search_1(K_n \cart K_n)$ and $\capture_1(K_n \cart K_n)$, and in Theorem 3.4 we determine $\search_0(K_n \cart K_n)$ and $\capture_0(K_n \cart K_n)$.  In Section \ref{sec:high_dimension}, we consider the $(d-1)$-visibility game on $d$-dimensional Hamming graphs; in particular, we show in Theorem \ref{thm:Gnd_lower} that $\search_{d-1}(H(d,n)) \ge n/(d+1)$, where $H(d,n)$ denotes the $d$-fold Cartesian product of $K_n$, and we give general bounds on $\search_{\ell}(H(d,n))$.  Finally, in Section \ref{sec:conclusions}, we use these results to resolve Problems 5.3 and 5.5 from \cite{CCDDFM20}, as well as to give a partial resolution to Problem 5.1.\\
\end{section}

\begin{section}{Definitions and Notation}\label{sec:definitions}

\begin{subsection}{Cops and Robbers}


The game of \textit{Cops and Robbers} has two players: a team of one or more \textit{cops} and a single \textit{robber}.  (Throughout this paper, for convenience in distinguishing between the two players, we sometimes use the pronouns she/her/hers to refer to a cop and the pronouns he/him/his to refer to the robber.)  All cops and the robber occupy vertices of some fixed graph $G$; multiple cops may simultaneously occupy a single vertex.  At the beginning of the game, the cops choose their initial positions, after which the robber views the cops' positions and chooses his position in response.  Henceforth, the game proceeds in \textit{rounds}, each of which consists of a \textit{cop turn} followed by a \textit{robber turn}.  On the cop turn, each cop either remains on their current vertex or moves to an adjacent vertex.  Likewise, on the robber's turn, the robber may either remain in place or move to an adjacent vertex.  If some cop ever occupies the same vertex as the robber, then that cop \textit{captures} the robber and the cops win the game.  Conversely, the robber wins if he is able to evade capture indefinitely, no matter how the cops play.  The usual focus when studying Cops and Robbers is to determine, for a given graph $G$, the minimum number of cops needed to ensure capture of a robber on $G$; this quantity is the \textit{cop number} of $G$, denoted $c(G)$.  

In the original model of Cops and Robbers, the players have perfect information: at all times, the cops know the location of the robber and vice-versa.  This assumption of perfect information, while convenient, is not always reasonable.  In this paper, we consider a variant of the game in which the cops can only ``see'' the robber when he is ``close to'' some cop.  For a nonnegative integer $\ell$, the game of \textit{$\ell$-visibility Cops and Robbers} is played similarly to the original game, with one change: the robber's location is not revealed to the cops unless some cop occupies a vertex within distance $\ell$ of the robber.  (When this happens, we say that the cop \textit{sees} the robber, and the robber's position becomes known to all of the cops -- even those that do not themselves see the robber.)  The robber still knows the cops' locations at all times.  As in the original game, the cops aim to capture the robber by occupying the robber's vertex.  However, we now require that the cops be able to guarantee capture of the robber within a finite number of rounds; this prevents the cops from moving randomly and ``luckily'' playing an optimal strategy for the perfect-information game.  Equivalently, we may suppose that the robber is ``omniscient'', meaning that he is able to anticipate the cops' movements and respond optimally.  (A variant of limited-visibility cops and robbers without this restriction is considered in \cite{KMP13}.)  $\ell$-visibility Cops and Robbers was first introduced in full generality by Clarke et al. \cite{CCDDFM20}, although the special case $\ell=0$ had previously been considered by Dereniowski, Dyer, Tifenbach, and Yang \cite{DDTY13, DDTY15a, DDTY15b}, and the case $\ell=1$ had been considered by Tang \cite{Tan04}.

In $\ell$-visibility Cops and Robbers, the game often consists of two distinct ``phases''.  
In the first phase, the cops simply aim to determine the robber's position; in the second phase the cops, having located the robber, seek to capture him.  Thus motivated, Clarke et al. introduced two graph parameters relating to this game.  The minimum number of cops needed to ensure capture of a robber on $G$ is the \textit{$\ell$-visibility cop number} of $G$, denoted $\lcap(G)$, while the minimum number of cops needed to see -- but not necessarily capture -- the robber is denoted by $\lloc(G)$.  Clearly $\lloc(G) \le \lcap(G)$, since the cops cannot capture the robber without also seeing him.  Clarke et al. \cite{CCDDFM20} also established the following connection between $\lloc(G), \lcap(G)$, and $c(G)$:

\begin{theorem}[\cite{CCDDFM20}, Theorem 3.1]\label{thm:seeing_plus_one}
For any graph $G$ and any $\ell \ge 2$, we have
$$\lcap(G) \le \max\{\lloc(G), c(G)+1\}.$$
\end{theorem}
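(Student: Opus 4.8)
The plan is to give the cops an explicit two-phase strategy using $k=\max\{\lloc(G),\,c(G)+1\}$ cops. In the \emph{locating} phase the cops disregard capture and simply run an optimal strategy for the locating game; since $k\ge\lloc(G)$ this is available, and it guarantees that after finitely many rounds some cop comes within distance $\ell$ of the robber, so that his position becomes known. In the \emph{capturing} phase I would designate one cop as a \emph{tracker}, whose sole job is to keep the robber visible, and let the remaining $k-1\ge c(G)$ cops ignore the tracker and play an optimal strategy for the ordinary perfect-information game. The point is that if the tracker can guarantee the robber is visible at the start of every cop turn, then the capturing cops effectively enjoy perfect information and so capture in finitely many rounds with only $c(G)$ cops, exactly as in the classical game. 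That $c(G)$ cops succeed from the arbitrary positions in which the locating phase leaves them, rather than from an optimal initial placement, is standard: they may first walk in finitely many rounds to an optimal configuration $P^\ast$ and then apply the classical winning strategy, since every state with the cops occupying $P^\ast$ is cop-win. Summing the two phases yields $\lcap(G)\le\max\{\lloc(G),\,c(G)+1\}$.

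The heart of the argument is the tracking claim: for $\ell\ge 2$, a single cop can maintain the invariant that at the start of each cop turn the robber lies within distance $\ell$ of the tracker, and hence is visible. To propagate the invariant, on its turn the tracker steps one vertex along a shortest path toward the robber's current (and, by the invariant, known) position; starting from distance at most $\ell$, this leaves the tracker within distance $\ell-1$ of the robber, and if that distance is $0$ the tracker has in fact captured him. The robber then moves at most one step, so he ends within distance $(\ell-1)+1=\ell$ of the tracker and the invariant is restored for the next round. This is exactly where the hypothesis $\ell\ge 2$ is used: it provides a one-step buffer, letting the tracker retain visibility while stopping short of the robber's vertex. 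When $\ell=1$ the same bookkeeping would force the tracker onto the robber, i.e.\ to capture; an omniscient robber can prevent this and then slip to distance $2$, so a lone tracker cannot even maintain contact, which is why $\ell=1$ is excluded.

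I expect the main obstacle to be the hand-off between the phases, namely \emph{establishing} the tracking invariant from the lone sighting that the locating phase promises. The difficulty is real: against an omniscient robber the first sighting typically occurs at the \emph{end} of a cop turn, at distance exactly $\ell$, whereupon the robber may step out to distance $\ell+1$ and vanish before the tracker can tighten its grip. The cops do know that the robber now occupies a neighbor of his last-seen vertex, so the tracker can step toward that vertex and is thereby guaranteed to pull the robber back within distance $\ell$, reacquiring him at the end of the next cop turn. The technical core is then to show that, using the $\ell\ge 2$ buffer together with the slack afforded by the extra cop, this end-of-turn reacquisition can be upgraded to the start-of-turn visibility the capturing cops need; once that invariant is in force the two phases compose cleanly and the $c(G)$ capturing cops finish the job.
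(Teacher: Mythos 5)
The paper itself does not prove this statement (it is quoted from \cite{CCDDFM20}), so your proposal must stand on its own, and it has a genuine gap --- one you yourself flag as ``the technical core'' and then leave unresolved: \emph{establishing} start-of-turn visibility. Your maintenance step is correct: if at the end of some cop turn a cop is within distance $\ell-1$ of the robber, then after the robber's move he is still within distance $\ell$, hence visible at the start of the next cop turn, and that cop can step toward him to restore distance at most $\ell-1$. But a lone tracker can never reach that state. The first sighting occurs at the end of a cop turn at distance possibly exactly $\ell$; the robber then steps to distance $\ell+1$ and vanishes; the tracker's move toward the last-seen vertex brings her only back to distance at most $\ell$ from the robber's new position at the end of the next turn, and the robber can repeat this indefinitely (picture him fleeing around a long cycle, re-sighted at distance exactly $\ell$ every turn). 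So ``end-of-turn reacquisition'' never upgrades itself to the start-of-turn visibility your capturing cops require, no matter how long the tracker plays; the upgrade cannot come from the tracker at all, and your phases therefore do not compose.

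The missing idea is that the upgrade is performed by the other $c(G)$ cops. End-of-turn visibility means that when the cops move in round $s+1$ they always know the robber's previous position $r_s$, and the sequence of last-seen positions is itself a legal robber trajectory (a ``phantom'' robber, one move behind the true one) whose position is known at every cop turn --- indeed a phantom that commits to each move before seeing the cops' reply, hence weaker than a classical adversary. So $c(G)$ cops running a classical winning strategy against the phantom will, after finitely many rounds, occupy $r_s$ at the end of some cop turn $s+1$, at which moment the true robber sits in $N[r_s]$, within distance $1 \le \ell-1$ of that cop. This is precisely where $\ell \ge 2$ enters, and precisely where $\ell=1$ breaks: there, distance $1$ equals $\ell$, so the robber steps out to distance $2$ and disappears again. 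Once a cop is within distance $\ell - 1$ at the end of a turn, your invariant is finally in force: that cop becomes the tracker, the robber is visible at the start of every subsequent cop turn, and the remaining $c(G)$ cops capture him with genuine perfect information, giving $\lcap(G) \le \max\{\lloc(G), c(G)+1\}$. Without this phantom-capture step (or some substitute for it), the ``slack afforded by the extra cop'' that you invoke is never actually cashed in.
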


As Theorem \ref{thm:seeing_plus_one} suggests, the search phase of the game is often the bottleneck for the cops.  The cops typically cannot hope to locate the robber in a single turn; rather, they must gradually whittle down the set of possible locations for the robber over the course of several rounds.  Following \cite{CCDDFM20}, we introduce the following terminology.  At any point in the game, we say that a vertex is \textit{clean} if it is seen by the cops, or if it is unseen but the robber is known not to be located there; any vertex that is not clean is said to be \textit{dirty}.  
A vertex that was once clean, but subsequently becomes dirty, is said to have been \textit{recontaminated}.    

Note that the cops cannot clean all vertices of the graph without seeing the robber, and conversely, the cops cannot guarantee seeing the robber without cleaning all vertices; thus, proving that $\lloc(G) \le k$ is equivalent to proving that $k$ cops can clean all vertices of $G$.  (When establishing bounds on $\lloc(G)$, we ignore the prospect of capturing the robber, and say that the cops win if they ever see the robber; thus, the robber aims solely to evade detection.)  Dirty vertices can only become clean during a cop turn, by virtue of cops moving into new locations; similarly, clean vertices can only become dirty during a robber turn.  In fact, during a robber turn, every clean vertex that is both adjacent to a dirty vertex and not within distance $\ell$ of any cop will necessarily become recontaminated.  Thus, we sometimes refer to the robber turn in a given round of the game as the \textit{recontamination phase} of that round.

We say that a cop strategy is \textit{(strictly) monotonic} if no recontamination ever occurs; that is, once a vertex is clean, it remains so for the remainder of the game.  Dyer et al. \cite{DDTY15b} showed that zero-visibility Cops and Robbers is not monotonic, i.e. there may exist non-monotonic cop strategies that use fewer cops than any monotonic strategy.  This is due in part to the usefulness of a technique known as \textit{vibrating}, wherein a cop moves from a vertex $u$ to a vertex $v$ and then, on the next turn, moves back to $u$.  When a cop vibrates, one or more vertices may briefly become recontaminated but will be cleaned before the next robber turn.  Thus, even though recontamination occurs, the recontaminated vertices are never given a chance to recontaminate their neighbors.  Thus motivated, Dyer et al. defined a cop strategy to be \textit{weakly monotonic} if any vertices that become recontaminated are subsequently cleaned on the following cop turn.

Formally, let $\mathcal{R}_i$ denote the set of dirty vertices immediately prior to the $i^{th}$ cop turn and let $\mathcal{S}_i$ denote denote the set of dirty vertices immediately after the $i^{th}$ cop turn.  We say that a cop strategy is weakly monotonic if $\mathcal{S}_i \subseteq \mathcal{S}_{i+1}$ for all $i$; we say that the strategy is (strictly) monotonic if $\mathcal{S}_i \subseteq \mathcal{S}_{i+1}$ and $\mathcal{R}_i \subseteq \mathcal{R}_{i+1}$ for all $i$.  


As it turns out, $\ell$-visibility Cops and Robbers is also not weakly monotonic; that is, there may be non-weakly-monotonic cop strategies that use fewer cops than any weakly monotonic strategy (as shown by Dyer et al. \cite{DDTY15b} for the case $\ell=0$ and by Clarke et al. \cite{CCDDFM20} for $\ell \ge 1$).  However, weak monotonicity is still a desirable feature of a cop strategy, and as such one may be interested in determining optimal weakly monotonic strategies.  Thus motivated, let $\lweakmonoloc(G)$ (respectively, $\lweakmonocap(G)$) denote the minimum number of cops needed to see (resp. capture) a robber on $G$ using a weakly monotonic strategy.
\end{subsection}

\begin{subsection}{Hamming Graphs}

Our focus throughout the paper will be on \textit{Hamming graphs}, i.e. Cartesian products of complete graphs.  

\begin{defn}
The \textit{Cartesian product} of graphs $G$ and $H$, denoted $G \cart H$, is the graph with vertex set $G \times H$, where $(u,v)$ is adjacent to $(u',v')$ if and only if either
\begin{itemize}
\item $uu' \in E(G)$ and $v = v'$, or
\item $u = u'$ and $vv' \in E(H)$.
\end{itemize}
\end{defn}

\begin{defn}
For positive integers $d$ and $n$, the \textit{Hamming graph} $H(d,n)$ is the Cartesian product of $d$ copies of $K_n$.  We refer to $d$ as the \textit{dimension} of $H(d,n)$.
\end{defn}

We typically view the vertex set of $H(d,n)$ as the set of all ordered $d$-tuples with entries in $\{1, \dots, n\}$, where vertex $(x_1, \dots, x_d)$ is adjacent to $(y_1, \dots, y_d)$ provided that the two $d$-tuples differ in exactly one coordinate.\\
\end{subsection}
\end{section}

\begin{section}{Low-Dimensional Hamming Graphs}\label{sec:dimension_2}

In this section, we determine the $\ell$-visibility cop numbers for 1- and 2- dimensional Hamming graphs.  1-dimensional Hamming graphs are simply complete graphs and, as such, are fairly straightforward:

\begin{obs}\label{obs:simple}
For every positive integer $n$,
\begin{itemize}
\item \cite{Tan04} $c'_0(H(1,n)) = c_0(H(1,n)) = \ceil{n/2}$;
\item \cite{CCDDFM20} $c'_{\ell}(H(1,n)) = c_{\ell}(H(1,n)) = 1$ for all $\ell \ge 1$.
\end{itemize}
\end{obs}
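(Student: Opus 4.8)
The plan is to treat the two cases separately, using throughout the fact that $H(1,n)$ is simply the complete graph $K_n$, in which every pair of distinct vertices lies at distance exactly $1$.

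The case $\ell \ge 1$ is immediate. Since any two distinct vertices of $K_n$ are at distance $1 \le \ell$, a single cop placed at any vertex already sees the entire graph, so one cop suffices to locate the robber and $c'_{\ell}(H(1,n)) = 1$. For capture, the lone cop always knows the robber's exact position $v$; since the cop is adjacent to $v$ (or already occupies it), she moves onto $v$ on her turn and captures immediately, giving $c_{\ell}(H(1,n)) = 1$. As both parameters are trivially at least $1$, equality holds.

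For $\ell = 0$, first note that a cop sees a vertex only by occupying it, so seeing the robber coincides with capturing him; hence $c'_0(H(1,n)) = c_0(H(1,n))$ and it suffices to analyze capture. I would track the set of possible robber locations (the dirty vertices). The key structural observation is that, because $K_n$ is complete, on any robber turn every non-cop vertex is adjacent to every dirty vertex; thus if even one dirty vertex remains after a cop turn, then after the ensuing robber turn the set of possible robber locations is exactly $V \setminus C$, where $C$ denotes the current set of cop positions. Writing $k$ for the number of cops, the cops win precisely when they can move so that their new positions cover all currently-possible robber locations. This yields both bounds. For the upper bound, $k = \ceil{n/2}$ cops place on an arbitrary set $C_0$ of $\ceil{n/2}$ vertices; on their first turn they move to cover the remaining $\floor{n/2} \le k$ vertices $V \setminus C_0$. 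Since the robber must start in $V \setminus C_0$, this captures him regardless of his (unknown) position, so $c_0(H(1,n)) \le \ceil{n/2}$. For the lower bound I would show that $k$ cops with $2k < n$ can never win: by the structural observation, after each robber turn the set of possible locations has size $n - |C| \ge n - k$, so the cops can eliminate at most $k$ of these on their next turn, leaving at least $n - 2k \ge 1$ possible locations. An induction on the round number shows this set is never emptied, so the robber evades detection forever. Since $2k < n$ is equivalent to $k \le \ceil{n/2} - 1$, we conclude $c_0(H(1,n)) \ge \ceil{n/2}$.

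The only delicate point is the lower bound: one must argue that a perpetually nonempty set of possible locations really does furnish a valid evasion strategy for the (omniscient) robber, rather than merely a position consistent with the cops' observations at a single moment. This is the standard fugitive-invariant argument -- the possible-location set is closed under the game dynamics, so nonemptiness at every stage guarantees an infinite uncaptured play -- but it must be stated with care given the finite-capture requirement built into the $\ell$-visibility game.
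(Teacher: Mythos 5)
Your proposal is correct, but it is worth pointing out that the paper does not prove this statement at all: it is an \emph{Observation} recording known results, with the $\ell = 0$ case attributed to Tang \cite{Tan04} and the $\ell \ge 1$ case to Clarke et al.\ \cite{CCDDFM20}, and no argument is given in the text. What you have supplied is a self-contained elementary verification of those cited facts, and it holds up. Your $\ell \ge 1$ argument (diameter $1$, so a lone cop always sees the robber and captures by moving onto his current vertex) is exactly right, as is the identification $c'_0 = c_0$ for $\ell = 0$, since at distance $0$ seeing and capturing coincide. Your covering argument is also sound: for the upper bound, $\ceil{n/2}$ cops starting on a set $C_0$ can, in one move, occupy all $\floor{n/2} \le \ceil{n/2}$ vertices of $V \setminus C_0$, where the robber must be; for the lower bound, the completeness of $K_n$ forces the dirty set to reset to $V \setminus C$ after every robber turn, so $k$ cops with $2k < n$ always leave at least $n - 2k \ge 1$ dirty vertices after their move. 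Your final caveat is the right one, and it is handled by the standard argument: against a robber who survives, the cops never make an observation, so their trajectory under any fixed strategy is determined, the dirty-set evolution is deterministic, and the tree of surviving robber walks is infinite and finitely branching, so K\H{o}nig's lemma yields an infinite evading walk; this is precisely why perpetual nonemptiness of the dirty set defeats the finite-capture requirement. In short, your proof is a legitimate, more detailed route to a statement the paper handles purely by citation; the only thing it buys is self-containedness, which is perfectly reasonable for a two-line observation about $K_n$.
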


2-dimensional Hamming graphs -- sometimes referred to as ``rook's graphs'' -- are more complex.   As usual, we view the vertex set of $H(2,n)$ as the set of ordered pairs $(i,j)$ with $i,j \in \{1, \dots, n\}$; for fixed $i$ and $j$, we refer to the set of vertices $(i,k)$ with $k \in \{1, \dots, n\}$ as ``column $i$'' and to the set of vertices $(k,j)$ with $k \in \{1, \dots, n\}$ as ``row $j$''.  We say that a cop \textit{sees} a row or column of $H(2,n)$ if that cop sees every vertex in that row or column.

For the original model of Cops and Robbers, Neufeld and Nowakowski showed in \cite{NN98} that $c(H(2,n)) = 2$.  Since $H(2,n)$ has diameter 2, it follows that whenever $\ell \ge 2$, we have $\lloc(H(2,n)) = 1$ and  $\lcap(H(2,n)) = c(H(2,n)) = 2$; hence we need only investigate the cases $\ell = 0$ and $\ell = 1$.  We begin with the latter.  




\begin{theorem}\label{thm:clique_product_search}
For all positive integers $n$, we have $c'_1(H(2,n)) = \ceil{\frac{n+1}{3}}$. 
\end{theorem}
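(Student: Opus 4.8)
The plan is to prove the equality by establishing matching upper and lower bounds, working throughout in the ``cleaning'' reformulation from Section~\ref{sec:definitions}. Since the cops learn nothing about the robber until they first see him, a search strategy is simply a predetermined sequence of cop moves, and the cops succeed exactly when this sequence drives the dirty set to $\emptyset$. The first thing I would record is the geometry of vision on $K_n \cart K_n$: a cop at $(i,j)$ sees precisely row $j$ together with column $i$, so a team of $k$ cops sees a union of at most $k$ rows and $k$ columns, and in particular a single cop lying anywhere in column $i$ cleans all of column $i$ at once. With this in hand I would recast the cops' objective as manoeuvring the dirty set into a \emph{coverable} configuration, i.e.\ one contained in the union of at most $k$ rows and $k$ columns, since from such a position a single further move clears the board and locates the robber.

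For the upper bound $\search_1(H(2,n)) \le \ceil{\frac{n+1}{3}}$ I would give an explicit sweep using $k = \ceil{\frac{n+1}{3}}$ cops in the regime $n \le 3k-1$. The idea is to advance a cleaned block across the grid in phases, assigning some cops to ``wall off'' the territory already cleaned -- holding the rows and columns along which recontamination would otherwise leak back -- while the remaining cops push the frontier of cleaned columns forward. The delicate point is that a cop changes only one coordinate per move, so shifting a guard from one cell of the diagonal to the next costs two moves, with a robber turn intervening; I would therefore need to choose the walls so that no recontamination escapes during these repositionings. The inequality $n \le 3k-1$ is precisely what lets the three competing demands -- holding the cleaned columns, holding the cleaned rows, and advancing the frontier -- all be serviced by $k$ cops, and this accounting is where the factor of $3$ comes from.

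For the lower bound $\search_1(H(2,n)) \ge \ceil{\frac{n+1}{3}}$ I would show that $k$ cops fail whenever $3k \le n$, by maintaining an adversarial invariant that keeps the dirty set uncoverable for all time: a product set (a full set of columns crossed with a full set of rows) that is too large to be enclosed by $k$ rows and $k$ columns. On each cop turn the seen region deletes at most $k$ rows and $k$ columns from this configuration, but the subsequent recontamination phase restores the threatened portion, preserving a configuration of the same combinatorial type. The main obstacle -- and the step I expect to absorb most of the work -- is making this tight at the boundary $n \equiv 0 \pmod 3$: a crude product-set count only defeats the cops for $n$ safely above $3k$, and pushing the argument down to $n = 3k$ seems to require exploiting the one-coordinate-per-move restriction directly, namely that a cop repositioning to threaten a new column must leave its row fixed (and conversely), so the cops can never re-cover in one turn both the rows and the columns that recontamination has just restored. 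Verifying that this obstruction survives for every $n$ with $3k \le n$, and then reconciling the two bounds through the ceiling, is the crux of the proof.
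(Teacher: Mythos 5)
Your plan follows essentially the same route as the paper, viewed from the complementary side: the paper tracks the clean set, which after every recontamination phase is exactly a rectangle $S_{a,b}$ (the intersection of the fully clean rows with the fully clean columns), while you track the dirty set via a product $A \times B$ that must stay uncoverable; these are the same invariant. Likewise, the refinement you single out as the crux of the lower bound -- that a cop who moves to threaten a new column necessarily keeps her row, and vice versa -- is precisely the paper's accounting: if $\ell$ of the $k$ cops move vertically in a turn, then at the end of that turn at most $k+\ell$ rows and at most $2k-\ell$ columns are fully clean, so the number of clean rows plus clean columns never exceeds $3k$, and when $3k \le n$ this persists forever. So the outline is sound, and when fleshed out it does go through: with that refinement your product invariant (one factor regrowing to $n-k$ full lines after each recontamination, the other shrinking by at most the number of newly entered lines) is maintainable for all $n \ge 3k$.

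As a proof, however, the proposal has two genuine gaps, and they sit exactly where the paper does its work. First, the upper bound contains no strategy: asserting that $n \le 3k-1$ ``lets the three competing demands be serviced by $k$ cops'' only says that the counting obstruction disappears; it does not produce a sweep compatible with one-coordinate-per-move motion. The paper's construction is the substantive content here: the cops start on a diagonal, build a clean $2k \times k$ rectangle, and then run a two-round cycle in which $n-2k$ cops sweep into the far rows while the remaining $3k-n$ cops sweep into the far columns, gaining exactly $3k-n \ge 1$ new clean columns per cycle. Note also that your ``walls'' picture misstates the mechanics: a rectangle vertex survives recontamination only if its \emph{entire} row and \emph{entire} column are clean at the end of the cop turn, so the rectangle is not held by stationary guards -- it survives because the sweep into the far rows re-cleans the outside portions of every rectangle column in passing. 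Second, the lower bound defers its own crux: you correctly identify the needed refinement but never carry out the maintenance argument (the paper's Cases 1 and 2) verifying that the invariant survives every possible split of the cops' moves, which is the actual proof. A minor further inaccuracy: from a coverable dirty set the cops cannot in general finish ``in a single further move,'' since they may be unable to reach the $k$ required intersection vertices in one step; fortunately, nothing in your argument leans on that direction.
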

\begin{proof}
The cases $n=1$ and $n=2$ are clear by inspection, so assume henceforth that $n \ge 3$.  

To simplify the presentation of the proof, we will view the process of ``recontamination'' slightly differently from usual.  In particular, during the recontamination phase of each round, we say that every vertex adjacent to a dirty vertex becomes recontaminated, \textit{even if that vertex is currently seen by a cop}.  Such vertices will, of course, immediately be seen (and thus cleaned) at the beginning of the next round, so this has no impact of the outcome of the game -- any winning cop strategy in the original formulation of the game is also a winning strategy in this new formulation, and vice-versa.

Let $k = \ceil{(n+1)/3}$.  To show that $c'_1(H(2,n)) \le k$, we give a strategy for $k$ cops to locate the robber.  Before presenting the cops' strategy, we make some useful observations.  First, note that during the recontamination phase of each round, if a row or column is not completely clean, then all vertices in that row or column become recontaminated.  Hence, it will be important to focus on how many rows and columns the cops have been able to fully clean in any given round.  For positive integers $a$ and $b$, define \ $S_{a,b} = \{(x,y) \in V(H(2,n)) \, \vert \, 1 \le x \le b \text{ and } 1 \le y \le a\}$.  Suppose that, immediately after a cop move, the cops have fully cleaned $a$  rows and $b$ columns.  By symmetry of $H(2,n)$, we may re-index rows and columns so that the fully cleaned rows are precisely rows 1 through $a$ and the fully cleaned columns are 1 through $b$.  Note also that every row or column that contains a cop is fully cleaned; thus, if the cops all occupy distinct rows and distinct columns (as will be the case in the cop strategy we will present), then we may further assume that the rows and columns have been re-indexed so that the cops occupy vertices $(1,1), \dots, (k,k)$.  We refer to this re-indexing process as \textit{normalizing} the game configuration.  After normalization and the subsequent recontamination phase, the set of clean vertices will be precisely $S_{a,b}$.  Broadly, the cops' strategy will be to clean a set of vertices of the form $S_{a,b}$, with the values of $a$ and $b$ gradually increasing over time.

\begin{figure}[h!]
    \centering
    \includegraphics[width=0.5\linewidth]{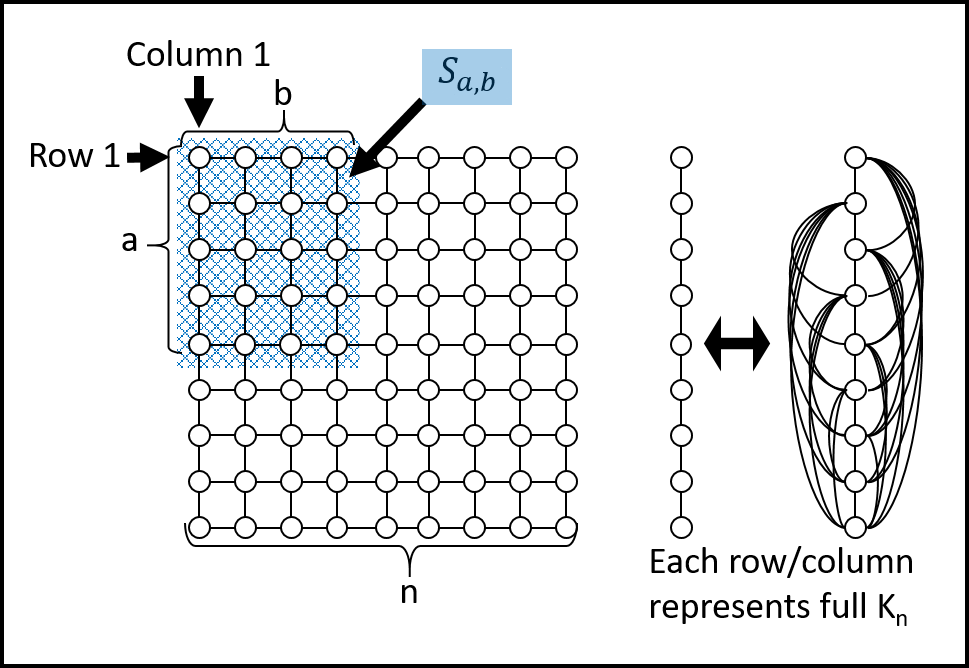}
    \caption{A depiction of $H(2,n)$}
    \label{fig:2d_hamming_legend}
\end{figure}

The cops begin the game on vertices $(1,1), (2,2), \dots, (k,k)$.  At the beginning of the cops' first turn, the cops can see all of rows $1$ through $k$ and columns $1$ through $k$.  If $n \le 2k$, then the cops can now move to see columns $k+1, \dots, n$, thereby fully cleaning the graph and winning the game, so suppose $n > 2k$.  The cops now move to vertices $(1,k+1), (2,k+2), \dots, (k,2k)$; after recontamination and normalization, at the start of the cops' next turn, the set of clean vertices is $S_{2k,k}$.

The cops now seek to gradually enlarge the set of clean vertices.  We claim that if, at the end of a round, the set of clean vertices is $S_{2k,c}$ for some positive integer $c$, then in the course of two rounds, the cops can either clean all vertices, or else cause the set of clean vertices to be $S_{2k,c'}$ where $c' > c$.  It would follow that by iterating this strategy, the cops can eventually clean the entire graph and thereby locate the robber.

Suppose that at the end of some round, the set of clean vertices is $S_{2k,c}$.  Let $\ell = n-2k$ and note that since $2k < n < 3k$ we must have $0 < \ell < k$.  On the cops' turn, the cops on vertices $(1,1), \dots, (\ell,\ell)$ move to vertices $(1,2k+1), \dots, (\ell, 2k+\ell) = (\ell,n)$; the remaining cops move from $(\ell+1, \ell+1), \dots, (k,k)$ to $(n-k+\ell+1,\ell+1), \dots, (n,k)$.  The set of clean vertices was initially $S_{2k,c}$ and the cops have now additionally cleaned rows $2k+1$ through $n$, so the first $c$ columns are fully clean; moreover, the cops have also cleaned the last $k-\ell$ columns.  If $c+k-\ell \ge n$, then the cops have cleaned all columns and have thus located the robber; otherwise, they have fully cleaned $c+k-\ell$ columns.  The cops occupied the first $k$ rows prior to moving and subsequently cleaned the last $\ell$; if $k+\ell \ge n$ then they have cleaned all rows and located the robber, and otherwise they have fully cleaned a total of $k+\ell$ rows.  Thus, either the cops have won, or the set of clean vertices after recontamination and normalization will be $S_{k+\ell,c+k-\ell}$, i.e. $S_{n-k, c+3k-n}$.  

In the next round, the cops move from $(1,1), \dots, (k,k)$ to $(1,n-k+1), \dots, (k,n)$.  This ensures that they fully clean the first $c+3k-n$ columns, the first $k$ rows, and the last $k$ rows, so either the cops have fully cleaned the graph, or the set of clean vertices after recontamination and normalization will be $S_{2k, c+3k-n}$.  (See Figure \ref{fig:dimension_2_visibility_1}.)  Since $3k > n$ we have $c+3k-n > c$, so the cops have successfully enlarged the set of clean vertices, as claimed; by repeating this strategy, the cops eventually locate the robber.

\begin{figure}[h!]
    \centering
    \includegraphics[width=1.0\linewidth]{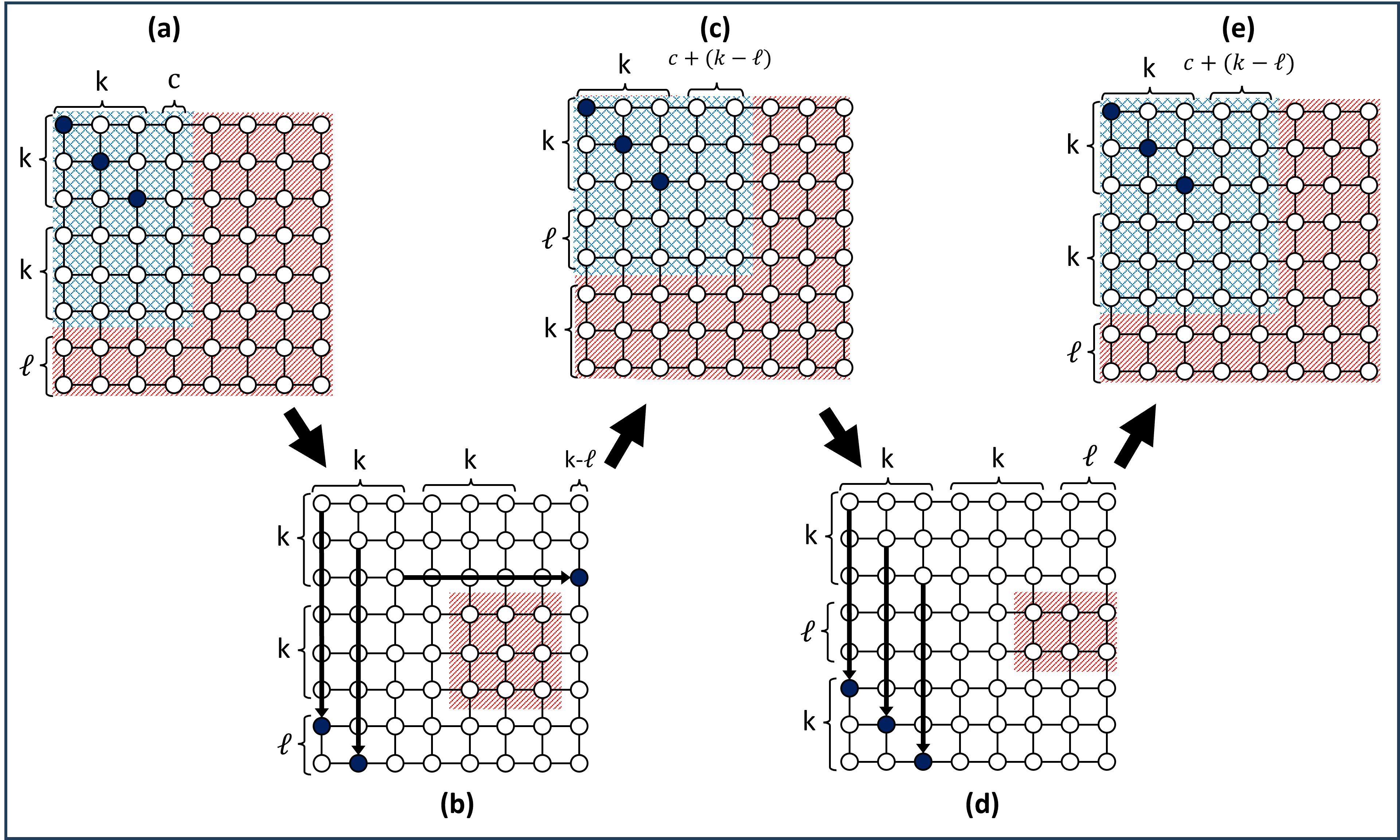}
    \caption{Example move to expand $S_{2k,c}$}
    \label{fig:dimension_2_visibility_1}
\end{figure}

We next establish the lower bound.  It suffices to show that $\floor{n/3}$ cops cannot locate the robber, so let $k = \floor{n/3}$ and consider the game with $k$ cops.  As noted above, we will make heavy use of the observation that during the recontamination phase of each round, when a row or column is not completely clean, all vertices in that row or column become recontaminated.  Hence, after each recontamination phase (and suitable re-indexing of the rows and columns), the set of clean vertices will be $S_{a,b}$ for some $a$ and $b$.

On the cops' first move, suppose that $\ell$ of the cops move vertically (i.e. move to new rows) for some $\ell$, and the remaining $k-\ell$ move horizontally or remain in place.  The cops cleaned at most $k$ different rows prior to moving and at most $\ell$ more afterward, so at most $k+\ell$ rows are clean at the end of the cops' turn.  Similarly, at most $2k-\ell$ columns are clean at the end of the cops' turn.  Thus, after recontamination and re-indexing of rows and columns, the set of clean vertices must be contained within the set $S_{k+\ell,2k-\ell}$.  We claim that at the end of every round for the remainder of the game, the set of clean vertices will have the form $S_{a,b}$ for positive integers $a$ and $b$ with $a \le 2k$, $b \le 2k$, and $a+b \le 3k$.  Since both $a$ and $b$ must always be strictly positive, the requirement that $a+b \le 3k \le n$ would imply that neither $a$ nor $b$ can exceed $n-1$; thus the cops can never fully clean the graph.  

Assume that at the end of some round, the set of clean vertices is $S_{a,b}$ where $a \le 2k$, $b \le 2k$, and $a+b \le 3k$.  Let $\ell$ denote the number of cops who move vertically in the cops' next turn.  We consider two cases.  

\textbf{Case 1}: $\ell \ge n-b$ or $k-\ell \ge n-a$.  By symmetry, we may suppose $\ell \ge n-b$.  The restrictions $n-b \le \ell \le k$ and $b \le 2k$ together imply 
$$3k \ge 2k+\ell \ge 2k+n-b \ge n.$$  
By choice of $k$ we have $3k \le n$, so we must in fact have equality throughout the chain of inequalities above; in particular, we have $n=3k$ and $\ell = k$, so in fact no cops move horizontally.  At the beginning of the round, the set of clean vertices was $S_{a,b}$, so the last $n-a$ rows were completely dirty and contained no cops.  Since no cops move horizontally, no new columns are seen as a result of the cops move.  Hence, if the cops do not move into all of the last $n-a$ rows, then the only columns that will be clean after the cops' move are those columns that contained cops at the start of the turn; there are at most $k$ of these.  Likewise, the only fully clean rows will be those rows that contained cops at the prior to the cops' move and those rows to which the cops subsequently moved; there are at most $2k$ of these.  Thus, after recontamination and re-indexing of rows and columns, the set of clean vertices will be contained in $S_{2k,k}$; it follows that the set of clean vertices will have the form $S_{a',b'}$ with $a' \le 2k$, $b' \le 2k$, and $a'+b' \le 3k$, as claimed.

\textbf{Case 2}: $\ell < n-b$ and $k-\ell < n-a$.  As in Case 1, at the beginning of the round, the last $n-a$ rows were completely dirty and contained no cops.  Since $\ell < n-a$, the cops cannot clean all of these rows.  Thus, as in Case 1, only two types of columns are clean after the cops' move:  columns that contained cops at the start of the turn and columns to which the cops have moved; there are at most $k+(k-\ell)$ such columns.  Similarly, since $k-\ell < n-b$, the only rows that are clean after the cops' move are the rows that contained cops at the start of the turn and the rows to which the cops have moved; there are at most $k+\ell$ of these.  Hence, after recontamination and re-indexing of rows and columns, the set of clean vertices will be contained within $S_{k+\ell,2k-\ell}$; it again follows that the set of clean vertices will have the form $S_{a',b'}$ with $a' \le 2k$, $b' \le 2k$, and $a'+b' \le 3k$, as claimed.

We have thus established the claim that at the end of each round the set of clean vertices will be $S_{a,b}$ where $a+b \le 3k$, hence the cops can never locate the robber.
\end{proof}

As noted above, Neufeld and Nowakowski \cite{NN98} showed that $c(H(2,n)) = 2$, so in light of Theorem \ref{thm:clique_product_search}, one might expect that it is typically much more difficult to see the robber on $H(2,n)$ than to capture him.  Quite surprisingly, this is not the case: capturing a robber in the 1-visibility game is substantially more difficult than seeing him, as we next show.  

Our proof of this fact will employ a useful result from \cite{CCDDFM20}.  Given a graph $G$ and a subgraph $H$ of $G$, a \textit{retraction} from $G$ to $H$ is a homomorphism from $G$ to $H$ that maps each vertex of $H$ to itself; that is, it is a map $\phi: V(G) \rightarrow V(H)$ such that $uv \in E(G) \Rightarrow \phi(u)\phi(v) \in E(H)$ and $\phi(v) = v$ for all $v \in V(H)$. We say that $H$ is a \textit{retract} of $G$ if there exists a retraction from $G$ to $H$.   Clarke et al. showed that -- as is the case with many variants of Cops and Robbers -- both $\lloc$ and $\lcap$ are monotone with respect to retraction:

\begin{theorem}[\cite{CCDDFM20}, Theorem 2.1]\label{thm:monotone_retract}
For any $\ell \ge 1$, any graph $G$, and any retract $H$ of $G$, we have $\lloc(H) \le \lloc(G)$ and $\lcap(H) \le \lcap(G)$.
\end{theorem}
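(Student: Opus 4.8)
The plan is to prove both inequalities by a single shadow-strategy (``strategy-stealing'') argument: starting from a winning strategy for $k$ cops on $G$, I will build a winning strategy for $k$ cops on $H$ in which the real cops play on $H$ but secretly simulate a game on $G$, projecting the simulated cops' moves back down to $H$ through the retraction. Fix a retraction $\phi : V(G) \to V(H)$. Since $H$ is a subgraph of $G$ and $\phi(v) = v$ for every $v \in V(H)$, the geometric fact I will lean on is that $\phi$ does not increase distances to vertices of $H$: for every $u \in V(G)$ and $v \in V(H)$ we have $d_H(\phi(u), v) \le d_G(u, v)$. This follows by applying $\phi$ to a shortest $u$--$v$ path in $G$; as $\phi$ is a homomorphism, the image is a walk in $H$ of no greater length running from $\phi(u)$ to $\phi(v) = v$.

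Assume, then, that $k$ cops have a winning locating (resp.\ capturing) strategy on $G$. The $H$-cops will maintain in their heads a simulated $G$-game with simulated cop positions $c_1, \dots, c_k \in V(G)$, treating the true robber---whose position $r$ always lies in $V(H) \subseteq V(G)$---as a simulated robber on $G$. Whenever the $G$-strategy moves a simulated cop from $c_i$ to $c_i'$, the real cop moves from $\phi(c_i)$ to $\phi(c_i')$; this is legal in $H$, because $c_i = c_i'$ or $c_i c_i' \in E(G)$ forces $\phi(c_i) = \phi(c_i')$ or $\phi(c_i)\phi(c_i') \in E(H)$. The distance inequality transfers the cops' progress from the simulation to reality: if a simulated cop sees the robber in $G$, i.e.\ $d_G(c_i, r) \le \ell$, then the real cop sees him in $H$, since $d_H(\phi(c_i), r) \le d_G(c_i, r) \le \ell$; and if a simulated cop captures the robber, i.e.\ $c_i = r$, then $\phi(c_i) = \phi(r) = r$, so the real cop captures him too. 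Because the real robber moves along edges of $H$, which are edges of $G$ as well, his trajectory is a legal robber trajectory in the simulated $G$-game, so the $G$-strategy is guaranteed to locate (resp.\ capture) him there within finitely many rounds---at which moment the real cops have located (resp.\ captured) him on $H$.

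The step that will require genuine care---and which I expect to be the main obstacle---is verifying that the $H$-cops can actually run the $G$-strategy using only the information legitimately available in the $H$-game, rather than phantom knowledge of the robber. Since the $G$-strategy branches on the history of $G$-sightings, the cops must reconstruct at each round exactly which simulated cops see the robber in $G$, and where he is. This is possible precisely because a $G$-sighting forces an $H$-sighting: if $d_G(c_i, r) \le \ell$, then $d_H(\phi(c_i), r) \le \ell$, so the real cops do see $r$, know his exact position, and can recompute every simulated $G$-distance to feed correct sighting data into the $G$-strategy. When no $G$-sighting occurs, the cops can still detect this: either they see nothing in $H$ (in which case no $G$-sighting is possible), or they see the robber in $H$ but verify by direct computation that his $G$-distance to every simulated cop exceeds $\ell$, in which case they simply ignore the surplus visibility and proceed. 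Thus the simulated $G$-game is faithfully reconstructible from $H$-information, the $H$-strategy is well defined, and both $\lloc(H) \le \lloc(G)$ and $\lcap(H) \le \lcap(G)$ follow. For the locating bound in isolation, one could sidestep this bookkeeping by working in the equivalent cleaning formulation, maintaining the invariant that the dirty set in $H$ is contained in the dirty set in $G$ and checking that each cleaning and recontamination step in $H$ is dominated by its counterpart in $G$.
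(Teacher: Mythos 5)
Your shadow-strategy argument is correct, and it is essentially the standard proof of this result: the paper itself states Theorem \ref{thm:monotone_retract} without proof, importing it from Clarke et al.\ \cite{CCDDFM20}, whose proof is exactly this projection-via-retraction argument (simulate a winning $G$-strategy, play the $\phi$-images in $H$, and use the facts that $\phi$ does not increase distances to vertices of $H$ and that a $G$-sighting forces an $H$-sighting, so the simulated information history is faithfully reconstructible). You also correctly handle the one point needing care, namely that $\phi$ may identify adjacent vertices (as the paper's own retractions in Theorem \ref{thm:clique_product_capture} and Lemma \ref{lem:diff_at_most_one} do), so a simulated cop move may project to a real cop standing still.
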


We are now ready to determine $c_1(H(2,n))$.

\begin{theorem}\label{thm:clique_product_capture}
For all $n \ge 2$, we have $c_1(H(2,n)) = \ceil{\frac{n+1}{2}}$.
\end{theorem}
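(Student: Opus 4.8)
The plan is to prove the two bounds $c_1(H(2,n)) \ge \lceil(n+1)/2\rceil$ and $c_1(H(2,n)) \le \lceil(n+1)/2\rceil$ separately, writing $k := \lceil(n+1)/2\rceil = \lfloor n/2\rfloor + 1$ throughout. Everything rests on one observation specific to $\ell = 1$: a cop at $(a,b)$ sees exactly the vertices of row $b$ and column $a$, so a cop can capture the robber on its next move precisely when it already sees him. Hence at the start of any cop turn the robber is safe if and only if his vertex lies in no cop's row or column (call such a vertex \emph{free}); and, since every robber move keeps him in his current row or column, a free robber becomes trapped only when, after the cops' move, both his row and his column contain a cop.

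For the lower bound I would exhibit an evasion strategy against $\lfloor n/2\rfloor$ cops. The robber maintains the invariant that at the start of every cop turn his vertex is free, which alone guarantees he is never captured. The key point is that a free robber is also unseen, so a robber who never leaves a free vertex reveals nothing; against such a robber the cops' moves form a predetermined sequence that the (effectively omniscient) robber may anticipate. It therefore suffices to show that, knowing the whole sequence of configurations, the robber can walk forever through free vertices. From a free vertex $(x,y)$ he is blocked at the next step only if both row $y$ and column $x$ are occupied in the cops' next configuration, so he keeps one move of look-ahead to avoid such vertices. The combinatorial heart is that each cop alters only one coordinate per move, so with $\lfloor n/2\rfloor$ cops it is impossible that no row is free at two consecutive times \emph{and} no column is free at two consecutive times. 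From this I would deduce that the robber can always step to a free vertex that is again not about to be doubly threatened, so the invariant persists and capture never occurs.

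For the upper bound I would give a strategy for $k$ cops. Since $c(H(2,n)) = 2$, once two cops occupy the robber's row and his column while he is visible, the classical two-cop cornering captures him; moreover two cops can \emph{maintain} such a doubly-threatened configuration, because after any robber move one of the two still shares a line with him (keeping him visible) while the other repositions in a single step onto his new row or column. The entire task therefore reduces to forcing the doubly-threatened configuration starting from not seeing the robber. I would accomplish this with a sweeping argument parallel to the upper bound of Theorem \ref{thm:clique_product_search}, using the cops to clean and shrink the robber's feasible region until he is confined to a single line patrolled by a cop, at which point a spare cop is moved onto his row to complete the double threat. The inequality $2k > n$ is exactly what makes this possible: it is the threshold, complementary to the lower bound, at which the cops can cover the robber's row and column at once.

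I expect the transition from searching to capturing to be the main obstacle, and indeed this is why the answer exceeds the search number $\lceil(n+1)/3\rceil$ and why Theorem \ref{thm:seeing_plus_one} fails for $\ell = 1$. A single sighting does not suffice: if the cops see the robber only through a cop in his row, he can bolt along his column and vanish before a second cop reaches that column, since the cops cannot anticipate which row of the column he flees to. Thus the cops must arrange to see the robber through a cop in his row \emph{and} a cop in his column simultaneously before he can escape, and the delicate part of the argument is showing that $k$ cops can always force this configuration while preventing recontamination of the region already cleaned.
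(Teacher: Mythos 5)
Your framing observation is correct and matches the heart of the paper's proof: on $H(2,n)$ with $\ell=1$, a cop sees the robber exactly when she is one rook move from capturing him, so the robber survives a cop turn if and only if his vertex begins that turn sharing no row or column with any cop. Your lower bound is essentially the paper's argument (a free-vertex invariant, omniscience supplying one move of look-ahead, and the fact that each cop changes only one coordinate per move), but your stated combinatorial core is garbled: with $\floor{n/2}$ cops, at least half the rows and half the columns are free at \emph{every} instant, so ``no row is free at two consecutive times'' never occurs and the claim as written yields nothing. The counting you actually need, and which the paper carries out, is local to the robber: suppose his row is cop-free at the start of his turn. If no cop intends to enter his row, he moves within it to any currently cop-free column; if some cop does intend to enter his row, that cop moves vertically, so at most $k-1$ cops change column, and at least one of the $n-k \ge k$ currently cop-free columns remains cop-free after the cops' move -- the robber steps \emph{within his row} into such a column. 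This respects the adjacency constraint (which your sketch never checks: the robber cannot jump to an arbitrary free vertex) and re-establishes the invariant. The paper runs this for even $n$ and handles odd $n$ by retraction via Theorem \ref{thm:monotone_retract}; your uniform direct version would also work once the counting is repaired.

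The genuine gap is in your upper bound. Your endgame (the doubly-threatened robber stays seen and is captured) is fine, but the reduction to it -- ``clean and shrink the robber's feasible region until he is confined to a single line'' via a sweep ``parallel to Theorem \ref{thm:clique_product_search}'' -- is exactly the step that fails, as your own final paragraph half-concedes. The cleaning strategy of Theorem \ref{thm:clique_product_search} terminates in a \emph{sighting}, not a confinement: the robber is seen at the end of a cop turn, then dodges along his unseen line, and nothing in that strategy's diagonal arrangement places a second cop to meet the dodge; the region he flees into recontaminates and this can repeat indefinitely, which is precisely why $c_1 > c'_1$. The paper's strategy is structurally different: for $n = 2k+1$ it stacks all $k+1$ cops in a single column covering rows $1,\dots,k+1$, then shifts the stack to cover rows $k+1,\dots,2k+1$, flushing the robber within two cop turns; the key point is that his only non-fatal dodge is within his column into rows $1,\dots,k$, whereupon one cop takes his column while the rest drop back to cover rows $1,\dots,k$, so every subsequent robber move leaves him seen at the start of a cop turn, i.e., captured. (Even $n$ is reduced to this by parking one cop at a corner to guard its row and column.) The inequality $k+1 > n/2$ is used to build a wall that scans all rows in two moves and pins the dodge, not to ``cover the robber's row and column at once.'' Without an explicit mechanism of this kind, your proposal contains no argument that $\ceil{(n+1)/2}$ cops can ever convert a sighting into the doubly-threatened position, so the upper bound is unproven at its crux.
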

\begin{proof}
The case $n = 2$ is clear by inspection, so suppose $n \ge 3$.  

We begin by arguing that $c_1(H(2,n)) \le \ceil{\frac{n+1}{2}}$.  Suppose first that $n$ is odd, and let $n=2k+1$ for some $k$; we give a strategy for $k+1$ cops to capture the robber.  Initially, the cops begin the game on vertices $(1,1), (1,2), \dots, (1,k+1)$.  At the beginning of the cops' next turn, if the robber is in rows 1 through $k+1$ (or in column 1), then the cops can capture him immediately, so suppose otherwise.  The cops move to vertices $(1,k+1), (1,k+2), \dots, (1,2k+1)$.  Since the robber was in one of the last $k$ rows, some cop sees him; by symmetry, we may suppose the robber was seen on vertex $(2k+1,2k+1)$.

On the robber's turn, the robber cannot stay in row $2k+1$, lest he be captured by the cop on $(1,2k+1)$.  Similarly, he cannot move to $(i,2k+1)$ for any $i \in \{k+1,\dots, 2k\}$.  Hence, the robber must move to vertex $(i,2k+1)$ for some $i \in \{1, \dots, k\}$.  The cop on $(1,2k+1)$ now moves to $(2k+1,2k+1)$, while the cops on $(1,k+1), \dots, (1,2k)$ move to $(1,1), \dots, (1,k)$.  After the cops' move, the cop on $(2k+1,2k+1)$ will be in the same column as the robber, while one of the other cops will be in the same row as the robber.  Hence, wherever the robber moves on his next turn, he will still be in the same row or column as some cop.  Consequently, some cop will see the robber at the beginning of the cops' next turn and can subsequently capture him.

Now suppose that $n$ is even, and let $n = 2k+2$ for some $k$; we claim that $k+2$ cops can capture the robber.  One cop begins the game on $(2k+2,2k+2)$.  If the robber ever moves to row $2k+2$ or column $2k+2$, then that cop can capture him.  Otherwise, the cop remains on $(2k+2,2k+2)$ for the remainder of the game.  Since the robber must stay within the first $2k+1$ rows and $2k+1$ columns, the remaining $k+1$ cops can play a winning strategy to capture the robber on $K_{2k+1}\cart K_{2k+1}$ as detailed above.

We will next show that $c_1(H(2,n)) \ge \ceil{\frac{n+1}{2}}$.  Suppose first that $n$ is even, and let $n=2k$ for some $k$; we will give a strategy for the robber to evade $k$ cops.  We claim that if, at the beginning of a robber turn, the robber's current row or current column (or both) has no cops, then the robber can move so as to guarantee that after the next cop turn, the robber's new row or new column (or both) will have no cops.  In particular, this implies that the robber will not have been captured; hence the robber can repeat this strategy perpetually, thereby forever evading capture.

So, suppose that it is the robber's turn and that the robber's row or column (or both) has no cops.  By symmetry, we may suppose that the robber's row has no cops.  We may also assume, by symmetry, that the cops are all within the first $k$ rows and the first $k$ columns of the graph.  
Recall that to win, the cops are required to capture the robber with probability 1 -- i.e. they cannot ``get lucky'' -- so we may suppose that the robber knows, in advance, to which vertices the cops intend to move.  If no cop intends to move to the robber's row, then he moves to any vertex of his row in the last $k$ columns; this ensures that at the beginning of the next robber turn, the robber's row will have no cops.  On the other hand, if some cop does intend to move to the robber's row, then that cop must move vertically.  Consequently, at most $k-1$ cops may move horizontally, so after the cops move, at most $k-1$ of the last $k$ columns will contain cops.  The robber moves to any column that will not contain a cop, thereby ensuring that at the beginning of the robber's next turn, his column will not contain a cop.  This completes the proof of the lower bound when $n$ is even.

Suppose now that $n$ is odd, and let $n=2k+1$ for some $k$.  It is easily seen that $K_{2k} \cart K_{2k}$ is a retract of $K_{2k+1} \cart K_{2k+1}$ under the map $\phi : V(K_{2k+1} \cart K_{2k+1}) \rightarrow V(K_{2k} \cart K_{2k})$ defined by $\phi((x,y)) = (\min\{x,2k\}, \min\{y,2k\})$.  Thus, by Theorem \ref{thm:monotone_retract} and the argument above, we have $c_1(K_{2k+1} \cart K_{2k+1}) \ge c_1(K_{2k} \cart K_{2k}) = k$.
\end{proof}

Next, we return to the problem of determining $c_0(H(2,n))$.  In the 0-visibility game, the cops capture the robber when, and only when, they see him; hence $c_0(G) = c'_0(G)$ for all $G$.  We actually determine $\weakmonoloc_0(H(2,n))$ in addition to $c'_0(H(2,n))$, because the optimal search strategy for the cops can be carried out in a weakly monotonic manner.

\begin{theorem}\label{thm:dimension_2_visibility_0}
For all positive integers $n$, we have $c'_0(H(2,n)) = \weakmonoloc_0(H(2,n)) = \ceil{\frac{n^2+n}{4}}$.
\end{theorem}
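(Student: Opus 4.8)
The plan is to establish the two inequalities $c'_0(H(2,n)) \ge \ceil{\frac{n^2+n}{4}}$ and $\weakmonoloc_0(H(2,n)) \le \ceil{\frac{n^2+n}{4}}$. Since capturing coincides with seeing when $\ell=0$ we have $c_0 = c'_0$, and since weak monotonicity is a restriction on the cops we have $c'_0 \le \weakmonoloc_0$; thus these two bounds pin all three quantities to the common value. Throughout I would track the \emph{dirty set} (the possible-location set of the robber) and exploit two features of the rook's graph. First, during a recontamination phase a single dirty cell floods its entire row and column, so a clean cell survives a robber turn precisely when it is occupied by a cop or its whole row and whole column are clean; I call the latter cells \emph{interior}, and for a clean set $C$ I write $r(C),q(C)$ for the numbers of fully clean rows and columns, so that the interior has exactly $r(C)q(C)$ cells while $|C| \ge n\,r(C) + n\,q(C) - r(C)q(C)$ (counting the full clean rows and columns via inclusion--exclusion). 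Second, a cell cleaned on a cop turn stays clean for the rest of that turn, so two consecutive cop turns can hold $2k$ cells clean at once.

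For the lower bound, let $C_i$ be the clean set just after the $i$-th cop turn, with cop positions $P_i$, and write $r_i=r(C_i)$, $q_i=q(C_i)$. Every cell of $C_{i+1}$ is either interior to $C_i$ (the only cells surviving the intervening robber turn without a cop) or lies in $P_i\cup P_{i+1}$, so $|C_{i+1}| \le r_iq_i + 2k$; combining this with the cross lower bound on $|C_{i+1}|$ gives the central inequality
\[ n(r_{i+1}+q_{i+1}) - r_{i+1}q_{i+1} \;\le\; r_iq_i + 2k. \]
We may assume $k < n^2/2$, as otherwise the desired bound is immediate. Then let $i^*$ be the first round with $r_{i^*}+q_{i^*}\ge n$; this exists, since at the start only the $k$ cop cells are clean (forcing $r_0+q_0 \le 2k/n < n$) while at capture $r=q=n$. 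Writing $g(r,q)=n(r+q)-rq$, which is nondecreasing in each variable on $[0,n]^2$ because $n-q,n-r\ge 0$, I apply the inequality at $i=i^*-1$ and bound $g(r_{i^*},q_{i^*}) \ge n^2 - \floor{n/2}\ceil{n/2}$ (its minimum over $r+q\ge n$) and $r_{i^*-1}q_{i^*-1} \le \floor{(n-1)/2}\ceil{(n-1)/2}$ (the maximum product over $r+q\le n-1$). The resulting estimate on $2k$ simplifies, in both parities of $n$, to $2\ceil{\frac{n^2+n}{4}}$, yielding $k \ge \ceil{\frac{n^2+n}{4}}$. Crucially this argument uses no monotonicity assumption, so it bounds $c'_0$ (hence $c_0$).

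For the upper bound I would give an explicit weakly monotone sweep that grows the interior subgrid along the balanced diagonal $r\approx q$, adding one clean row or column at a time. The point is that the displayed inequality is essentially tight: to pass from interior $r\times q$ to the cross of an $(r{+}1)\times q$ (or $r\times(q{+}1)$) block, the cops need only cover the \emph{annulus} of new cross cells not already interior, and because the previous turn's cells remain clean through the current cop turn, this annulus can be swept using two consecutive turns' worth of positions, i.e.\ $2k$ cells. A one-variable optimization shows the annulus is largest when $r=q\approx n/2$, where it has $n^2/2 + n/2$ cells, so $\ceil{\frac{n^2+n}{4}}$ cops suffice at the bottleneck; one then checks that every recontaminated cell is recleaned on the following cop turn, so the strategy is weakly monotone.

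The main obstacle is the upper bound. The lower bound is clean and self-contained (modulo the two elementary extremal estimates for $g$ and for the product $rq$). The real work lies in specifying the exact cop motions realizing the two-turn covering of the annulus at every stage, verifying that no contamination leaks past the advancing frontier, and confirming both weak monotonicity and the precise peak count—including the parity bookkeeping that produces the ceiling rather than merely $\frac{n^2+n}{4}$.
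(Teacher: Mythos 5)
Your overall architecture --- a strategy-free lower bound on $\search_0$, a weakly monotone upper bound on $\weakmonoloc_0$, and the chain $\search_0 \le \weakmonoloc_0$ pinning both to $\ceil{\frac{n^2+n}{4}}$ --- matches the paper's, but the two halves of your proposal merit different verdicts.

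Your lower bound is correct and genuinely cleaner than the paper's. Both arguments run the same count (each cop contributes at most two seen cells per round, and the expensive moment is when the fully clean rows and columns first satisfy $r+q\ge n$), but the paper reaches that moment through several informal reductions: any strategy with fewer than $n^2/2$ cops must build a secure set, expansions may be assumed to add one row or one column at a time, and expanding in two directions at once cannot help. Your exact identity $C_{i+1}=\mathrm{int}(C_i)\cup P_i\cup P_{i+1}$ --- valid since at $\ell=0$ a clean cell survives a robber turn precisely when it is occupied or its whole row and column are clean --- compresses all of that into
\[
n(r_{i+1}+q_{i+1})-r_{i+1}q_{i+1}\;\le\;r_iq_i+2k,
\]
an inequality that holds for \emph{every} cop strategy, monotone or not, and evaluating it at the first crossing of $r+q\ge n$ gives the bound. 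Two small repairs: the parenthetical $r_0+q_0\le 2k/n$ is not immediate (what you need, and what $k<n^2/2$ gives, is that $r_0+q_0\ge n$ would force $k\ge n^2-\floor{n/2}\ceil{n/2}\ge\frac{3}{4}n^2$); and your estimate on $2k$ is sometimes one less than $2\ceil{\frac{n^2+n}{4}}$ (e.g.\ when $n\equiv 2 \pmod 4$), so it is integrality of $k$, not algebraic simplification, that closes the gap. Neither affects correctness.

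The genuine gap is the upper bound, and it is not a small one: what you have is a counting certificate, not a strategy. Knowing that the cells to be held clean at the bottleneck number about $\frac{n^2+n}{2}$, and that two consecutive turns supply $2k$ cop-cells, does not show that $k$ cops, each moving along one edge per turn, can realize such a covering in \emph{every} round. Weak monotonicity demands that each vulnerable cell (clean, with an undiscovered neighbor) be occupied either immediately before or immediately after every cop move --- in steady state between expansions, not only during them --- and the cops also need unboundedly many preparation rounds to migrate into position for the next expansion without contamination leaking past the frontier. Constructing such a perpetual covering is precisely what most of the paper's proof does: maintenance cops cycle through checkerboard patterns in the upper and lower vulnerable sets so that every vulnerable cell is occupied every other turn (plus a stationary cop when both relevant dimensions are odd), free cops travel through secured territory to become expansion cops, and the expansion itself is choreographed in three parity cases; only then does the peak count $\ceil{\frac{(a+1)(n-a)}{2}}+\ceil{\frac{a(n-a)}{2}}\le\ceil{\frac{n^2+n}{4}}$ certify sufficiency. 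You identified the right invariant ($r\approx q$) and the right bottleneck value, and you flagged the missing construction yourself --- but until it is supplied, $\weakmonoloc_0(H(2,n))\le\ceil{\frac{n^2+n}{4}}$ is asserted rather than proven.
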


\begin{proof}
The claim is clear by inspection when $n \le 2$, so assume $n \ge 3$.  Let $G = H(2,n)$.  Since $c'_0(G) \le \weakmonoloc_0(G)$, it suffices to prove that $\weakmonoloc_0(G) \le \ceil{\frac{n^2+n}{4}}$ and $c'_0(G) \ge \ceil{\frac{n^2+n}{4}}$.  

Throughout this proof we will consider each vertex to be in one of three states:
\begin{itemize}
\item a vertex is \textit{undiscovered} if it has not yet been cleaned at any point during the game;  
\item a vertex is \textit{vulnerable} if it has been cleaned but has at least one undiscovered neighbor; and
\item a vertex is \textit{secure} if it has been cleaned and has no undiscovered neighbors.
\end{itemize}
Note that there is a subtle distinction between ``dirty'' and ``undiscovered'' vertices: an undiscovered vertex is and always has been dirty, but a vulnerable vertex may briefly become dirty without violating weak monotonicity of the cops' strategy, provided that the vertex is cleaned on the ensuing cop turn.  
In any weakly monotonic cop strategy, if a vertex is vulnerable at the end of some round $k$, then it must be seen at some point during round $k+1$: either it is seen at the end of round $k$ (and hence prior to the cop move in round $k+1$), or it becomes recontaminated in round $k$ and must be seen after the cop move in round $k+1$ to maintain weak monotonicity.  In addition, secure vertices can never again be recontaminated, since all of their neighbors are either secure or vulnerable (and, in either case, are clean immediately prior to each recontamination phase).  Thus secure vertices remain clean for the remainder of the game, while vulnerable vertices remain vulnerable until they eventually become secure.

To establish the upper bound on $\weakmonoloc_0(G)$, we present a weakly monotonic strategy for $\ceil{\frac{n^2+n}{4}}$ cops to search $G$.  The cops' strategy will call for many cops to focus on keeping vulnerable vertices clean; we call such cops \textit{maintenance} cops.  Cops that are not currently serving as maintenance cops are referred to as \textit{free}.  Loosely, the cops' strategy will ensure that at all times, the movements of the maintenance cops suffice to maintain weak monotonicity, while the free cops may move however they like among the secure and vulnerable vertices.

We envision the vertices of $G$ as being arranged into a $n\times n$ square where each row and each column corresponds to a copy of $K_n$.  We consider the ``top'' row and the ``leftmost'' column to be the first row and first column respectively, and the bottom row and rightmost column are the $n^{th}$ row and $n^{th}$ column respectively.  Note that a vertex is secure if and only if each vertex in the same row or column has been cleaned at some point.  Thus, by re-indexing the rows and columns if necessary, the set of secure vertices can always be viewed as the intersection between the first $a$ rows and and the first $b$ columns of $G$, for some $a$ and $b$.  In other words, we view the secure set as being in the ``upper-left corner'' of $G$.

We are now ready to explain the cops' strategy.  Loosely speaking, the cops will establish a set of secure vertices, initially consisting of a single vertex in the upper-left corner.  The cops will gradually expand the secure set until it contains all vertices in the graph, at which point $G$ has been fully cleaned and the cops have necessarily seen the robber.  We distinguish between two types of cop move: \textit{expansion moves} are those on which the secure set actually expands, while \textit{transition moves} are moves the cops make to prepare for the next expansion move.  An expansion move actually consists of the cop moves in two or three consecutive rounds, as it takes more than one round for the cops to expand the secure set while maintaining weak monotonicity.  If the secure set is of size $a \times b$ prior to the expansion move, then it will be of size $(a+1) \times b$ or $a \times (b+1)$ after the move; that is, each expansion move expands the secure set by a single row or column.  Whenever $a = b$, the next expansion move expands the secure set to size $(a+1)\times b$; otherwise, it expands the secure set to size $a \times (b+1)$.  

To begin the game, the cops occupy vertices $1, 3, 5, \dots$ of the first column and vertices $2, 4, 6, \dots$ of the first row of $G$.  Note that this initial placement requires $n$ cops, all of which are considered to be maintenance cops; all remaining cops place themselves on vertex $(1,1)$ and are considered to be free cops.  On the first turn, the maintenance cops will move to cover every vertex on the first row and column that was not previously covered.  After this cop move, vertex $(1,1)$ is secure, all other vertices in the first row and first column are vulnerable, and all remaining vertices are undiscovered.

By definition, all non-secure neighbors of secure vertices are necessarily vulnerable; throughout the game, the cops will play so as to ensure that these are the only vulnerable vertices.  We refer to the $a \times (n-b)$ rectangle of vulnerable vertices that share a row with a secure vertex the \textit{upper vulnerable set}, while the $b \times (n-a)$ rectangle of vulnerable vertices that share a column with a secure vertex is the \textit{lower vulnerable set}; see Figure \ref{fig:KnxKn_weak}.
     
\begin{figure}[ht]
    \centering
    \includegraphics[width=0.5\linewidth]{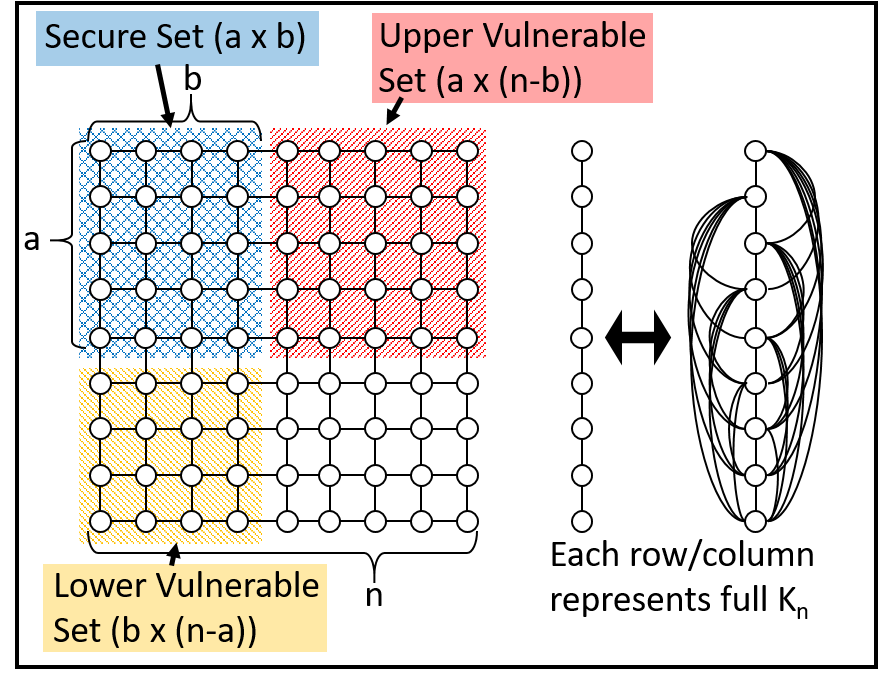}
    \caption{Secure and vulnerable sets}
    \label{fig:KnxKn_weak}
\end{figure}

We next describe how the maintenance cops play during transition moves so as to maintain weak monotonicity.  (Recall that the cops must see every vulnerable vertex in every round, either before or after the cop move.) The positioning and movement of cops in each vulnerable set will depend on the parity of the dimensions of the vulnerable set.  Suppose the secure set is of size $a \times b$, let $s^+ = \ceil{\frac{a}{2}}$, let $s^- = \floor{\frac{a}{2}}$, and let $t = \floor{\frac{n-b}{2}}$.  The cops will form a ``checkerboard pattern'' in the upper vulnerable set, as follows.

Initially, cops occupy columns $n-2t+1, n-2t+3, \dots, n-1$ of rows $1, 3, \dots, 2s^+-1$ and columns $n-2t+2, n-2t+4, \dots, n$ of rows $2, 4, \dots, 2s^-$.  In each round, each of these cops will move from a vertex $(x,y)$ to vertex $(x+1, y)$ if $x < n$ and to vertex $(n-2t+1,y)$ if $x = n$.  (See Figure \ref{fig:UVS_Main1}.)  Note that in doing so, the cops ensure that, during each round, they see every vertex in the last $2t$ columns of the upper vulnerable set during each round.  If $n-b$ is odd, then the cops must also ensure that they see every vertex of the first column of the upper vulnerable set, i.e. column $b+1$ of $G$.  To do this, cops initially occupy vertices $2, 4, \dots, 2s^-$ of column $b+1$; on each turn the cop on vertex $(b+1,y)$ moves to vertex $(b+1,y+1)$ if $y < 2s^-$ and to vertex $(b+1,1)$ if $y = 2s^-$.  (See Figure \ref{fig:UVS_Main2}.)  Additionally, if $a$ is odd, then one cop occupies and remains at vertex $(b+1,a)$, as shown in Figure \ref{fig:UVS_Main3}.  

    \begin{figure}[ht]
     \centering
     \begin{subfigure}[b]{0.3\textwidth}
         \centering
         \includegraphics[width=\textwidth]{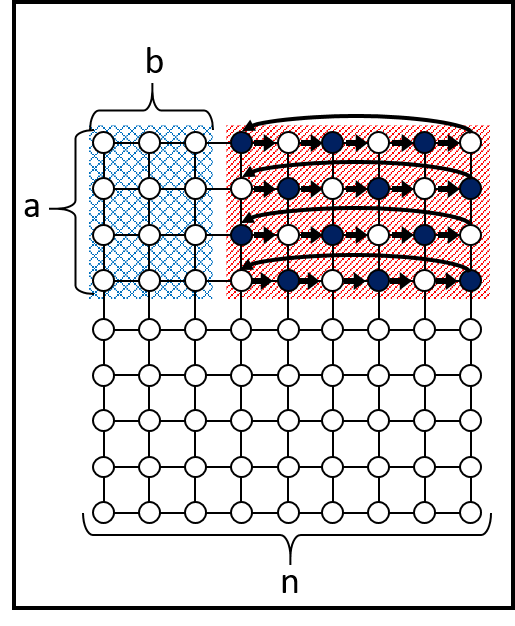}
         \caption{$(n-b)$ is even}
         \label{fig:UVS_Main1}
     \end{subfigure}
     \hfill
     \begin{subfigure}[b]{0.3\textwidth}
         \centering
         \includegraphics[width=\textwidth]{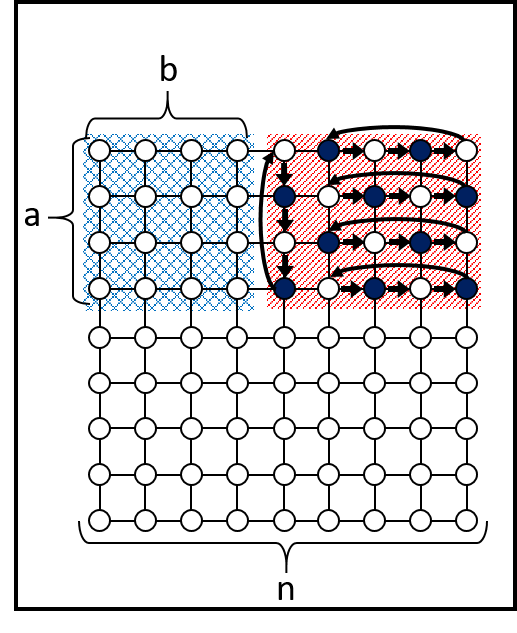}
         \caption{$(n-b)$ is odd; $a$ is even}
         \label{fig:UVS_Main2}
     \end{subfigure}
     \hfill
     \begin{subfigure}[b]{0.3\textwidth}
         \centering
         \includegraphics[width=\textwidth]{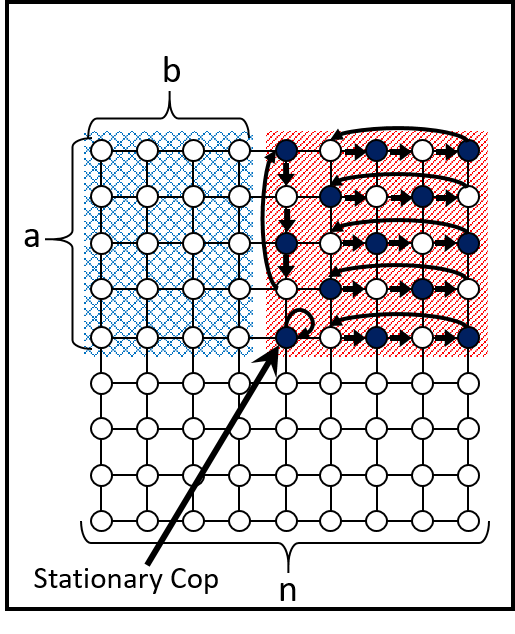}
         \caption{$(n-b)$ is odd; $a$ is odd}
         \label{fig:UVS_Main3}
     \end{subfigure}
     \hfill
        \caption{Cops' strategy for maintaining upper vulnerable set}
        \label{fig:UVS_Main}
\end{figure}

Additional maintenance cops use a symmetric strategy to patrol the lower vulnerable set, forming and moving in a checkerboard pattern, with a lone stationary cop in the upper-right corner if both $b$ and $n-a$ are odd; see Figure \ref{fig:LVS_Main}.

 \begin{figure}[ht]
     \centering
     \begin{subfigure}[b]{0.3\textwidth}
         \centering
         \includegraphics[width=\textwidth]{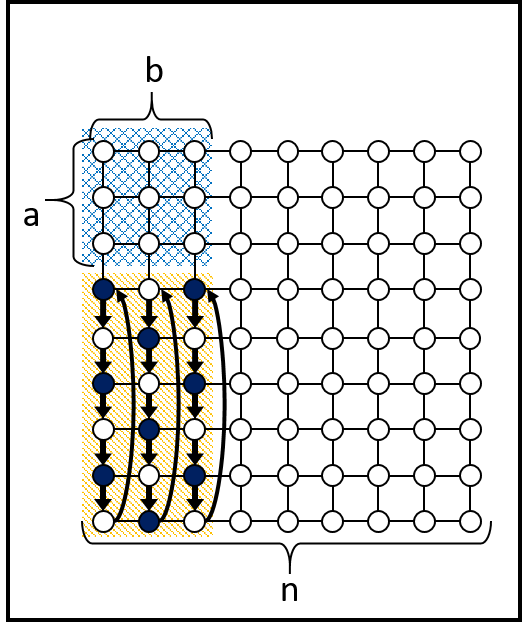}
         \caption{$(n-a)$ is even}
         \label{fig:LVS_Main1}
     \end{subfigure}
     \hfill
     \begin{subfigure}[b]{0.3\textwidth}
         \centering
         \includegraphics[width=\textwidth]{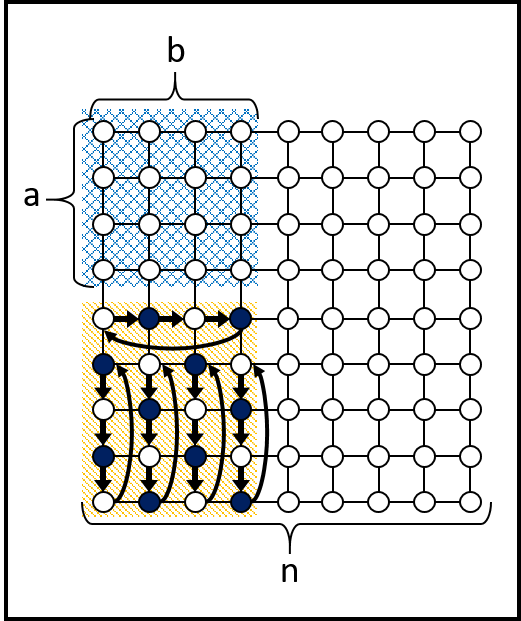}
         \caption{$(n-a)$ is odd; $b$ is even}
         \label{fig:LVS_Main2}
     \end{subfigure}
     \hfill
     \begin{subfigure}[b]{0.3\textwidth}
         \centering
         \includegraphics[width=\textwidth]{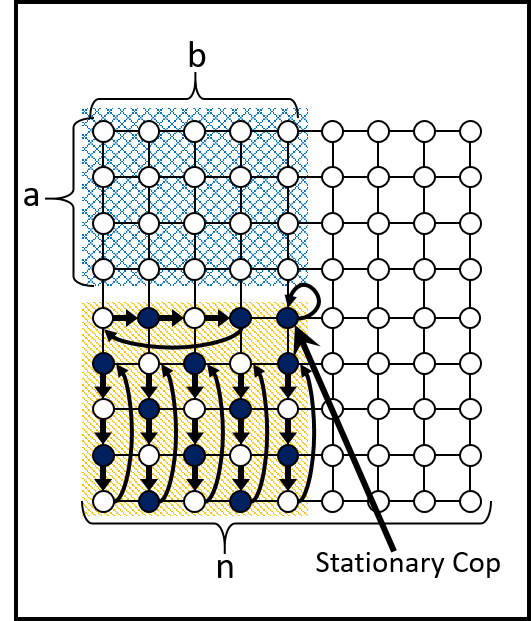}
         \caption{$(n-a)$ is odd; $b$ is odd}
         \label{fig:LVS_Main3}
     \end{subfigure}
     \hfill
        \caption{Cops' strategy for maintaining lower vulnerable set}
        \label{fig:LVS_Main}
\end{figure}

By moving thus, the cops ensure that every vertex of the upper vulnerable set is occupied on every other turn, except perhaps for a single vertex that is occupied every turn (in the case where the upper vulnerable set has an odd number of vertices).  Consequently, the number of maintenance cops needed for the upper vulnerable set is $\ceil{\frac{a\cdot (n-b)}{2}}$, while the number needed for the lower vulnerable set is $\ceil{\frac{(n-a)\cdot b}{2}}$.

Next, we explain how the cops execute expansion moves.  Recall that to prepare for expansion moves, free cops may need to move through the set of secure and vulnerable vertices; since the maintenance cops are preventing any recontamination, the free cops may take as long as they need to get themselves into position.  We first consider the case where the secure set has size $a \times b$ and the cops aim to expand it to $(a+1) \times b$.  To prepare for this expansion move, free cops move so that one free cop occupies the same vertex as each maintenance cop on row $a$ of the upper vulnerable set; we will refer to these free cops as \textit{expansion cops}.  Once all of the expansion cops are in position, they will move so as to clean row $a+1$.  If $n-b$ is even, then each expansion cop simply moves down to row $a+1$, thereby extending the cops' checkerboard pattern in the upper vulnerable set.  
The expansion cops then become maintenance cops, moving in concert with the other maintenance cops in the upper vulnerable set.  This ensures that all vertices in columns $b+1, \dots, n$ of row $a+1$ will be seen in every round; in particular, these vertices become vulnerable, and the vertices in columns $1, \dots, b$ of row $a+1$ become secure, thereby producing an $(a+1) \times b$ secure set.  (See Figure \ref{fig:Even_Expansion}.)  

\begin{figure}[h!]
     \centering
     \begin{subfigure}[b]{0.3\textwidth}
         \centering
         \includegraphics[width=\textwidth]{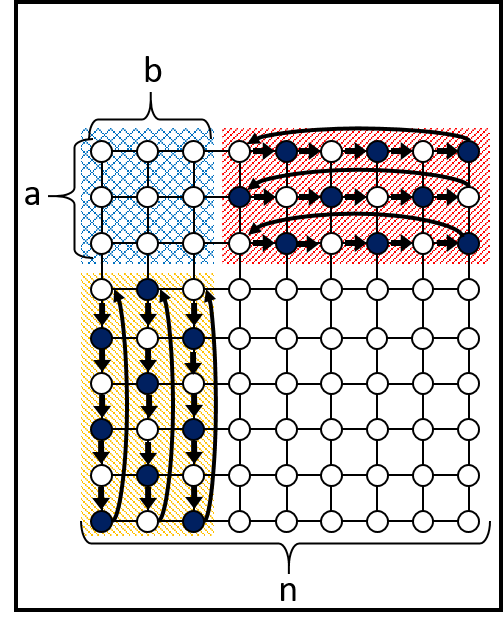}
         \caption{Before expansion} 
         \label{fig:Even_expan1}
     \end{subfigure}
     \hfill
     \begin{subfigure}[b]{0.3\textwidth}
         \centering
         \includegraphics[width=\textwidth]{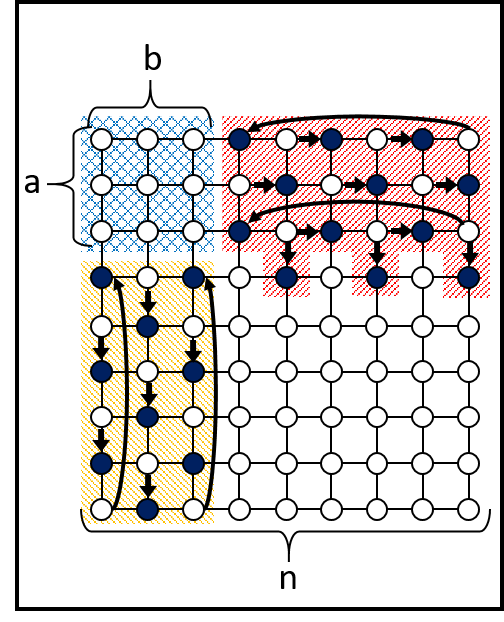}
         \caption{After first cop turn} 
         \label{fig:Even_expan2}
     \end{subfigure}
     \hfill
     \begin{subfigure}[b]{0.3\textwidth}
         \centering
         \includegraphics[width=\textwidth]{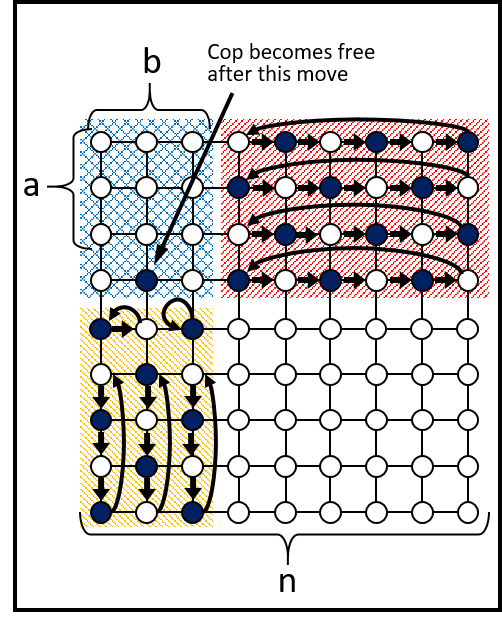}
         \caption{After expansion} 
         \label{fig:Even_expan3}
     \end{subfigure}
        \caption{Expansion move ($n-b$ even)}
        \label{fig:Even_Expansion}
\end{figure}

If instead $n-b$ is odd, then the expansion process is slightly more complex.  If $n-b$ is odd and $a$ is even, then expansion cops occupy vertices $b+1, b+3, \dots, n$ of row $a$.  They wait until these vertices are simultaneously occupied by maintenance cops, at which point each expansion cop moves down to row $a+1$.  The expansion cop who moved to $(b+1,a+1)$ becomes a stationary maintenance cop, while the other expansion cops begin to cycle rightward, thereby extending the cops' checkerboard pattern in the upper vulnerable set.  (See Figure \ref{fig:Odd_NStat_Expansion}.)  
Finally, suppose that both $n-b$ and $a$ are odd.  Recall that in this case, a stationary maintenance cop occupies vertex $(b+1,a)$.  Expansion cops move to occupy the vertices in columns $b+3, b+5, \dots, n$ of row $a$ and wait until maintenance cops move onto these vertices; then the expansion cops, along with the stationary maintenance cop on vertex $(b+1,a)$, all move down to row $a+1$.  The cop who moved to $(b+1,a+1)$ now begins to cycle vertically with the other maintenance cops in column $b+1$, while the remaining expansion cops cycle horizontally to extend the cops' checkerboard pattern in the upper vulnerable set.  (See Figure \ref{fig:Stat_Expansion}.)  

\begin{figure}[ht]
     \centering
     \begin{subfigure}[b]{0.3\textwidth}
         \centering
         \includegraphics[width=\textwidth]{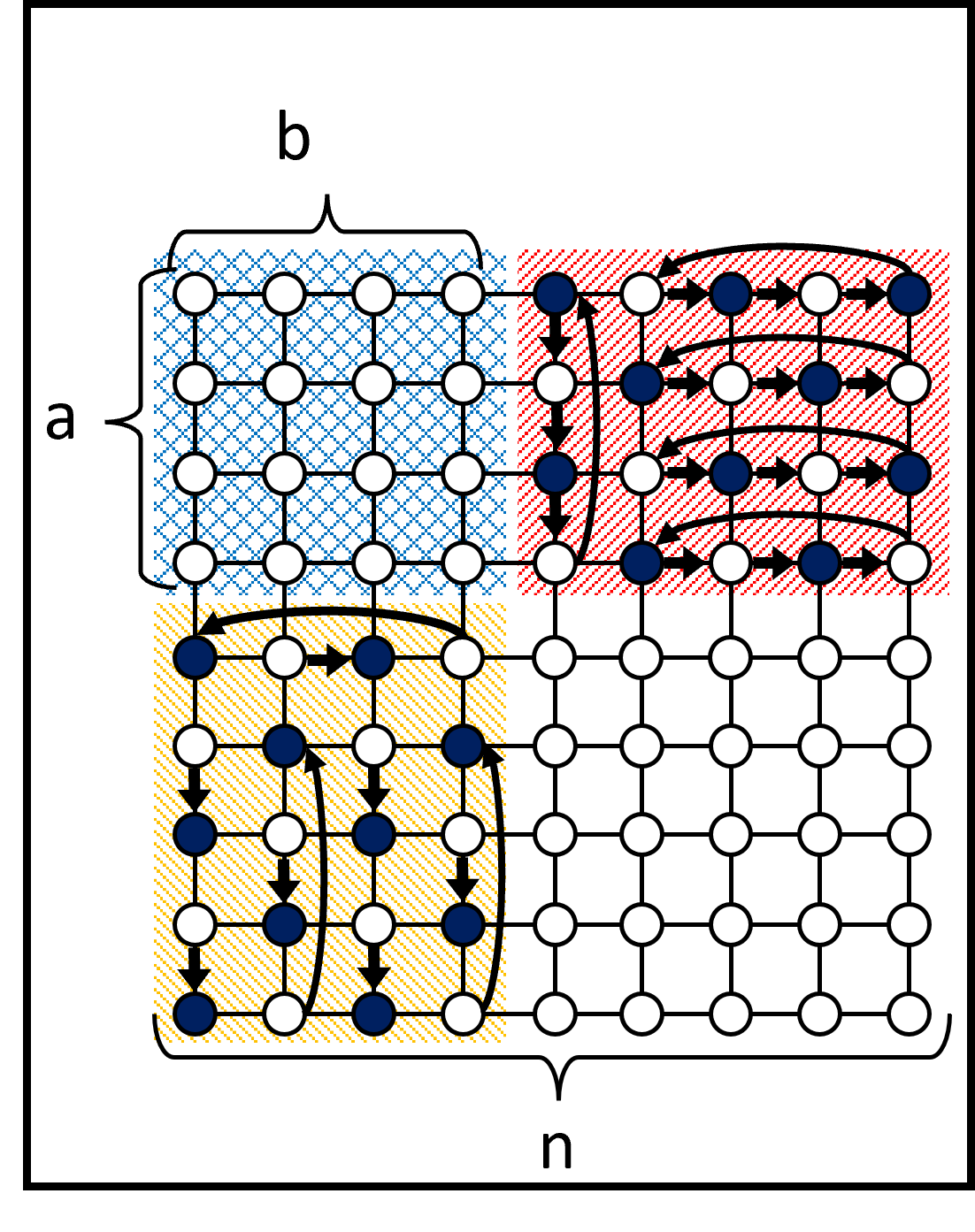}
         \caption{Before expansion}
         \label{fig:Odd_NStat_expan1}
     \end{subfigure}
     \hfill
     \begin{subfigure}[b]{0.3\textwidth}
         \centering
         \includegraphics[width=\textwidth]{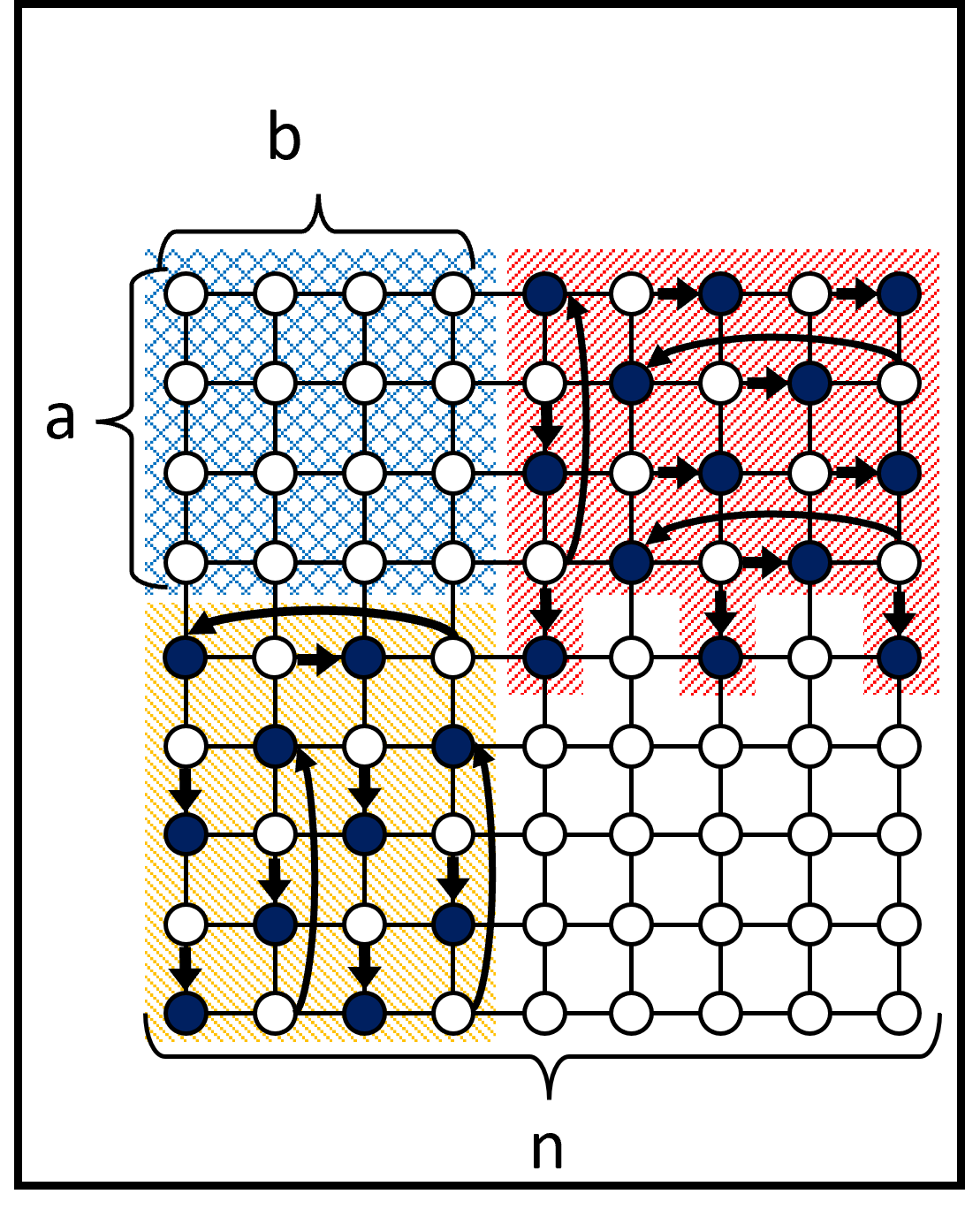}
         \caption{After first cop move} 
         \label{fig:Odd_NStat_expan2}
     \end{subfigure}
     \hfill
     \begin{subfigure}[b]{0.3\textwidth}
         \centering
         \includegraphics[width=\textwidth]{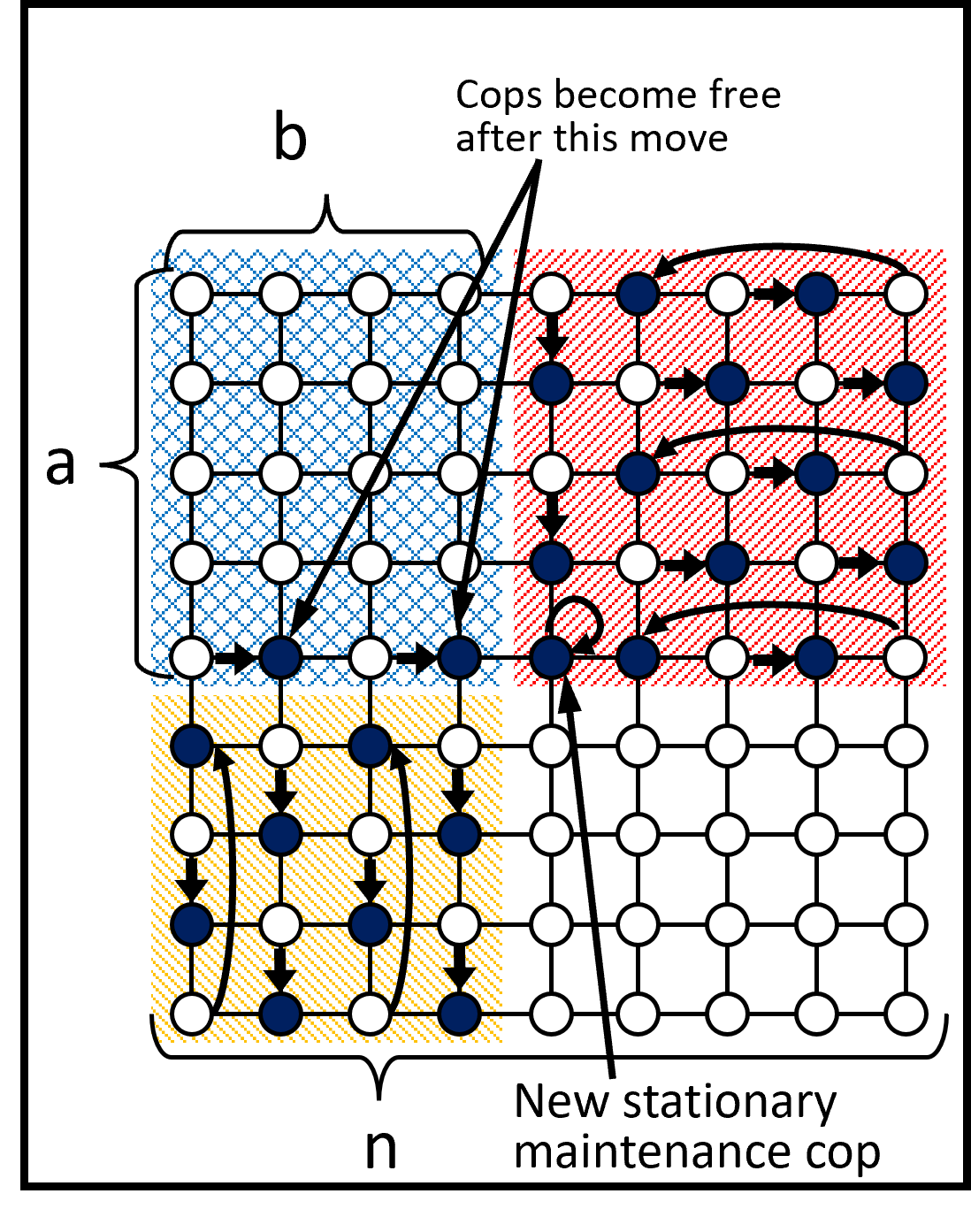}
         \caption{After expansion} 
         \label{fig:Odd_NStat_expan3}
     \end{subfigure}
        \caption{Expansion move ($n-b$ odd; $a$ even)}
        \label{fig:Odd_NStat_Expansion}
\end{figure}

\begin{figure}[h]
     \centering
     \begin{subfigure}[b]{0.3\textwidth}
         \centering
         \includegraphics[width=\textwidth]{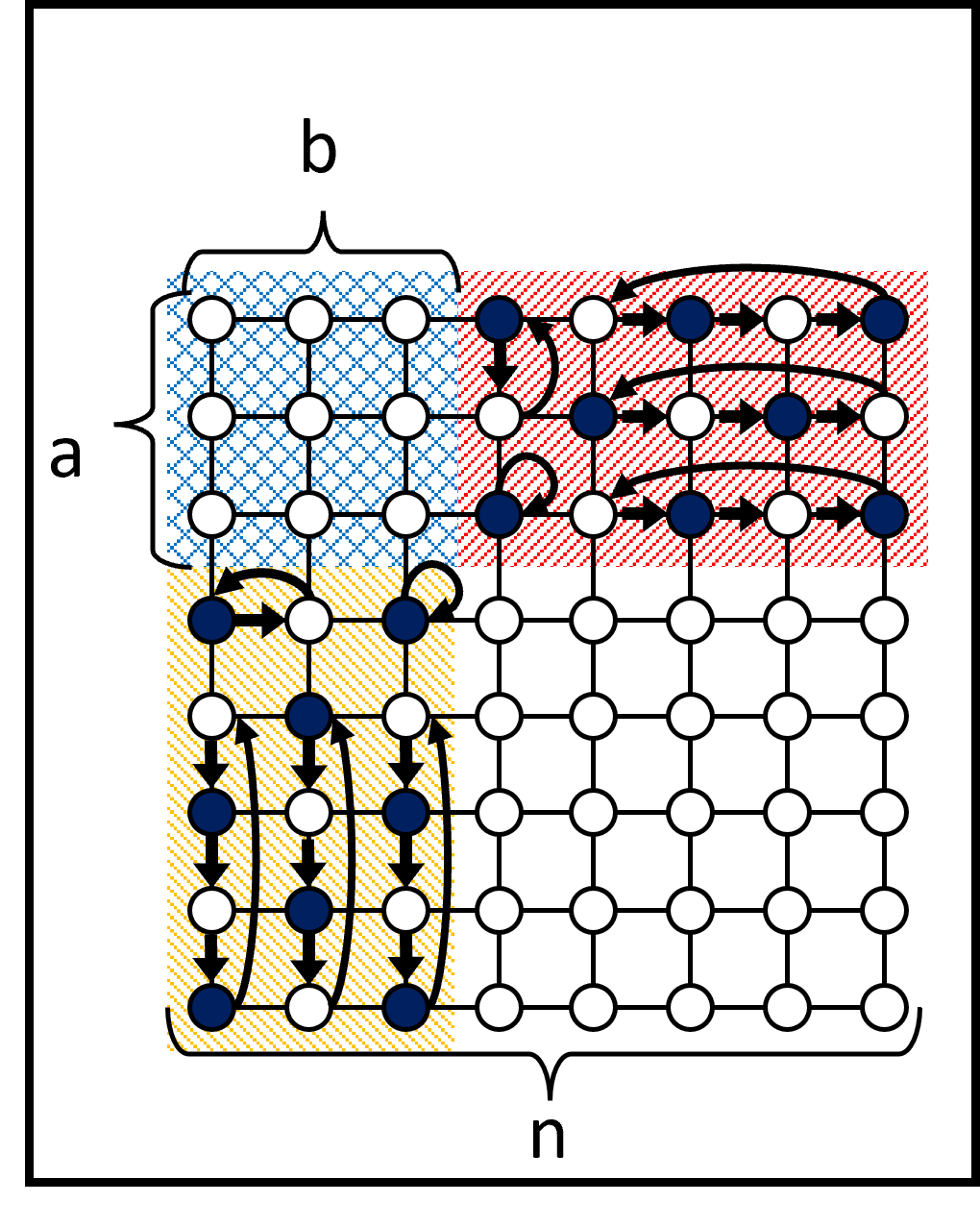}
         \caption{Before expansion} 
         \label{fig:Stat_expan1}
     \end{subfigure}
     \hfill
     \begin{subfigure}[b]{0.3\textwidth}
         \centering
         \includegraphics[width=\textwidth]{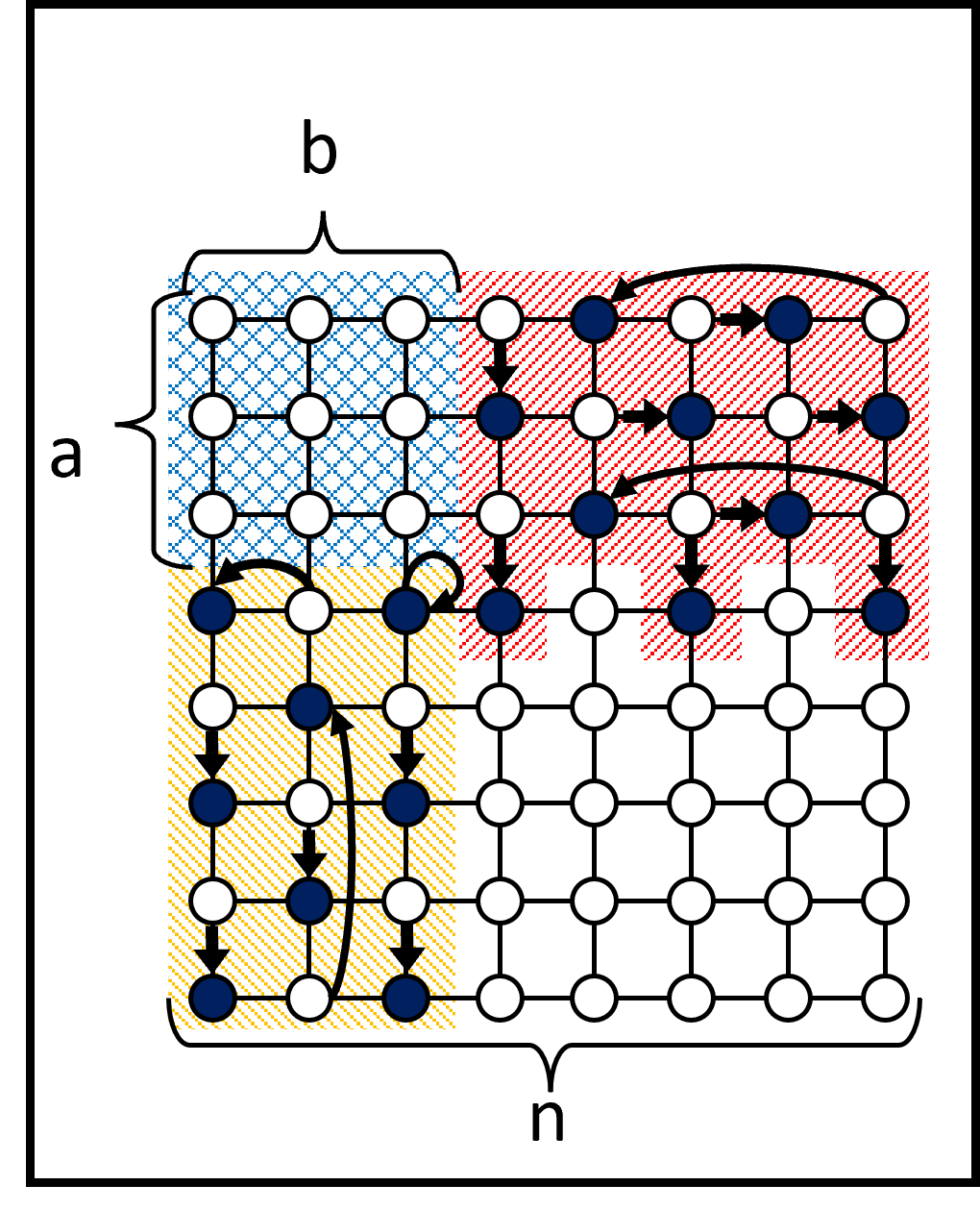}
         \caption{After first cop move} 
         \label{fig:Stat_expan2}
     \end{subfigure}
     \hfill
     \begin{subfigure}[b]{0.3\textwidth}
         \centering
         \includegraphics[width=\textwidth]{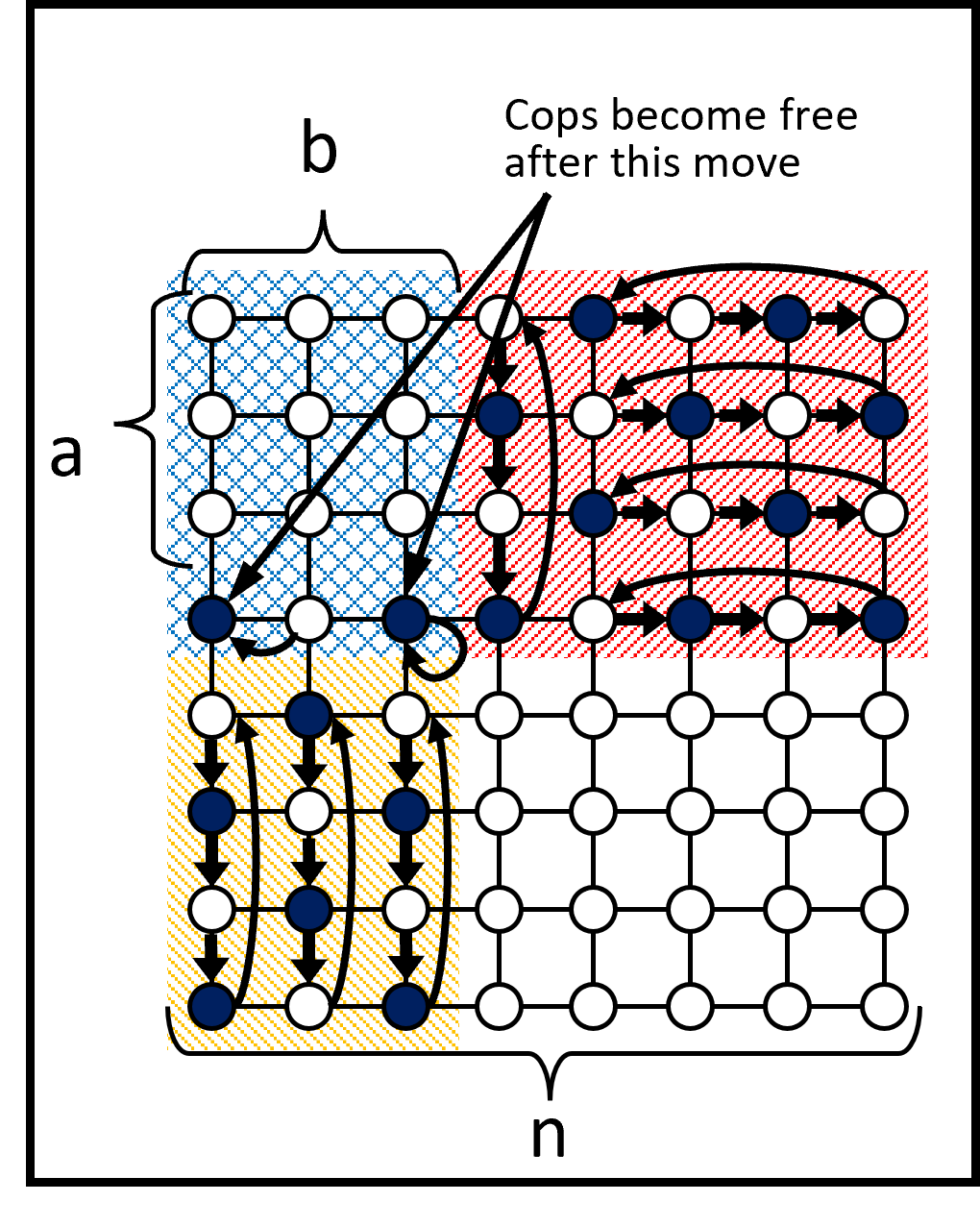}
         \caption{After expansion} 
         \label{fig:Stat_expan3}
     \end{subfigure}
        \caption{Expansion move ($n-b$ odd; $a$ odd)}
        \label{fig:Stat_Expansion}
\end{figure}

In any case, the total number of cops required to perform this expansion while also maintaining the upper vulnerable set is $\ceil{\frac{(a+1)(n-b)}{2}}$.  Meanwhile, another $\ceil{\frac{b(n-a)}{2}}$ maintenance cops patrol the lower vulnerable set.  Thus, overall, the expansion requires a total of $\ceil{\frac{(a+1)(n-b)}{2}} + \ceil{\frac{b(n-a)}{2}}$ cops.  Recall that expansion of the upper vulnerable set occurs when $a=b$, so the total number of cops required to perform this expansion move is $\ceil{\frac{(a+1)(n-a)}{2}}+\ceil{\frac{a(n-a)}{2}}$.  When $(n-a)$ is even, the total number of cops is $\frac{(a+1)(n-a)+a(n-a)}{2}$, which simplifies to $-a^2 + \left(n-\frac{1}{2}\right )a + \frac{n}{2}$; when $(n-a)$ is odd, the total number of cops is $\frac{(a+1)(n-a)+a(n-a)+1}{2}$, which simplifies to $-a^2 + \left(n-\frac{1}{2}\right)a + \frac{n}{2}+\frac{1}{2}$.  In either case, the total number of cops required is bounded above by $-a^2 + (n-\frac{1}{2})a + \frac{n+1}{2}$.  This bound is increasing in $a$ when $a < \frac{n}{2}$ and decreasing when $a > \frac{n}{2}$, so it is maximized for either $a=\floor{n/2}$ or $a=\ceil{n/2}$.  Upon examining both possibilities, we find that the expansion requires no more than $\frac{n^2 + n + 2}{4}$ cops.  Since the number of cops needed must be an integer, it is bounded above by $\floor{\frac{n^2+n+2}{4}}$ which, for integer values of $n$, is equivalent to $\ceil{\frac{n^2+n}{4}}$.

When $a=b+1$, the cops play similarly to expand the secure set from $a \times b$ to $a \times (b+1)$; the total number of cops needed for this expansion is $\lceil\frac{(b+1)(n-a)}{2}\rceil + \lceil\frac{a(n-b)}{2}\rceil$.  
Similar computations to those used above show that the same upper bound applies: the expansion requires no more than $\ceil{\frac{n^2 + n}{4}}$ cops.
Thus $\ceil{\frac{n^2 + n}{4}}$ cops can expand the secure set until it encompasses the entire graph, at which point the cops win.

For the lower bound, we show that $c'_0(H(2,n)) \ge \ceil{\frac{n^2+n}{4}}$, from which it will also follow that $\weakmonoloc_0(H(2,n)) \ge \ceil{\frac{n^2+n}{4}}$.  As a first step toward this lower bound, we bound the minimum size of a secure set that the cops must produce before winning the game.  First note that in any successful cop strategy, every non-secured vertex must be seen in the final round of the game.  Since each cop can only see a maximum of 2 vertices in each round, any strategy that does not create a secure set prior to the final round must use at least $\frac{n^2}{2}$ cops; hence any successful strategy using fewer than $\frac{n^2}{2}$ cops must create a secure set.

Recall that any secure set can be represented as a rectangular subset of $G$ and will induce upper and lower vulnerable sets as described above.  We determine the number of cops needed to expand the secure set from size $a \times b$ to size $(a+c) \times b$ in a single step for some $c \in \mathbb{N}$.
Note that during the round in which the secure set is expanded, the cops must see every vulnerable or undiscovered vertex that is to be secure or vulnerable after expansion: at the beginning of the round, every such vertex either contains a cop or is dirty and hence must be cleaned by the end of the round.  Following expansion, there are $(a+c)\cdot b$ secure vertices and $(a+c)\cdot (n-b) + b \cdot (n-a-c)$ vulnerable vertices; of these, at most $a \cdot b$ were secure prior to expansion and thus did not need to be seen.  In total, the number of vertices that the cops must see in the round prior to expansion is at least $(a+c) \cdot b + (a+c) \cdot (n-b) + b \cdot (n-a-c) - ab$, which simplifies to $(a+c)(n-b) + b(n-a)$.


Since each cop can see at most two vertices on each turn, the number of cops needed is at least 
$\ceil{\frac{b(n-a) + (a+c)(n-b)}{2}}$.  Thus, for the purposes of obtaining a lower bound on the number of cops needed, we may suppose that $c=1$: if the number of cops available meets the lower bound on the number of cops needed to expand the secure set by two or more rows at once, then it also meets the lower bound on the number of cops needed to repeatedly expand the secure set by one row at a time, thereby achieving the same effect.  

Similarly, if the cops want to expand the secure set in multiple directions, i.e. to produce a secure set of size $(a+c) \times (b+d)$ for $c, d \in \mathbb{N}$, then at least $\ceil{\frac{(a+b)n -2ab + (n-b)c + (n-a-c)d}{2}}$ cops are required.  This is because the number of vulnerable vertices before expansion was $a(n-b) + b(n-a)$, another $n-b$ dirty vertices must be cleaned in each of the $c$ new rows of the secure set, and another $n-a-c$ dirty vertices must be cleaned in each of the $d$ new columns of the secure set. 
Comparing this to our lower bound on the number of cops needed to expand the secure set through a series of expansion moves, each consisting of only one direction of expansion, we see that expanding in multiple directions simultaneously does not result in a smaller lower bound.  Therefore we may assume that that each expansion move only expands the size of the secure set by one row or by one column.

Suppose there exists a winning strategy using $\ceil{\frac{n^2 + n}{4}} - 1$ cops.  Since each cop can see at most two vertices in the final round, the number of non-secure vertices at the beginning of the final round cannot exceed $2\ceil{\frac{n^2 + n}{4}} - 2$. Thus, prior to the cops' last turn, the secure set must contain at least $\frac{n^2 - n}{2}$ vertices.  Consequently, the cops must at some point produce an $a \times b$ secure set with $ab \ge \frac{n^2-n}{2}$; in particular, we must eventually have $a+b > n$.
     
Because we have assumed that the cops expand the set by one row or by one column at a time, there must be some expansion move prior to which the sum of the dimensions of the secure set is $n$ and after which the sum of the dimensions is $n+1$.  We may assume that the secure set was expanded from size $a \times b$ to size $(a+1)\times b$, where $a+b=n$; the case where we expand from $a \times b$ to $a \times (b+1)$ is similar.  As noted above, this expansion requires at least $\ceil{\frac{b(n-a) + (a+1)(n-b)}{2}}$ cops.  Taking into account that $a+b=n$ and hence $b=n-a$, we see that the number of cops needed is at least 
$a^2 - (n-\frac{1}{2})a + \frac{n^2}{2}$.  Treating $a$ as a real-valued variable, we find that the function is minimized when $a=\frac{n}{2} - \frac{1}{4}$; 
at this value of $a$, the bound on the number of cops is $\frac{n^2 + n}{4} - \frac{1}{16}$.  
Thus the number of cops used by any winning weakly monotone cop strategy must be at least $\ceil{\frac{n^2+n}{4}-\frac{1}{16}}$, which is equivalent to $\ceil{\frac{n^2+n}{4}}$ for integer values of $n$.  Thus $\search_0(H(2,n)) \ge \ceil{\frac{n^2+n}{4}}$, as claimed.
\end{proof}
\,
\end{section}

\begin{section}{Higher-Dimensional Hamming Graphs}\label{sec:high_dimension}

In this section, we turn our attention to higher-dimensional Hamming graphs.  The $d$-dimensional Hamming graph $H(d,n)$ has diameter $d$, so when $\ell \ge d$ we have $c_{\ell}(H(d,n)) = c(H(d,n))$; Neufeld and Nowakowski \cite{NN98} showed that $c(H,(d,n)) = d$, hence also $c_{\ell}(H(d,n)) = d$.  For $\ell \le d-1$, the situation is less clear.  

Theorem \ref{thm:seeing_plus_one} and the aforementioned result of Neufeld and Nowakowski together imply that for $\ell \ge 2$, we have
\[\lloc(H(d,n)) \le \lcap(H(d,n)) \le \max\{\lloc(H(d,n)), d+1\}.\]
In general, for $d \ge 2$ and $2 \le \ell < d$, we will see that $\lloc(H(d,n))$ is much larger than $d$; consequently, in such situations $\lcap(H(d,n)) = \lloc(H(d,n))$.  Thus, we will focus exclusively on the problem of seeing (but not capturing) a robber on $H(d,n)$.

We begin with some observations that, together with Theorem \ref{thm:clique_product_search}, will yield an easy upper bound on $\lloc(H(d,n))$; the remainder of the section is devoted to obtaining a more difficult lower bound on $\search_{d-1}(H(d,n))$.

\begin{proposition}\label{prop:Gnd_upper}
For all positive integers $n$, $d$, and $\ell$ with $n \ge 3$, we have $\search_{\ell+1}(H(d+1,n)) \le \search_{\ell}(H(d,n))$.
\end{proposition}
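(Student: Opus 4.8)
The plan is to lift an optimal $\ell$-visibility search strategy on $H(d,n)$ to an $(\ell+1)$-visibility search strategy on $H(d+1,n)$ using the same number of cops. Write $k = \search_{\ell}(H(d,n))$ and fix a winning $\ell$-visibility search strategy $\sigma$ for $k$ cops on $H(d,n)$. Since the Cartesian product is associative, I would view $H(d+1,n) = H(d,n) \cart K_n$ and identify its vertex set with $V(H(d,n)) \times \{1,\dots,n\}$, thinking of the last coordinate as selecting one of $n$ \emph{layers}, each a copy of $H(d,n)$. Distances in a Cartesian product add, so for vertices $(v,i)$ and $(w,j)$ we have $\operatorname{dist}((v,i),(w,j)) = \operatorname{dist}_{H(d,n)}(v,w) + \operatorname{dist}_{K_n}(i,j)$, where the second term is $0$ if $i=j$ and $1$ otherwise.

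The cops' strategy in $H(d+1,n)$ is to run $\sigma$ entirely within layer $1$: each cop keeps its last coordinate equal to $1$ and updates its first $d$ coordinates exactly as $\sigma$ prescribes. This is always a legal move, since altering one of the first $d$ coordinates is legal in $H(d+1,n)$. The crucial observation is that the extra unit of visibility exactly compensates for the single unit of distance needed to cross between layers: if a cop sits at $(v,1)$ with $\operatorname{dist}_{H(d,n)}(v,w) \le \ell$, then for \emph{every} layer $j$ we have $\operatorname{dist}((v,1),(w,j)) \le \ell+1$, so that cop sees $(w,j)$ no matter which layer it occupies. In other words, a cop running $\sigma$ from layer $1$ sees, across all $n$ layers simultaneously, at least the entire $\ell$-ball that the corresponding cop sees in the simulated game on $H(d,n)$.

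Next I would track the robber. Any robber in $H(d+1,n)$ projects, by deleting its last coordinate, to a token $w$ moving in $H(d,n)$; since each robber move either changes one of the first $d$ coordinates (moving $w$ to a neighbor) or changes only the last coordinate (leaving $w$ fixed), this projection is a legal robber trajectory in $H(d,n)$, produced in response to the projected (hence $\sigma$-governed) cop positions. Because $\sigma$ wins against \emph{every} robber, after finitely many rounds some cop at $(v,1)$ satisfies $\operatorname{dist}_{H(d,n)}(v,w) \le \ell$, i.e. $\sigma$ "sees" the projection; by the observation above, at that very moment the lifted cop sees the actual robber $(w,j)$ in $H(d+1,n)$, regardless of his layer. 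Hence $k$ cops suffice to see the robber on $H(d+1,n)$ under $(\ell+1)$-visibility, giving $\search_{\ell+1}(H(d+1,n)) \le k = \search_{\ell}(H(d,n))$.

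I expect the main obstacle to be the careful justification that this is a valid simulation, rather than the (routine) distance bookkeeping. Specifically, I must argue that (a) the turn structures align, so that each cop move and robber move in $H(d+1,n)$ corresponds to one in the simulated game; (b) the cops have enough information to run $\sigma$ — here the point is that the only information $\sigma$ requires is the value of $w$ at the moments when it sees the projection, and we have shown that whenever $\sigma$ sees $w$ the real cops already see the robber, so the first such moment is itself a win in $H(d+1,n)$ and the simulation never needs to proceed past it; and (c) the projected robber constitutes an arbitrary legal opponent for $\sigma$, so that $\sigma$'s guarantee applies. The hypothesis $n \ge 3$ plays no essential role in this particular reduction beyond matching the standing conventions of the section.
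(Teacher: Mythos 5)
Your proposal is correct, and the cop strategy is exactly the paper's: keep the last coordinate fixed at $1$ and run the optimal $\ell$-visibility search strategy on the first $d$ coordinates. Where you diverge is in the verification. The paper never mentions robber trajectories or distance additivity; instead it works entirely inside its clean/dirty formalism, proving by a first-failure (minimal counterexample) argument over rounds that whenever a vertex $(v_1,\dots,v_d)$ is clean in the simulated $\ell$-visibility game, every vertex of the fiber $(v_1,\dots,v_d,v_{d+1})$ is clean in the real $(\ell+1)$-visibility game; the delicate part there is analyzing how recontamination could spread to a fiber vertex without spreading to its projection. Your argument is the dual, trajectory-side version: project the robber's walk to $H(d,n)$, note the projection is a legal walk, invoke the winning guarantee of $\sigma$ against all walks, and use $\operatorname{dist}\bigl((v,1),(w,j)\bigr) = \operatorname{dist}_{H(d,n)}(v,w) + \operatorname{dist}_{K_n}(1,j) \le \ell+1$ to convert simulated detection into real detection. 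This buys brevity and transparency -- no recontamination bookkeeping at all -- at the cost of leaning on the equivalence between ``$k$ cops win the search game'' and ``every legal robber walk is detected in finite time,'' which the paper's omniscient-robber formulation sanctions but which you use without comment (it is where your items (a)--(c) do their work). One practical advantage of the paper's fiber-cleanliness formulation is that it is reused verbatim in the proof of the next proposition (Proposition 4.3), where the same clean-set comparison between two games is needed; your version would require restating that argument in trajectory language there. Your closing remark is also accurate: the hypothesis $n \ge 3$ is not used in either proof of this proposition.
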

\begin{proof}
Let $k = \search_{\ell}(H(d,n))$ and consider the $(\ell+1)$-visibility game on $H(d+1,n)$ with $k$ cops.  To find the robber on $H(d+1,n)$, each cop can change the first $d$ coordinates of her position according to a strategy for seeing the robber in the $\ell$-visibility game on $H(d,n)$, while keeping the last coordinate of her position equal to 1.

To show that this suffices to locate the robber on $H(d+1, n)$, we claim that at all times, if vertex $(v_1, \dots, v_d)$ is clean in the imagined $\ell$-visibility game on $H(d,n)$, then for all $v_{d+1} \in \{1, \dots, n\}$, vertex $(v_1, \dots, v_d, v_{d+1})$ is clean in the $(\ell+1)$-visibility game on $H(d+1,n)$.  Suppose for the sake of contradiction that this is not the case, and consider the first point in time in which some vertex $x = (x_1, \dots, x_d)$ is clean in the $\ell$-visibility game, but $x' = (x_1, \dots, x_d, x_{d+1})$ is contaminated in the $(\ell+1)$-visibility game for some $d+1$.  

Note that if a cop sees $x$ in the imagined $\ell$-visibility game, then there are at least $d-\ell$ values of $i$ such that the $i$th coordinate of her position is $x_i$.  Consequently, that cop must also see $x'$ in the $(\ell+1)$-visibility game on $H(d+1,n)$.  Hence, the first point at which $x$ is clean in the $\ell$-visibility game, but $x'$ is contaminated in the $(\ell+1)$-visibility game must occur immediately after the recontamination phase of some round, say round $r$.  In particular, it must be the case that both $x$ and $x'$ were clean in their respective games immediately following the cop move in round $r$, but during the ensuing recontamination phase, $x'$ became contaminated and $x$ did not.  

It now follows that in the $(\ell+1)$-visibility game, contamination must have spread to $x'$ from some vertex $w' = (w_1, \dots, w_d, w_{d+1})$ where $w_i \not = v_i$ for exactly one $i$.  Thus, $w'$ was dirty prior to recontamination in round $r$; hence, by choice of $x$ and $x'$, it must be that $w = (w_1, \dots, w_d)$ was also dirty at that time.  Furthermore, because $x'$ becomes dirty after recontamination in round $r$, no cop could possibly see that vertex in the $(\ell+1)$-visibility game, and hence no cop sees $x$ in the $\ell$-visibility game.  Consequently, either $w = x$, or contamination should have spread from $w$ to $x$; either possibility contradicts the assumption that $x$ was clean after the recontamination phase of round $r$.


This establishes the claim that if some vertex $(v_1, \dots, v_d)$ is clean in the imagined $\ell$-visibility game on $H(d,n)$, then all vertices of the form $(v_1, \dots, v_d, v_{d+1})$ are clean in the $(\ell+1)$-visibility game on $H(d+1, n)$.  Since the cops can eventually see all of $H(d,n)$ in the $\ell$-visibility game, it follows that they can eventually see all of $H(d+1,n)$ in the $(\ell+1)$-visibility game. 
\end{proof}

We remark that the cop strategy in Proposition \ref{prop:Gnd_upper} does not take full advantage of the added visibility in the $(\ell+1)$-visibility game; as such, it is likely to be far from tight in most (if not all) cases.

\begin{proposition}\label{prop:Gnd_lower_visibility_upper_bound}
For positive integers $n$, $\ell$, and $d$ with $n \ge 3$, we have $\lloc(H(d,n)) \le n \cdot \search_{\ell+1}(H(d,n))$.
\end{proposition}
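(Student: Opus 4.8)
The plan is to reverse the trade made in Proposition~\ref{prop:Gnd_upper}: there an extra coordinate bought a unit of visibility, and here I want to spend a factor of $n$ in cops to buy that unit back. Fix $k=\search_{\ell+1}(H(d,n))$ and an optimal $(\ell+1)$-visibility search strategy $\sigma$ using $k$ cops. I will play the $\ell$-visibility game on $H(d,n)$ with $nk$ cops, grouped into $k$ \emph{platoons} of $n$ cops each, and have platoon $i$ shadow cop $i$ of $\sigma$. Whenever cop $i$ occupies $(v_1,\dots,v_d)$, the platoon spreads along the entire first-coordinate line through it, occupying $(1,v_2,\dots,v_d),\dots,(n,v_2,\dots,v_d)$; a platoon can always reassemble on the next such line in a single move, so this shadowing is legal round by round.

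The mechanism is the following visibility computation. A platoon cop at $(j,v_2,\dots,v_d)$ sees $w=(w_1,\dots,w_d)$ under $\ell$-visibility exactly when $[w_1\neq j]+\sum_{t\ge 2}[w_t\neq v_t]\le \ell$, so taking $j=w_1$ shows that the platoon collectively sees $w$ precisely when $\sum_{t\ge 2}[w_t\neq v_t]\le\ell$. Meanwhile the shadowed cop sees $w$ under $(\ell+1)$-visibility precisely when $[w_1\neq v_1]+\sum_{t\ge 2}[w_t\neq v_t]\le\ell+1$. These two conditions agree whenever $w_1\neq v_1$, and differ only on the \emph{shell} $\Sigma_i=\{w:\,w_1=v_1,\ \sum_{t\ge 2}[w_t\neq v_t]=\ell+1\}$, which the shadowed cop cleans but the platoon does not. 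Thus, up to the shells, spreading $n$ cops along a coordinate line exactly reproduces the vision of one $(\ell+1)$-visibility cop.

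From here I would aim for a domination invariant in the spirit of the proof of Proposition~\ref{prop:Gnd_upper}, comparing the two games on $H(d,n)$ directly but allowing a one-step erosion to absorb the shells. Writing $C_t$ for the clean set in the simulated game and $R_t$ for the clean set in the real game, the target invariant is $R_t\supseteq E(C_t)$, where $E(C)=\{w:\,B_1[w]\subseteq C\}$ is the set of vertices all of whose neighbors are clean in $\sigma$. This erosion is precisely what the reduced visibility can afford: a single $\ell$-visibility cop at $v$ cleans $B_\ell(v)=E\big(B_{\ell+1}(v)\big)$, and a platoon cleans a tube containing it. Crucially, the invariant still suffices to win: when $\sigma$ finally reveals the robber we have $C_t=V(H(d,n))$, whence $E(C_t)=V(H(d,n))\subseteq R_t$ and the real cops have cleaned everything, so they too have seen the robber.

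The crux — and the step I expect to fight with — is showing that this eroded invariant survives the recontamination phase despite the missing shells. A shell vertex $w$ enters $E(C_t)$ only once its whole neighborhood is clean in $\sigma$; in particular the entire coordinate line over $(w_2,\dots,w_d)$ must be clean, and the real difficulty is to convert that into a platoon actually having cleaned the line over $w$, rather than merely its projected $\ell$-ball. Doing this while keeping the moving boundary of $E(C_t)$ guarded by the platoons — and, in particular, arguing that the platoons' tube-shaped vision (rather than a ball-by-ball covering, which would cost a factor of order $d$) is enough to hold the front — is where the single-coordinate nature of robber moves must be exploited. If $\sigma$ can be taken weakly monotonic, the bookkeeping simplifies considerably, so I would try to reduce to that case first. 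Carrying this through yields $\lloc(H(d,n))\le n\cdot\search_{\ell+1}(H(d,n))$.
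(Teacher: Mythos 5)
Your shell computation is correct, and it pinpoints a genuine error: the paper's own proof of this proposition is exactly the naive simulation you describe, and it rests on the claim that whenever the simulated cop sees a vertex in the $(\ell+1)$-visibility game, some platoon cop sees that vertex in the $\ell$-visibility game --- precisely the claim your shells $\Sigma_i$ refute. The gap in your write-up is the step you defer, and unfortunately that step is not merely hard but impossible: no invariant can rescue this strategy. Since each platoon occupies a full first-coordinate line, since its vision is a union of such lines, and since any two vertices on a line are adjacent (so an unseen line that is not entirely clean becomes entirely dirty at the next recontamination), the clean set in the real game is at all times a union of lines. The platoon game therefore evolves exactly as the $\ell$-visibility game on the quotient $H(d-1,n)$ obtained by deleting the first coordinate, with platoon $i$ acting as a single cop at $(v_2,\dots,v_d)$. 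Hence the simulation proves $c'_{\ell}(H(d,n)) \le n \cdot c'_{\ell}(H(d-1,n))$ and nothing stronger; it cannot prove the stated bound, because a winning $(\ell+1)$-visibility strategy on $H(d,n)$ need not project to a winning $\ell$-visibility strategy on $H(d-1,n)$.

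The case $d=2$, $\ell=0$ makes the obstruction concrete. There $k = c'_1(H(2,n)) = \lceil (n+1)/3 \rceil$ by Theorem \ref{thm:clique_product_search}, and the platoon game is the $0$-visibility game on $K_n$ with $k$ cops, in which the clean set after every robber turn is just the set of occupied vertices; so at most $2k$ vertices are clean after any cop move, and since $2\lceil (n+1)/3 \rceil < n$ for $n \ge 7$, the platoons never clean the graph --- even though $nk$ cops are ample in principle, as $c'_0(H(2,n)) = \lceil (n^2+n)/4 \rceil$ by Theorem \ref{thm:dimension_2_visibility_0}. In particular your invariant $R_t \supseteq E(C_t)$ cannot be maintained (it would force a win); indeed it already breaks at a cop move, since a shell vertex $w = (v_1,y)$ with $y \neq v_2$ enters $E(C_t)$ as soon as the vertices $(x,y)$ with $x \neq v_1$ all lie in $C_t$ (the remaining neighbors of $w$ lie in $B_1(v)$), yet $w$ belongs to no tube and nothing places $w$ in $R_t$. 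The productive repair is to change the statement rather than the bookkeeping: your tube computation already proves $c'_{\ell}(H(d,n)) \le n \cdot c'_{\ell}(H(d-1,n))$, and iterating that bound and then applying Proposition \ref{prop:Gnd_upper} and Theorem \ref{thm:clique_product_search} still yields Corollary \ref{cor:Gnd_lower_visibility_upper_bound}; but the proposition as literally stated needs a genuinely different argument, and, in light of the above, so does the paper's version of it.
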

\begin{proof}
Let $k = \search_{\ell+1}(H(d,n))$; we give a strategy for $kn$ cops to locate a robber in the $\ell$-visibility game on $H(d,n)$.  The cops partition themselves into $k$ groups of $n$ cops apiece.  For all $i \in \{1, \dots, k\}$, the $i$th group of cops in the $\ell$-visibility game mirrors the movements of the $i$th cop in a winning strategy for the $(\ell+1)$-visibility game: when that cop occupies vertex $(v_1, v_2, \dots, v_d)$, the corresponding cops in the $\ell$-visibility game occupy vertices $(1, v_2, \dots, v_d), (2, v_2, \dots, v_d), \dots, (n, v_2, \dots, v_d)$.  This ensures that whenever the $i$th cop sees a vertex in the $(\ell+1)$-visibility game, some cop in the $i$th group sees that vertex in the $\ell$-visibility game.  As in the proof of Proposition \ref{prop:Gnd_upper}, it follows that at all points during the game, any vertex that is clean in the $(\ell+1)$-visibility game must also be clean in the $\ell$-visibility game; since the cops eventually clean all vertices in the $(\ell+1)$-visibility game, they must do so in the $\ell$-visibility game as well.
\end{proof}
 
Theorem \ref{thm:clique_product_search}, Proposition \ref{prop:Gnd_upper}, and Proposition \ref{prop:Gnd_lower_visibility_upper_bound} yield a general upper bound on $c'_{\ell}(H(d,n))$.

\begin{cor}\label{cor:Gnd_lower_visibility_upper_bound}
For fixed positive integers $d$ and $\ell$ with $d \ge 2$ and $\ell \le d-1$, we have $c'_{\ell}(H(d,n)) \le n^{d-\ell}\left(\frac{1}{3} + o(1)\right)$.
\end{cor}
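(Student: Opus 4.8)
The plan is to chain together the three cited results, transporting the two-dimensional bound upward in dimension and then downward in visibility. First I would use Proposition \ref{prop:Gnd_upper} to carry the bound of Theorem \ref{thm:clique_product_search} into dimension $d$ at the highest available visibility. Applying Proposition \ref{prop:Gnd_upper} repeatedly, with both the dimension and the visibility increasing by one at each step, gives
\[\search_{d-1}(H(d,n)) \le \search_{d-2}(H(d-1,n)) \le \dots \le \search_1(H(2,n)),\]
and Theorem \ref{thm:clique_product_search} evaluates the right-hand side as $\ceil{(n+1)/3}$. Hence $\search_{d-1}(H(d,n)) \le \ceil{(n+1)/3}$.

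Next I would descend from visibility $d-1$ down to the target visibility $\ell$ using Proposition \ref{prop:Gnd_lower_visibility_upper_bound}, which trades one unit of visibility for a multiplicative factor of $n$. Iterating this inequality $(d-1)-\ell$ times — a count that is nonnegative precisely because $\ell \le d-1$ — yields
\[\search_{\ell}(H(d,n)) \le n \cdot \search_{\ell+1}(H(d,n)) \le \dots \le n^{(d-1)-\ell} \cdot \search_{d-1}(H(d,n)).\]
Combining with the bound from the previous step gives $\search_{\ell}(H(d,n)) \le n^{d-1-\ell}\ceil{(n+1)/3}$.

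Finally I would convert this to the asymptotic form claimed. Since $d$ and $\ell$ are fixed, $n^{d-1-\ell}$ is a fixed power of $n$, and $\ceil{(n+1)/3} = \frac{n}{3}(1+o(1))$ as $n \to \infty$; hence the product equals $n^{d-\ell}\left(\frac{1}{3}+o(1)\right)$, as desired.

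As for obstacles: there is essentially no analytic difficulty here, as the entire argument is bookkeeping for how many times each proposition is invoked. The only point requiring care is confirming that the two telescoping chains line up — that exactly $(d-1)-\ell$ factors of $n$ combine with the single factor of roughly $n/3$ to produce the exponent $d-\ell$ — and that the hypothesis $\ell \le d-1$ is precisely what keeps every index appearing in the iterations within its valid range.
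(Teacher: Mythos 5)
Your proof is correct and follows exactly the route the paper intends: the corollary is stated as an immediate consequence of Theorem \ref{thm:clique_product_search}, Proposition \ref{prop:Gnd_upper}, and Proposition \ref{prop:Gnd_lower_visibility_upper_bound}, chained in precisely the way you describe. Your bookkeeping of the two telescoping chains and the conversion of $n^{d-1-\ell}\ceil{(n+1)/3}$ into $n^{d-\ell}\left(\frac{1}{3}+o(1)\right)$ is accurate.
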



We remark without proof that the cop strategy used in Theorem \ref{thm:dimension_2_visibility_0} to establish an upper bound on $\search_0(H(2,n))$ -- creating and gradually expanding a secure set while keeping the dimensions roughly equal -- can be extended to higher dimensions to yield the bound $c_0(H(d,n)) \le n^d \cdot \frac{1}{2}\left(\frac{d-1}{d}\right)^{d-1}$.  As such, the constant $1/3$ in Corollary \ref{cor:Gnd_lower_visibility_upper_bound} is not, in general, optimal.  However, we strongly suspect that the order of magnitude is correct, i.e. that $c'_{\ell}(H(d,n)) = \Theta(n^{d-\ell})$.

Next, we turn to the problem of establishing a lower bound on $c'_{d-1}(H(d,n))$.  We begin by introducing some terminology that will help us reason about $H(d,n)$.  

We will be particularly interested in ``lower-dimensional'' induced subgraphs of $H(d,n)$.  Consider an ordered $d$-tuple in which each coordinate belongs to the set $\{1, \dots, n\} \cup \{\ast\}$.  For such an ordered $d$-tuple $(v_1, \dots, v_d)$, we define the \textit{slice of $G$ corresponding to $(v_1, \dots, v_d)$} to be the set of vertices 
$$\{(x_1, \dots, x_d) \, \vert \, x_i = v_i \text{ for all $v_i$ such that } v_i \not = \ast\}.$$
For example, when $d = 3$, the slice corresponding to $(1,\ast,\ast)$ consists of all vertices whose first coordinate is 1 (and whose second and third coordinates can take on any value); the slice corresponding to $(\ast, 4, 2)$ consists of all vertices whose second coordinate is 4 and whose third coordinate is 2.


Each $\ast$ in a $d$-tuple represents a coordinate in which vertices of the corresponding slice are free to take on any value -- in a sense, it represents a  ``degree of freedom'' for vertices in the slice.  We define the \textit{dimension} of the slice corresponding to $(v_1, \dots, v_d)$ to be the number of $\ast$ among $v_1, \dots, v_d$.  Thus, for example, the dimension of the slice corresponding to $(\ast, 4, 2)$ is 1.  When a slice has dimension $m$, we refer to it as an \textit{$m$-slice}.  Note that each $m$-slice induces a subgraph isomorphic to $H(m,n)$.  In particular, the vertex set of $H(d,n)$ is itself a $d$-slice.  We sometimes use the term ``$m$-slice'' to refer to the subgraph of $H(d,n)$ induced by an $m$-slice.

We say that two $m$-slices are \textit{parallel} if their intersection is empty.  Note that for $m \ge 1$, an $m$-slice contains $mn$ distinct $(m-1)$-slices, which can be grouped into $m$ sets of $n$ pairwise-parallel $(m-1)$-slices.  Given a $m$-slice $S$, we refer to a set of $n$ pairwise-parallel $(m-1)$-slices in $S$ as a \textit{parallel class} of $(m-1)$-slices in $S$.  In particular, if $S$ corresponds to the $d$-tuple $(v_1, \dots, v_d)$, then each parallel class corresponds to a choice of one coordinate of $(v_1, \dots, v_d)$ that was $\ast$, and each slice in the class corresponds to a choice of value from $\{1, \dots, n\}$ to replace the $\ast$ with.

To establish a lower bound on $c'_{d-1}(H(d,n))$, we will need to argue that it is ``difficult'' for cops to clean $H(d,n)$.  Toward that end, we will want to argue that if the number of cops is ``not too large'', then $H(d,n)$ will remain ``highly dirty''.  Thus motivated, we define a 0-slice (i.e. a single vertex) to be \textit{good} if it is dirty, and for $m \in \{1, \dots, d\}$, we define an $m$-slice to be \textit{good} if it has at least $(m-1)k+1$ good $(m-1)$-slices in every parallel class, where $k$ denotes the number of cops in the game.  Similarly, we define a 0-slice to be \textit{very good} if it is dirty, and for $m \in \{1, \dots, d\}$, we define an $m$-slice to be \textit{very good} if it has at least $mk+1$ very good $(m-1)$-slices in each parallel class.  

Before proceeding, we establish several properties of slices and how they interact with each other.

\begin{lemma}\label{lem:slice_intersection}
The slice of $H(d,n)$ corresponding to $(v_1, \dots, v_d)$ contains the slice corresponding to $(w_1, \dots, w_d)$ if and only if $v_i \in \{w_i, \ast\}$ for all $i$.
\end{lemma}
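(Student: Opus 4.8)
The plan is to unwind the definition of a slice and then verify the biconditional one coordinate at a time. Write $S_v$ for the slice corresponding to $(v_1,\dots,v_d)$ and $S_w$ for the slice corresponding to $(w_1,\dots,w_d)$. The key observation, read directly off the definition, is that slice-membership is controlled entirely by the non-$\ast$ coordinates of the defining tuple: a vertex $(x_1,\dots,x_d)$ lies in $S_v$ precisely when $x_i = v_i$ for every index $i$ with $v_i \neq \ast$. Note also that every slice is nonempty, since each non-$\ast$ entry of a tuple is a legitimate value in $\{1,\dots,n\}$ and the $\ast$-coordinates may be filled arbitrarily; this frees us from worrying about vacuous containments. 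I would then prove the two implications separately.

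For the ``if'' direction, I would assume $v_i \in \{w_i,\ast\}$ for all $i$ and show $S_w \subseteq S_v$. Taking an arbitrary $x = (x_1,\dots,x_d) \in S_w$, it suffices to check $x_i = v_i$ at each coordinate $i$ for which $v_i \neq \ast$. For such an $i$, the hypothesis $v_i \in \{w_i,\ast\}$ combined with $v_i \neq \ast$ forces $v_i = w_i$, and in particular $w_i \neq \ast$; hence membership of $x$ in $S_w$ yields $x_i = w_i = v_i$, exactly as required. This direction goes through for every value of $n$.

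For the ``only if'' direction I would argue by contraposition: assuming some coordinate $i$ satisfies $v_i \neq \ast$ and $v_i \neq w_i$, I would exhibit a vertex $x \in S_w \setminus S_v$. Define $x$ by setting $x_j = w_j$ for every $j$ with $w_j \neq \ast$ and filling the remaining (free) coordinates with any admissible values; any such $x$ automatically lies in $S_w$. It remains only to arrange $x_i \neq v_i$, which forces $x \notin S_v$ because $v_i \neq \ast$. If $w_i \neq \ast$, then $x_i = w_i \neq v_i$ holds by assumption. If instead $w_i = \ast$, then coordinate $i$ is free and I would choose $x_i$ to be any element of $\{1,\dots,n\} \setminus \{v_i\}$. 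In either case $x \in S_w$ but $x \notin S_v$, so $S_w \not\subseteq S_v$, completing the contrapositive.

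This lemma is essentially a bookkeeping exercise, so there is no substantive obstacle; the only points demanding care are keeping straight which tuple controls which coordinate constraint, and the mild edge case in the subcase $w_i = \ast$. There, producing a coordinate value distinct from $v_i$ requires $\{1,\dots,n\} \setminus \{v_i\}$ to be nonempty, i.e.\ $n \geq 2$ (and indeed $n \geq 3$ throughout this section); the ``only if'' direction genuinely degenerates when $n = 1$, since then every slice collapses to the single vertex $(1,\dots,1)$ and containment no longer detects the coordinatewise condition.
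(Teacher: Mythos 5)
Your proof is correct and takes essentially the same approach as the paper's: a direct coordinatewise verification for the ``if'' direction, and an explicit witness vertex in $S_w \setminus S_v$ (split into the cases $w_i \neq \ast$ and $w_i = \ast$) for the ``only if'' direction. Your observation that the second direction degenerates when $n=1$ is a nice catch --- the paper's own witness, which takes $x_i = v_i + 1 \pmod{n}$, silently requires $n \geq 2$ as well --- but it does not alter the substance of the argument.
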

\begin{proof}
Let $S$ denote the slice corresponding to $(v_1, \dots, v_d)$, let $T$ denote the slice corresponding to $(w_1, \dots, w_d)$, and suppose that $S$ contains $T$.  It will be useful to note that by definition of $T$, the vertex $(x_1, \dots, x_d)$ belongs to $T$ if and only if in each coordinate, either $w_i = \ast$ or $w_i = x_i$.

For the reverse direction of the lemma, suppose that $v_i \in \{w_i, \ast\}$ for all $i$.  It follows that $v_i \in \{w_i, \ast\} \subseteq \{x_i, \ast\}$, hence $(x_1, \dots, x_d)$ belongs to $S$ as well.  For the forward direction, suppose for the sake of contradiction that for some $i$ we have $v_i \not \in \{w_i, \ast\}$.  If $w_i \not = \ast$, then let $x_j = 1$ if $w_j = \ast$ and let $x_j = w_j$ otherwise; now $v_i \not \in \{w_i, \ast\}$, hence the vertex $(x_1, \dots, x_d)$ belongs to $T$ but not to $S$, contradicting the assumption that $S$ contains $T$.  Similarly, if $w_i = \ast$, then let $x_i = v_i + 1 (\textrm{mod }n)$, let $x_j = 1$ if $j \not = 1$ and $w_j = \ast$, and let $x_j = w_j$ otherwise; once again, the vertex $(x_1, \dots, x_d)$ belongs to $T$ but not to $S$.
\end{proof}

We say that a cop \textit{sees} a slice (or \textit{sees all of} a slice) if that cop simultaneously sees every vertex of that slice.

\begin{lemma}\label{lem:slice_properties}
Consider the $(d-1)$-visibility game on $H(d,n)$ with $k$ cops, where $n \ge 2k+1$.  Let $S$ represent an arbitrary $m$-slice of $H(d,n)$ for some $m \in \{0, \dots, d-1\}$.
\begin{enumerate}
\item [\textbf{(1)}] If $m \ge 1$, then at any given point in the game, a given cop either sees all of $S$ or sees exactly one $(m-1)$-slice in each parallel class of $S$.  
\item [\textbf{(2)}] If $T$ and $T'$ are distinct $(m+1)$-slices containing $S$, then $T \cap T' = S$.  Moreover, if no cop sees either $T$ or $T'$, then no cop sees $S$.
\item [\textbf{(3)}] Let $T$ be an $(m+1)$-slice that contains $S$.  If $T$ is good prior to the recontamination phase of some round $r$, and if no cop currently sees all of $S$, then $S$ will be very good prior to the cop move phase of round $r+1$. 
\item [\textbf{(4)}] If $S$ is very good prior to the cop move phase of some round and no cop sees all of $S$ after the cop move, then $S$ will be good after the cop move.
\end{enumerate}
\end{lemma}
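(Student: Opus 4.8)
The plan is to build all four parts on a single structural observation: in the $(d-1)$-visibility game two vertices lie within distance $d-1$ exactly when they agree in at least one coordinate, so a cop sees a vertex if and only if the cop's position shares some coordinate value with it. I would first record the consequence that a cop sees \emph{all} of an $m$-slice $S$ if and only if the cop agrees with $S$ in one of its fixed (non-$\ast$) coordinates: agreement in a fixed coordinate reveals every vertex of $S$, while if the cop agrees with $S$ in no fixed coordinate then, using $n \ge 2$, one may choose a vertex of $S$ that disagrees with the cop in every star coordinate as well, and that vertex is unseen. Parts (1) and (2) follow quickly from this. For (1), a cop either agrees with a fixed coordinate of $S$ (and sees all of $S$) or it does not, in which case within each parallel class along a star coordinate $j$ it fully sees exactly the sub-slice whose $j$-value equals the cop's $j$-coordinate, and no other. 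For (2), $T \cap T' = S$ is coordinate bookkeeping via Lemma \ref{lem:slice_intersection} ($T$ and $T'$ convert two distinct fixed coordinates of $S$ into stars, so their common refinement is $S$); and if some cop saw all of $S$ it would agree in a fixed coordinate $p$ of $S$, and since $T$ and $T'$ free different coordinates, at least one of them still fixes $p$, so the cop would see all of that slice, a contradiction.

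For (4) I would induct on $m$. The base case $m=0$ is immediate: a very good $0$-slice is a dirty vertex, a cop move can clean a vertex only by seeing it, and the hypothesis that no cop sees $S$ after the move keeps it dirty. For the inductive step, fix a parallel class of $S$ along a star coordinate $j$; by hypothesis at least $mk+1$ of its sub-slices $R_a$ are very good before the move. After the move a cop sees all of $R_a$ only by agreeing with $R_a$ in a fixed coordinate; agreement in a fixed coordinate of $S$ is excluded (no cop sees all of $S$), so it must agree in coordinate $j$, and each cop does this for at most one value $a$. Hence at most $k$ of the $R_a$ are fully seen after the move, and the remaining $\ge (m-1)k+1$ very good sub-slices are, by the inductive hypothesis, good after the move; since $j$ was arbitrary, this gives the required count in every class.

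For (3), the crux, I would again induct on $m$, with base case $m=0$ handled as in the recontamination discussion: the good line $T$ has a dirty vertex, which is adjacent to $S$ and, since $S$ is unseen, forces $S$ dirty after recontamination. For the inductive step, let $i_0$ be the extra star coordinate of $T$ (so $S$ is the sub-slice of $T$ fixing $i_0$), and fix a star coordinate $j$ of $S$. Since $T$ is good, its parallel class along $j$ contains at least $mk+1$ good $m$-slices $U_a$ (fixing $j=a$), each of which contains the sub-slice $R_a$ of $S$ obtained by fixing $j=a$. The decisive point is that for every good $U_a$, no cop sees all of $R_a$: if some cop did, then—because no cop sees all of $S$—it could only be agreeing with $R_a$ in coordinate $j$, i.e. $c_j = a$, but then it would see all of $U_a$ as well, contradicting that $U_a$ is good (a fully seen slice is clean). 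Applying the inductive hypothesis of (3) at dimension $m-1$ to each pair $(U_a, R_a)$ makes every such $R_a$ very good after recontamination, producing at least $mk+1$ very good sub-slices in this class; as $j$ was arbitrary, $S$ is very good.

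The main obstacle is precisely this decisive point in (3). A naive count—pairing the $\ge mk+1$ good super-slices $U_a$ against the at most $k$ sub-slices $R_a$ that a cop might see—yields only $(m-1)k+1$ very good sub-slices, one factor of $k$ short of ``very good.'' The gap closes only upon realizing that a good $U_a$ can never have its distinguished sub-slice $R_a$ seen: seeing $R_a$, absent any cop seeing all of $S$, forces a cop into coordinate $j=a$, which reveals all of $U_a$ and so contradicts its goodness. This observation is exactly what lets recontamination upgrade ``good at dimension $m+1$'' to ``very good at dimension $m$,'' and getting it right is the heart of the lemma; parts (1), (2), and (4), by contrast, reduce to the seeing characterization and a one-cop-kills-at-most-one-sub-slice count.
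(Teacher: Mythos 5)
Your proposal is correct and takes essentially the same route as the paper's proof: the coordinate-agreement characterization of sight underlies (1) and (2), and (3) and (4) are proved by induction on $m$, with the crux of (3) being that a good super-slice cannot be fully seen, so no cop can see the sub-slices $R_a$ given that no cop sees $S$. Your ``decisive point'' is precisely the paper's combination of part (2) with the observation that good slices are unseen -- you merely re-derive that instance of (2) inline via coordinates instead of citing it.
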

\begin{proof}
By symmetry, it suffices to let $S$ be the $m$-slice corresponding to the $d$-tuple whose first $m$ coordinates are $\ast$ and whose last $d-m$ coordinates are 1.\\ 

\textbf{For (1)}, note that a cop $C$ at vertex $(x_1, \dots, x_d)$ sees all vertices $(y_1, \dots, y_d)$ where $x_i = y_i$ for at least one $i$ -- that is, $C$ sees the $(d-1)$-slices corresponding to $(x_1, \ast, \dots, \ast)$, $(\ast, x_2, \ast, \dots, \ast), \dots, (\ast, \dots, \ast, x_d)$.  Thus, by Lemma \ref{lem:slice_intersection}, $C$ sees all of $S$ provided that $x_i = 1$ for some $i \in \{m+1, \dots, d\}$; otherwise, $C$ sees the $(m-1)$-slices corresponding to $(x_1, \ast, \dots, \ast, 1, \dots, 1), (\ast, x_2, \dots, \ast, 1, \dots, 1), \dots, (\ast, \ast, \dots, x_m, 1, \dots, 1)$.  Each of these $(m-1)$-slices corresponds to a choice of value for one of the $m$ coordinates in which $(y_1, \dots, y_d)$ was $\ast$; in other words, it corresponds to an $(m-1)$-slice in one of the $m$ parallel classes in $S$.\\

\textbf{For (2)}, first note that by Lemma \ref{lem:slice_intersection}, both $T$ and $T'$ must correspond to $d$-tuples in which the first $m$ coordinates are $\ast$ and every other coordinate is either $\ast$ or 1.  Since both $T$ and $T'$ are $(m+1)$-slices, in each case exactly one of the last $d-m$ coordinates must be $\ast$ and the rest must be 1.  Hence, we may suppose by symmetry that $T$ corresponds to the $d$-tuple whose first $m+1$ coordinates are $\ast$ and whose last $d-m-1$ coordinates are $1$, while $T'$ corresponds to the $d$-tuple in which the first $m$ coordinates are $\ast$, coordinate $m+2$ is $\ast$, and the remaining coordinates are all 1.  It now follows from Lemma \ref{lem:slice_intersection} that $T \cap T'$ corresponds to the $d$-tuple in which the first $m$ coordinates are $\ast$ and the remaining coordinates are all 1, i.e. $T \cap T' = S$.   

Now suppose that no cop sees either $T$ or $T'$.  A cop that does not see $T$ must not have any 1s among its last $d-m-1$ coordinates; a cop that does not see $T'$ must not have any 1s in coordinate $m+1$ or in its last $d-m-2$ coordinates.  Hence, a cop that sees neither $T$ nor $T'$ must not have any 1s among its last $d-m$ coordinates and hence cannot see $S$.\\

\textbf{For (3)}, we use induction on $m$.  For the base case, suppose $m=0$.  Note that $S$ is a single vertex and $T$ is a 1-slice, i.e. $T$ is a clique.  If $T$ is good prior to the recontamination phase of round $r$, then it must have at least one good 0-slice at that point in time.  During the recontamination phase, contamination will spread from this good 0-slice to all other 0-slices in $T$.  In particular, $S$ will be dirty after recontamination; if no cop sees it, then it will remain dirty (and hence very good) prior to the cop move in round $r+1$.  

Suppose then that $m \ge 1$.  Let $\mathcal{C}_S$ be an arbitrary parallel class of $(m-1)$-slices in $S$; to show that $S$ will be very good prior to the cop move phase of round $r+1$, we aim to show that $\mathcal{C}_S$ will contain at least $mk+1$ very good $(m-1)$-slices at that time.  By symmetry, we may suppose that $T$ corresponds to the $d$-tuple whose first $m+1$ coordinates are $\ast$ and whose remaining coordinates are 1, and that $\mathcal{C}_S$ is the parallel class of $(m-1)$-slices in $S$ corresponding to a choice of value for the $\ast$ in coordinate $m$. Let $\mathcal{C}_T$ be the parallel class of $m$-slices in $T$ corresponding to a choice of value for the $\ast$ in coordinate $m$, and note that for every $T' \in \mathcal{C}_T$ we have $S \cap T' \in \mathcal{C}_S$.

Since $T$ was good prior to the recontamination phase of round $r$, at that time it had at least $mk+1$ good $m$-slices in each parallel class; in particular, $\mathcal{C}_T$ had at least $mk+1$ good $m$-slices, say $T_1, \dots, T_{mk+1}$.  For $1 \le i \le mk+1$, let $S_i = S \cap T_i$.  Note that the $S_i$ all belong to $\mathcal{C}_S$ and that each $S_i$ is contained within the good $m$-slice $T_i$.   Moreover, since the $T_i$ were good prior to recontamination in round $r$, no cop can have seen any $T_i$ during round $r$ -- and hence no cop sees any $T_i$ prior to the cop move phase of round $r+1$.  By assumption, no cop sees $S$ prior to the cop move phase of round $r+1$; thus, it follows from property (2) that no cop sees any $S_i$ prior to the cop move phase of round $r+1$.  By induction, since the $S_i$ were contained within good $m$-slices prior to the recontamination phase of round $r$, and no cop sees any $S_i$ prior to the cop move phase of round $r+1$, the $S_i$ will be very good prior to the cop move phase of round $r+1$; hence at that point in time $\mathcal{C}_S$ will contain at least $mk+1$ very good $(m-1)$-slices, as claimed.\\

\textbf{For (4)}, we again use induction on $m$.  When $m=0$, the claim is trivial: $S$ is a single vertex; if $S$ is very good prior to a cop move, then it is dirty and will remain that way after the cop move, provided no cop moves to see it.

Suppose then that $m \ge 1$.  Fix a parallel class $\mathcal{C}_S$ of $(m-1)$-slices in $S$; we must show that $\mathcal{C}_S$ has at least $(m-1)k+1$ good slices after the cop move, provided that no cop sees all of $S$.  Since $S$ is very good, $\mathcal{C}_S$ must have at least $mk+1$ very good $(m-1)$-slices, say $S_1, \dots, S_{mk+1}$.  By the induction hypothesis, each $S_i$ will remain good unless some cop sees it after the cop move.  However, by property (1), since no cop sees all of $S$ after the cop move, each cop will only see one $(m-1)$-slice from each parallel class in $S$; in particular, each cop will see at most one $S_i$.  Hence at most $k$ of the $S_i$ can fail to be good after the cop move, so at least $(m-1)k+1$ of them will still be good, as claimed. 
\end{proof}

We are now ready to establish a general lower bound on $c'_{d-1}(H(d,n))$.

\begin{theorem}\label{thm:Gnd_lower}
For positive integers $n$ and $d$ with $n \ge 3$, we have $c'_{d-1}(H(d,n)) \ge \ceil{n/(d+1)}$.  
\end{theorem}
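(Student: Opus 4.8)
The plan is to argue in the standard ``localization'' framework: I will show that if $k \le \ceil{n/(d+1)} - 1$ cops play the $(d-1)$-visibility game on $H(d,n)$, then at every moment of the game there remains at least one dirty vertex, so the cops can never clean the whole graph and hence can never guarantee seeing the robber. Note first that $k \le \ceil{n/(d+1)} - 1$ forces $(d+1)k < n$, hence $n \ge (d+1)k+1$ and $n-k \ge dk+1$; in particular the hypothesis $n \ge 2k+1$ of Lemma \ref{lem:slice_properties} holds (I take $d \ge 2$ here, the case $d=1$ being Observation \ref{obs:simple}), and I will have exactly the slack needed for the counting below.

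The heart of the argument is a single invariant: \emph{immediately before every cop move, the whole graph $H(d,n)$ is very good}, i.e.\ each of its $d$ parallel classes of $(d-1)$-slices contains at least $dk+1$ very good $(d-1)$-slices. This suffices to finish, because unwinding the definition shows that a very good $d$-slice contains a very good $(d-1)$-slice, which contains a very good $(d-2)$-slice, and so on down to a very good $0$-slice, that is, a dirty vertex; the same unwinding shows that a merely \emph{good} $d$-slice also contains a dirty vertex. Thus if I can show that $H(d,n)$ is very good before each cop move and good after each cop move, then a dirty vertex survives at every moment (the dirty set is smallest right after a cop move), so the cops never clean the whole graph.

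To establish the invariant I induct on the rounds. For the base case, immediately before the cops' first move the dirty set is exactly the set of vertices no cop sees; since each cop fully sees at most one $(d-1)$-slice in each parallel class, at most $k$ slices per class are fully seen, leaving at least $n-k \ge dk+1$ fully dirty---hence very good---$(d-1)$-slices per class, so $H(d,n)$ is very good. For the inductive step, suppose $H(d,n)$ is very good just before the cop move of round $r$. After the cop move, at most $k$ of the $\ge dk+1$ very good slices in each class are fully seen, so Lemma \ref{lem:slice_properties}(4) applies to the remaining $\ge dk+1-k=(d-1)k+1$ of them, which therefore become good; hence $H(d,n)$ is \emph{good} immediately after the cop move. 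During the ensuing recontamination phase I invoke Lemma \ref{lem:slice_properties}(3) with $T=H(d,n)$: since $T$ is good prior to recontamination, \emph{every} $(d-1)$-slice not fully seen by a cop becomes very good before the next cop move. As at most $k$ slices per class are fully seen, this yields at least $n-k \ge dk+1$ very good $(d-1)$-slices per class, restoring the invariant.

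The step I expect to be the crux---and the one most prone to a miscount---is the recontamination step. A naive reading that tracks only the $(d-1)k+1$ \emph{good} slices surviving the cop move would lose $k$ slices per round and collapse after $O(d)$ rounds. The resolution is that the hypothesis of Lemma \ref{lem:slice_properties}(3) asks only that the \emph{containing} slice $T$ be good, and its conclusion promotes \emph{every} unseen subslice $S \subseteq T$ to very good; so the single good graph $T$ recontaminates all but the $\le k$ fully-seen $(d-1)$-slices back up to the full count $n-k$, which is precisely where $n \ge (d+1)k+1$ is used. Once this is recognized the invariant closes on itself, and the theorem follows, since no $k < \ceil{n/(d+1)}$ allows the cops to clean $H(d,n)$.
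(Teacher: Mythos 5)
Your proof is correct and rests on the same machinery as the paper's: both arguments cycle Lemma \ref{lem:slice_properties}(3) and (4) through the rounds with the identical threshold $n \ge (d+1)k+1$, alternating between ``$H(d,n)$ is good prior to recontamination'' and ``every unseen $(d-1)$-slice is very good before the next cop move'' so that a dirty vertex survives every cop turn. The only difference is bookkeeping: the paper's stated invariant tracks, for every dimension $m$, all $m$-slices unseen during the last $d-m$ rounds, but only the $m=d-1$ level and the graph level are ever actually consumed there, so your two-phase, top-level-only invariant is a streamlined rendering of the same argument rather than a genuinely different route.
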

\begin{proof}
Let $G = H(d,n)$ and let $k = \floor{(n-1)/(d+1)}$; we aim to show that $k$ cops cannot capture a robber on $G$.  Toward this end, we claim that for all $m \in \{0, \dots, d-1\}$ and all positive integers $r$, immediately prior to the recontamination phase of round $r$, every $m$-slice that was not seen during rounds $r, r-1, \dots, r-d+m+1$ is good; we additionally claim that $G$ itself (viewed as a $d$-slice) is good prior to the recontamination phase of every round.  Suppose otherwise, and let $r$ denote the first round in which, immediately preceding the recontamination phase, either $G$ itself was not good, or some $m$-slice $S$ was not good despite not being seen in rounds $r, r-1, \dots, r-d+m+1$.

Suppose first that for some $m \in \{0, \dots, d-1\}$, some $m$-slice $S$ was not good despite not being seen in rounds $r, r-1, \dots, r-d+m+1$.  Let $T$ be any $(m+1)$-slice containing $S$. Since $S$ was not seen during rounds $r, r-1, \dots, r-d+m+1$, neither was $T$.  Consequently, by choice of $r$, $T$ must have been good prior to the recontamination phase of round $r-1$.  Since $T$ was good before recontamination in round $r-1$, by Lemma \ref{lem:slice_properties} (3), $S$ was very good immediately prior to the cop move phase of round $r$; by \ref{lem:slice_properties} (4) and the assumption that $S$ is not seen in round $r$, it will remain good prior to the recontamination phase of round $r$, contradicting the choice of $S$.  (Note that $k = \floor{(n-1)/(d+1)}$ implies $n \ge (d+1)k+1 \ge 2k+1$, so the hypotheses of Lemma \ref{lem:slice_properties} are satisfied.)

Suppose instead that, immediately prior to the recontamination phase of round $r$, for all $m \in \{0, \dots, d-1\}$, every $m$-slice that was not seen in rounds $r, r-1, \dots, r-d+m+1$ was good, but $G$ itself was not good.  Let $\mathcal{C}$ be an arbitrary parallel class of $(d-1)$-slices in $G$.  By choice of $r$, every $(d-1)$-slice in $\mathcal{C}$ that was not seen during round $r$ was good prior to the recontamination phase.  Each cop sees at most one $(d-1)$-slice in $\mathcal{C}$ at the beginning of round $r$ and can move to see one more.  Thus, at most $2k$ of the $(d-1)$-slices in $\mathcal{C}$ are seen during round $r$; hence, at least $n-2k$ of them are still good prior to the recontamination phase.  Since $k = \floor{(n-1)/(d+1)}$, we have $n > k(d+1)$, hence $n-2k > k(d+1)-2k = (d-1)k$, so $\mathcal{C}$ has at least $(d-1)k+1$ good $(d-1)$-slices; it follows that $G$ itself is good.

Since $G$ is good prior to the recontamination phase of every round, it must always contain dirty vertices, so the cops are unable to see the robber.
\end{proof}

It follows from Theorem \ref{thm:Gnd_lower}, Theorem \ref{thm:clique_product_search}, and Proposition \ref{prop:Gnd_upper} that for all positive integers $n$ and $d$ with $n \ge 3$ and $d \ge 2$, we have
\[\ceil{\frac{n}{d+1}} \le c'_{d-1}(H(d,n)) \le \ceil{\frac{n+1}{3}}.\]
As noted after Proposition \ref{prop:Gnd_upper}, the argument used to establish the upper bound therein is somewhat wasteful, in that the argument does not take full advantage of the cops' visibility.  Consequently, we believe that the lower bound above should be closer to the true value of $c'_{d-1}(H(d,n))$; indeed, we believe that the lower bound is asymptotically tight.

\begin{conj}\label{conj:Gnd}
For any fixed positive integer $d$, we have $c'_{d-1}(H(d,n)) = (1+o(1))\frac{n}{d+1}$.
\end{conj}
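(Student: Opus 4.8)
Since Theorem~\ref{thm:Gnd_lower} already supplies the lower bound $c'_{d-1}(H(d,n)) \ge \lceil n/(d+1) \rceil$, proving the conjecture reduces entirely to the matching upper bound $c'_{d-1}(H(d,n)) \le (1+o(1))\frac{n}{d+1}$. The plan is to exhibit, for each fixed $d$, an explicit search strategy for $(1+o(1))\frac{n}{d+1}$ cops that generalizes the expanding-clean-region argument behind the tight $d=2$ bound of Theorem~\ref{thm:clique_product_search}. The fact that the target constant is exactly $1/(d+1)$ — rather than the $1/3$ produced by the wasteful strategy of Corollary~\ref{cor:Gnd_lower_visibility_upper_bound} — means the strategy must use recontamination essentially optimally, so almost all of the difficulty lies in controlling recontamination once $d \ge 3$.

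The object I would track is the family of \emph{slabs}: for each direction $i$, let $A_i$ be the set of values $c$ for which some cop currently occupies a vertex with $i$th coordinate equal to $c$, so that the cops see the entire hyperplane $\{y : y_i = c\}$. Working in the modified recontamination formulation of Theorem~\ref{thm:clique_product_search} (where every neighbor of a dirty vertex is recontaminated and seen vertices are recleaned the next round), a single recontamination step under a fixed slab configuration reduces the clean set to exactly those vertices lying in at least two clean slabs, namely $\{y : |\{\, i : y_i \in A_i \,\}| \ge 2\}$; for $d = 2$ this is precisely the rectangular collapse to $S_{a,b}$ exploited in Theorem~\ref{thm:clique_product_search}. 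Because a single cop simultaneously contributes one clean hyperplane in \emph{every} direction, $k$ cops can realize $|A_i| = k$ in all $d$ directions at once (for instance via a diagonal placement), and the plan is to slide and refresh these slabs so that a growing set of \emph{secured} vertices — those permanently pinned inside the doubly covered region — expands in every direction. Tracking the dimensions $(a_1, \dots, a_d)$ of the secured box, I would argue that one epoch of slab manipulation strictly increases some $a_i$ precisely when $n - 2k \le (d-1)k$, i.e. when $k \ge n/(d+1)$; this exactly mirrors the threshold that powers the lower bound of Theorem~\ref{thm:Gnd_lower}, and iterating would drive every $a_i$ up to $n$ and clean the graph.

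The main obstacle is persistence in dimension $d \ge 3$. When $d = 2$, ``at least two clean coordinates'' coincides with the box $S_{a,b}$, so once a rectangle is cleaned it is stable; but for $d \ge 3$ the stable region $\{y : \text{at least two coordinates clean}\}$ is not a box, and a box point can easily drop to a single clean coordinate — and hence be recontaminated — as soon as a slab is slid out of some $A_i$. The crux is therefore to schedule the $\approx n/(d+1)$ cops so that every already-secured vertex retains two clean coordinates throughout each transition, while keeping $|A_i|$ as small as $(1+o(1))n/(d+1)$ in every direction simultaneously; note that even securing a single full hyperplane already forces a covering condition on the remaining slabs, so the directions cannot be secured independently. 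I expect this to require an epoch-based schedule — clean a batch of parallel hyperplanes, pin each into the doubly covered region by pairing it with a persistent second clean direction, then release and reuse the cops — together with an amortized count showing that the recontamination incurred per epoch is dominated by the progress made exactly at $k = (1+o(1))n/(d+1)$. Quantitatively I anticipate this bookkeeping to run along the good/very-good thresholds $(m-1)k$ and $mk$ from Lemma~\ref{lem:slice_properties}, with the upper-bound argument essentially reversing that inductive analysis; verifying that these thresholds are \emph{simultaneously} attainable by a genuine cop strategy, rather than merely consistent with the counting, is the heart of the matter.
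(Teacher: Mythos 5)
The statement you were asked to prove is Conjecture~\ref{conj:Gnd}. The paper itself offers no proof of it and, in its closing questions, explicitly lists the matching upper bound as open (``a general upper bound eludes us''); Theorem~\ref{thm:Gnd_lower} gives only the lower bound, and Corollary~\ref{cor:Gnd_lower_visibility_upper_bound} only an upper bound of the wrong constant. So there is nothing in the paper to compare your argument against, and your proposal must stand on its own as a proof. It does not: it is a correct reduction plus a research plan.

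The sound parts are the reduction (the lower bound is indeed Theorem~\ref{thm:Gnd_lower}, so only the upper bound is at issue) and the local slab calculus: in the modified recontamination convention of Theorem~\ref{thm:clique_product_search}, if the clean set equals the seen set $\{y : \exists i,\ y_i \in A_i\}$, then one recontamination step does collapse it to $\{y : |\{\, i : y_i \in A_i \,\}| \ge 2\}$, which for $d=2$ recovers the box $S_{a,b}$. Everything after that is asserted rather than proved. The claim that an ``epoch of slab manipulation'' strictly enlarges the secured region exactly when $k \ge n/(d+1)$ \emph{is} the entire theorem, and you give no construction: no specification of which cops move where, no verification that every secured vertex retains two clean coordinates across a transition (the failure mode you yourself identify for $d \ge 3$), and no amortized count. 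Moreover, the threshold $k \ge n/(d+1)$ is extracted from the lower-bound machinery (Theorem~\ref{thm:Gnd_lower} and the thresholds of Lemma~\ref{lem:slice_properties}), which shows only that fewer cops \emph{cannot} win; neither that argument nor yours shows that marginally more cops \emph{can}. That inference happens to be valid at $d=2$ only because Theorem~\ref{thm:clique_product_search} supplies an explicit strategy achieving the threshold, and constructing its analogue for $d \ge 3$ is precisely the step you flag as ``the heart of the matter.'' Until that strategy is exhibited and verified, what you have is a plausible plan of attack on an open conjecture, not a proof.
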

\,
\end{section}

\begin{section}{Consequences, Conclusions, and Questions}\label{sec:conclusions}

In this section, we use the results of Sections \ref{sec:dimension_2} and \ref{sec:high_dimension} to resolve two of the open problems posed by Clarke et al. in \cite{CCDDFM20} and to partially answer a third.  We begin with the following question:

\begin{problem}[\cite{CCDDFM20}, Problem 5.3]
For $\ell \ge 1$, can $\lcap(G) - \lloc(G)$ be arbitrarily large?
\end{problem}

We answer this question in the affirmative.  We remark that this question was answered independently and contemporaneously by Ba\v{s}i\'c, Davies, D\v{z}uklevski, Gvozdi\'c, and Mogge \cite{Dav25}.

\begin{theorem}
$\lcap(G) - \lloc(G)$ can be arbitrarily large for every $\ell \ge 1$.
\end{theorem}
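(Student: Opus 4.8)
The plan is to treat the regimes $\ell = 1$ and $\ell \ge 2$ separately, since the mechanism producing a large gap is quite different in each. For $\ell = 1$ the two-dimensional Hamming graphs already analyzed above do the job: taking $G = H(2,n)$, Theorems \ref{thm:clique_product_search} and \ref{thm:clique_product_capture} give $\search_1(G) = \ceil{(n+1)/3}$ and $\capture_1(G) = \ceil{(n+1)/2}$, so
$$\capture_1(G) - \search_1(G) = \ceil{\tfrac{n+1}{2}} - \ceil{\tfrac{n+1}{3}} = \tfrac{n}{6} + O(1),$$
which tends to infinity with $n$. Thus no further work is needed when $\ell = 1$.

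For $\ell \ge 2$ I would instead use graphs of small diameter, exploiting the fact that sufficiently low diameter forces the $\ell$-visibility game to coincide with the perfect-information game. Specifically, suppose $G$ has diameter at most $2 \le \ell$. Then a single cop, wherever she stands, lies within distance $\ell$ of every vertex and hence always sees the robber; consequently $\lloc(G) = 1$, and since the cops always know the robber's position the $\ell$-visibility game is identical to ordinary Cops and Robbers (the finite-capture requirement is automatic, as the perfect-information cop number is always achieved in boundedly many rounds). Therefore $\lcap(G) = c(G)$ and
$$\lcap(G) - \lloc(G) = c(G) - 1.$$
(Theorem \ref{thm:seeing_plus_one} yields only the weaker bound $\lcap(G) \le c(G)+1$ here, but the direct argument gives equality.) It then remains to supply, for each target $M$, a diameter-$2$ graph $G$ with $c(G) \ge M+1$. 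Such graphs are known to exist: the maximum cop number over diameter-$2$ graphs on $n$ vertices grows without bound — for instance, dense random graphs have diameter $2$ with high probability while having cop number tending to infinity, and explicit incidence-type constructions attain cop number $\Theta(\sqrt{n})$. Choosing $n$ large then makes $c(G)-1$ exceed $M$, settling the case $\ell \ge 2$.

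The $\ell = 1$ case is immediate from the theorems just proved, and the reduction for $\ell \ge 2$ — that diameter at most $\ell$ collapses the game to perfect information — is elementary. The one genuinely external ingredient, and hence the main point to get right, is the existence of bounded-diameter graphs with unbounded classical cop number; here it is worth noting that Hamming graphs cannot serve this role, since $c(H(d,n)) = d$ is governed by the very same parameter $d$ that equals the diameter, so one must go outside the families studied in this paper.
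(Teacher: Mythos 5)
Your proof is correct, and for $\ell = 1$ it is exactly the paper's argument: $G = H(2,n)$ with Theorems \ref{thm:clique_product_search} and \ref{thm:clique_product_capture} giving a gap of $\ceil{\frac{n+1}{2}} - \ceil{\frac{n+1}{3}}$. For $\ell \ge 2$, however, you take a genuinely different route. The paper uses incidence graphs of projective planes of order $q$: there $\lcap(G) \ge c(G) = q+1$ (citing Pra\l{}at), while $\lloc(G) = 1$ because a single cop standing on a point vertex and then moving to a line vertex sees every vertex across those two turns \emph{even though the graph has diameter 3}, i.e.\ strictly larger than $\ell = 2$. Your reduction instead requires diameter at most $\ell$, which makes the collapse to the perfect-information game ($\lloc(G) = 1$, $\lcap(G) = c(G)$) completely elementary, but then forces you to outsource the existence of diameter-2 graphs with unbounded cop number. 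That existence is true and known -- dense random graphs $G(n,\tfrac12)$ have diameter 2 and cop number $(1+o(1))\log_2 n$ asymptotically almost surely -- so your argument does go through. Two caveats: first, your parenthetical appeal to ``explicit incidence-type constructions'' is misplaced, since the incidence graph of a projective plane has diameter 3, not 2, and so does not satisfy your own hypothesis for $\ell = 2$ (this is precisely why the paper needs the cleverer two-vantage-point argument rather than a diameter bound); second, note that the standard girth-based (Aigner--Fromme-type) lower bounds on cop number cannot supply your examples either, since girth at least 5 together with diameter 2 characterizes Moore graphs, of which there are only finitely many -- so the random-graph citation is carrying the entire weight of the external ingredient, and should be stated as the ingredient rather than as one option among several. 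What each approach buys: the paper's construction is a single explicit family handling all $\ell \ge 2$ at once and illustrates that $\lloc(G) = 1$ does not require diameter $\le \ell$; yours trades that for a trivial reduction plus a well-known probabilistic existence result.
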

\begin{proof}
When $\ell \ge 2$, it suffices to consider incidence graphs of projective planes, i.e. bipartite graphs with one vertex for each point and one vertex for each line in a given projective plane, with a ``point vertex'' adjacent to a ``line vertex'' when the corresponding point belongs to the corresponding line.  It was shown in \cite{Pra10} that when $G$ is an incidence graph of a projective plane of order $q$, we have $c(G) = q+1$; consequently, $\lcap(G) \ge c(G) = q+1$.  On the other hand, in the $\ell$-visibility game with $\ell \ge 2$, a single cop who starts on any ``point vertex'' and moves to  any ``line vertex'' can, from those two vantage points, see all vertices of $G$; hence $\lloc(G) = 1$.  Thus we have $\lcap(G) - \lloc(G) \ge q$; since projective planes of arbitrarily large order exist, $\lcap(G) - \lloc(G)$ can be arbitrarily large.

For the case $\ell = 1$, consider $G = H(2,n)$.  Theorem \ref{thm:clique_product_search} shows that $c'_1(G) = \ceil{\frac{n+1}{3}}$, and Theorem \ref{thm:clique_product_capture} shows that $c_1(G) = \ceil{\frac{n+1}{2}}$; thus $c_1(G) - c'_1(G) = \ceil{\frac{n+1}{2}} - \ceil{\frac{n+1}{3}}$, which grows unboundedly with $n$.
\end{proof}

We next turn to Problem 5.5 from \cite{CCDDFM20}.

\begin{problem}[\cite{CCDDFM20}, Problem 5.5]
What is the closure of the set of possible values of $\dfrac{c_{\ell}(G)}{c_{\ell+1}(G)}$? 
\end{problem}

For any graph $G$ and nonnegative integer $\ell$ we clearly have $c_{\ell+1}(G) \le c_{\ell}(G)$, since the cops are at least as strong in the $(\ell+1)$-visibility game as in the $\ell$-visibility game.  Thus, the ratio $c_{\ell}(G) / c_{\ell+1}(G)$ is always at least 1.  We will show that in fact this ratio can take on any rational value that is at least 1, and hence that the closure of the set of possible values is the interval $[1, \infty)$.  

We begin with two helpful lemmas regarding the behavior of $\lcap(H(d,n))$ and $\lloc(H(d,n))$.

\begin{lemma}\label{lem:diff_at_most_one}
For any positive integers $n \ge 3$ and $d \ge 3$, we have $c'_{d-1}(H(d,n)) \le c'_{d-1}(H(d,n+1)) \le c'_{d-1}(H(d,n))+1$ and $c_{d-1}(H(d,n)) \le c_{d-1}(H(d,n+1)) \le c_{d-1}(H(d,n))+1$.
\end{lemma}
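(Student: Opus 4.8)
The plan is to prove the two lower inequalities by a single retraction argument and the two upper inequalities separately, handling the seeing parameter by a stationary-cop argument and then deducing the capture bound from Theorem~\ref{thm:seeing_plus_one}.

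For the lower bounds $c'_{d-1}(H(d,n)) \le c'_{d-1}(H(d,n+1))$ and $c_{d-1}(H(d,n)) \le c_{d-1}(H(d,n+1))$, I would observe that $H(d,n)$ is a retract of $H(d,n+1)$ under the coordinatewise map $\phi(x_1,\dots,x_d) = (\min\{x_1,n\},\dots,\min\{x_d,n\})$, the higher-dimensional analogue of the map used in the proof of Theorem~\ref{thm:clique_product_capture}. This map fixes every vertex of $H(d,n)$ and sends each edge of $H(d,n+1)$ to an edge or to a single vertex, so it is a retraction. Since $d-1 \ge 2 \ge 1$, both inequalities then follow immediately from Theorem~\ref{thm:monotone_retract}.

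The heart of the argument is the seeing upper bound $c'_{d-1}(H(d,n+1)) \le c'_{d-1}(H(d,n)) + 1$. Let $k = c'_{d-1}(H(d,n))$. I would station one extra cop permanently at the vertex $o = (n+1,\dots,n+1)$; in the $(d-1)$-visibility game this cop sees exactly those vertices agreeing with $o$ in at least one coordinate, i.e. precisely the ``new'' vertices having some coordinate equal to $n+1$, and so keeps that entire set clean for the whole game. The remaining $k$ cops confine themselves to the ``core'' induced by the coordinate values $\{1,\dots,n\}$, which is an isomorphic copy of $H(d,n)$, and there execute an optimal $H(d,n)$ search strategy. The key point to verify is that the contamination dynamics on the core are identical to those of a genuine game on $H(d,n)$: a core cop sees exactly the same core vertices as she would in $H(d,n)$ (those sharing one of her coordinates), and a core vertex can be recontaminated only through a dirty core neighbor, since all of its neighbors outside the core lie in the permanently clean new set. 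Hence the $k$ core cops drive the core's dirty set to empty exactly as in $H(d,n)$ while the extra cop keeps the new vertices clean, so together they clean all of $H(d,n+1)$ and thereby see the robber.

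For the capture upper bound $c_{d-1}(H(d,n+1)) \le c_{d-1}(H(d,n)) + 1$, I would avoid a direct pursuit strategy and instead combine the seeing bound just proved with Theorem~\ref{thm:seeing_plus_one}. Since $d \ge 3$ we have $d-1 \ge 2$, so Theorem~\ref{thm:seeing_plus_one} together with $c(H(d,n+1)) = d$ (Neufeld and Nowakowski \cite{NN98}) gives $c_{d-1}(H(d,n+1)) \le \max\{c'_{d-1}(H(d,n+1)),\, d+1\} \le \max\{c'_{d-1}(H(d,n))+1,\, d+1\}$. It then remains to check that each term of this maximum is at most $c_{d-1}(H(d,n))+1$: the first because $c'_{d-1}(H(d,n)) \le c_{d-1}(H(d,n))$, and the second because $c_{d-1}(H(d,n)) \ge c_d(H(d,n)) = c(H(d,n)) = d$, where the first inequality is the monotonicity of $c_\ell$ in $\ell$ noted before the lemma and the middle equality holds since $H(d,n)$ has diameter $d$. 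I expect the main obstacle to be the contamination-dynamics verification in the seeing bound, where one must argue carefully that the extra coordinate value $n+1$ neither opens new avenues of recontamination into the core nor changes which core vertices a core cop sees; the capture bound is then essentially free, the only subtlety being the observation that the reduced visibility forces $c_{d-1}(H(d,n)) \ge d$, exactly what is needed to absorb the $d+1$ term.
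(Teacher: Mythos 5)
Your proposal is correct and follows essentially the same route as the paper's proof: the retraction $\phi(x_1,\dots,x_d)=(\min\{x_1,n\},\dots,\min\{x_d,n\})$ with Theorem~\ref{thm:monotone_retract} for the lower bounds, a stationary cop at $(n+1,\dots,n+1)$ guarding all vertices with a coordinate equal to $n+1$ while the remaining cops run the optimal $H(d,n)$ search on the core for the seeing upper bound, and Theorem~\ref{thm:seeing_plus_one} together with $c(H(d,n+1))=d$ and $c_{d-1}(H(d,n))\ge d$ for the capture upper bound. Your extra verification of the contamination dynamics on the core is a more explicit version of what the paper asserts in one line, so there is nothing to correct.
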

\begin{proof}
It is straightforward to verify that $H(d,n)$ is a retract of $H(d,n+1)$  under the map $\phi : V(H(d,n+1)) \rightarrow V(H(d,n))$ defined by $\phi((x_1,\dots,x_d)) = (\min\{x_1,n\}, \dots, \min\{x_d,n\})$.  Thus, by Theorem \ref{thm:monotone_retract}, we have $c'_{d-1}(H(d,n)) \le c'_{d-1}(H(d,n+1))$ and $c_{d-1}(H(d,n)) \le c_{d-1}(H(d,n+1))$.  

To see that $c'_{d-1}(H(d,n+1)) \le c'_{d-1}(H(d,n))+1$, note that $c'_{d-1}(H(d,n))+1$ cops can locate a robber in the $(d-1)$-visibility game on $H(d,n+1)$ by positioning one cop at vertex $(n+1, \dots, n+1)$ and having the other cops execute a winning strategy for the $(d-1)$-visibility game on the copy of $H(d,n)$ induced by $\{(x_1, \dots, x_d) \in V(H(d,n+1)) \, : \, 1 \le x_i \le n \text{ for all } i\}$.  The cop on $(n+1, \dots, n+1)$ always sees all vertices with one or more coordinates equal to $n+1$, so those vertices will never be dirty; meanwhile, the remaining $\search_{d-1}(H(d,n))$ cops will clean the remainder of the graph.

Finally, we will argue that $c_{d-1}(H(d,n+1)) \le c_{d-1}(H(d,n))+1$.  By Theorem \ref{thm:seeing_plus_one} and the fact that $c(H(d,n+1)) = d$, we have, 
\begin{align*}
c_{d-1}(H(d,n+1)) &\le \max\{c'_{d-1}(H(d,n+1)), c(H(d,n+1))+1\}\\
                  &= \max\{c'_{d-1}(H(d,n+1)), d+1\}\\
                  &\le \max\{c'_{d-1}(H(d,n))+1, d+1\}\\
                  &\le \max\{c_{d-1}(H(d,n))+1, d+1\}.
\end{align*}
Since $c_{d-1}(H(d,n)) \ge c(H(d,n)) = d$, we have $\max\{c_{d-1}(H(d,n))+1, d+1\} = c_{d-1}(H(d,n))+1$, hence $c_{d-1}(H(d,n+1)) \le c_{d-1}(H(d,n))+1$ as claimed.
\end{proof}

\begin{lemma}\label{lem:product_of_cliques_any_value}
For any positive integers $d$ and $m$ with $d \ge 3$ and $m \ge d$, there exists some $n$ such that $c_{d-1}(H(d,n)) = m$.  
\end{lemma}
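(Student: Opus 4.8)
The plan is to treat $f(n) := c_{d-1}(H(d,n))$ as a function of $n$ (with $d \ge 3$ fixed) and apply a discrete intermediate-value argument. By Lemma \ref{lem:diff_at_most_one}, $f$ is nondecreasing and satisfies $f(n) \le f(n+1) \le f(n)+1$ for all $n \ge 3$, so as $n$ grows $f$ never skips an integer value. Moreover $f$ is unbounded: since capturing requires seeing, $c_{d-1}(H(d,n)) \ge c'_{d-1}(H(d,n))$, and Theorem \ref{thm:Gnd_lower} gives $c'_{d-1}(H(d,n)) \ge \ceil{n/(d+1)} \to \infty$. Consequently $\{f(n) : n \ge 3\}$ is exactly the set of integers at least $f(3)$, and the whole lemma reduces to the single computation $f(3) = d$; once that is in hand, every integer $m \ge d$ is attained by some $n$.

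For the lower bound $f(3) \ge d$, I would use that more visibility only helps the cops, so $c_{d-1}(H(d,3)) \ge c_d(H(d,3)) = c(H(d,3)) = d$ by the theorem of Neufeld and Nowakowski \cite{NN98} (here $\ell = d$ meets or exceeds the diameter). The real content of the base case is therefore the upper bound $f(3) \le d$: I must exhibit a strategy for $d$ cops to capture the robber on $H(d,3)$ in the $(d-1)$-visibility game. The guiding observation is that a cop sees the robber precisely when the two share at least one coordinate (their Hamming distance is then at most $d-1$), so the robber is hidden only when he differs from every cop in every coordinate.

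The plan for the capture is to reduce to the perfect-information game by guaranteeing perpetual visibility. Since $c'_{d-1}(H(d,3)) \le \ceil{4/3} = 2 \le d$, the cops can first locate the robber; at the first moment they see him, two designated cops move (in a single cop turn) to match the robber's first and second coordinates, respectively. From then on these two cops maintain their respective matches: on each turn the robber changes at most one coordinate, so at most one of the two matches is broken, the surviving match keeps the robber visible, and the broken match is repaired on the ensuing cop turn. Thus the robber can be kept visible forever, after which the game is informationally equivalent to the perfect-information game, in which $d$ cops suffice by \cite{NN98}.

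The main obstacle is that maintaining visibility must not cost an extra cop, since otherwise this argument would only yield the bound $d+1$ furnished by Theorem \ref{thm:seeing_plus_one}: the two shadowing cops must do double duty as capturing cops so that all $d$ cops remain available to effect the capture. I expect to resolve this by choosing the perfect-information strategy to be of product (``shadow'') type compatible with the decomposition $H(d,3) = H(2,3) \cart H(d-2,3)$. The two shadowing cops execute the two-cop capture of the $\{1,2\}$-projection exactly as in the $d=2$ analysis underlying Theorem \ref{thm:clique_product_capture}; this simultaneously maintains visibility throughout that phase and, upon completion, leaves a single cop matching both coordinates $1$ and $2$ (which alone keeps the robber visible, again because the robber can spoil at most one coordinate per move), while the remaining $d-2$ cops handle the $H(d-2,3)$-projection. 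Verifying that these two sub-strategies interleave into a genuine capture of the robber -- not merely of his projections -- using exactly $d$ cops is the crux, and is where the careful bookkeeping will reside.
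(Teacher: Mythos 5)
Your outer skeleton is exactly the paper's: combine Lemma \ref{lem:diff_at_most_one} (monotone, unit steps) with Theorem \ref{thm:Gnd_lower} (unboundedness) so that $c_{d-1}(H(d,n))$ hits every integer from its base value onward, reducing everything to one computation of the form $c_{d-1}(H(d,n_0)) = d$. Your lower bound for the base case is fine, and your visibility-maintenance device (two cops matching coordinates $1$ and $2$, later one cop matching both) is sound. But the upper bound $c_{d-1}(H(d,3)) \le d$ is the entire mathematical content of the lemma, and it is not established: the plan you sketch for it does not work. Capturing the $\{1,2\}$-projection with two cops and the $\{3,\dots,d\}$-projection with the remaining cops leaves one cop agreeing with the robber in coordinates $1,2$ and a \emph{different} cop agreeing in coordinates $3,\dots,d$; no single cop ever stands on the robber's vertex, and maintaining both shadows forever never converges to a capture. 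To finish, some one cop must extend her matched set to all $d$ coordinates, but each extension move can be answered by the robber breaking one of her matches, and a single robber move can simultaneously set back several cops who are responsible for the same coordinate. Ruling out such perpetual stalemates requires a global progress argument in which every robber move gives some cop irreversible net progress; that is the crux idea of any proof here, not ``careful bookkeeping,'' and the projection decomposition does not supply it.

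The paper sidesteps both of your difficulties at once by taking the base case $n = d$ rather than $n = 3$: starting the cops on $(1,1,\dots,1), (2,1,\dots,1), \dots, (d,1,\dots,1)$ makes the robber visible from his initial placement (his first coordinate must agree with some cop), so no location phase or shadow apparatus is needed, and then it runs a coupled coordinate-locking strategy: cop $C_i$ repeatedly fixes the first coordinate \emph{other than} $i$ in which she disagrees with the robber, saving coordinate $i$ for last. Since the robber's change of coordinate $i$ cannot damage $C_i$'s agreements elsewhere, every robber move yields strict net progress for some cop, no cop's agreement count ever decreases across a round (which is simultaneously what keeps the robber visible forever), and capture follows within $d(d-1)+1$ rounds. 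If you wish to keep the base case $n = 3$, the repair is to use your shadow cop solely to guarantee visibility and then run precisely this coordinate-locking strategy (as in Lemma \ref{lem:cop_number_hamming_protect}, with cyclic coordinate orders) with all $d$ cops, checking that the locking invariants preserve visibility; composing independent projection captures cannot replace it.
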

\begin{proof}
We first claim that $c_{d-1}(H(d,d)) = d$.  Neufeld and Nowakowski \cite{NN98} showed that $c(H(d,n)) = d$ whenever $n \ge 3$; consequently, $c_{d-1}(H(d,d)) \ge c(H(d,d)) = d$.  For the upper bound, we explain how $d$ cops can capture a robber on $H(d,d)$.  Label the cops $C_1, \dots, C_d$.  Initially, cop $C_i$ begins on vertex $(i,1,1,\dots,1)$.  Note that no matter where the robber begins, his position must agree in the first coordinate with one of the cops' positions, and hence the cops are guaranteed to see him.  Henceforth, on each turn, the cops move as follows: each cop $C_i$ chooses the first coordinate \textit{other than coordinate $i$} in which her position disagrees with the robber and changes it to agree with the robber; if she already agrees with the robber in all coordinates except coordinate $i$, then she can and does capture the robber.  Note that in the course of each round, the number of coordinates in which each cop agrees with the robber cannot decrease; moreover, if on the robber turn the robber changes coordinate $i$ of his current position, then in fact cop $C_i$ increases the number of coordinates in which she agrees with the robber.  Hence, the robber can only move in any given coordinate at most $d-1$ times before being captured, so after at most $d(d-1)+1$ rounds, the cops will win.

Thus, $c_{d-1}(H(d,n)) = d$ when $n=d$.  By Lemma \ref{lem:diff_at_most_one}, increasing $n$ by 1 can increase $c_{d-1}(H(d,n))$ by at most 1; by Theorem \ref{thm:Gnd_lower}, $c_{d-1}(H(d,n))$ grows arbitrarily large as $n$ increases.  Thus, as $n$ increases, $c_{d-1}(H(d,n))$ must take on every integer value from $d$ onward; in particular, there must be some value of $n$ such that $c_{d-1}(H(d,n)) = m$, as claimed.
\end{proof}

We are now ready to answer Problem 5.5 from \cite{CCDDFM20}.

\begin{theorem}
The closure of the set $\displaystyle \left \{\frac{c_{\ell}(G)}{c_{\ell+1}(G)} \, : \, \ell \in \mathbb{Z}, G \text{ a graph}\right \}$ is $[1, \infty)$.
\end{theorem}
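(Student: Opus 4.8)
The plan is to realize any prescribed ratio exactly using Hamming graphs at visibility one below their diameter. The key observation is that for $G = H(d,n)$ and $\ell = d-1$, the denominator $c_{\ell+1}(G) = c_d(H(d,n))$ is pinned down: since $H(d,n)$ has diameter $d$, visibility $d$ amounts to full visibility, so $c_d(H(d,n)) = c(H(d,n)) = d$ by the result of Neufeld and Nowakowski quoted at the start of Section \ref{sec:high_dimension}. Meanwhile, Lemma \ref{lem:product_of_cliques_any_value} tells us that the numerator $c_{d-1}(H(d,n))$ can be made equal to \emph{any} integer $m \ge d$ by a suitable choice of $n$. Hence, for every pair $(d,m)$ with $d \ge 3$ and $m \ge d$, there is a graph $G$ realizing $c_{d-1}(G)/c_d(G) = m/d$.

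First I would record that the ratio always lies in $[1,\infty)$: since the cops are at least as strong in the $(\ell+1)$-visibility game as in the $\ell$-visibility game, $c_{\ell+1}(G) \le c_\ell(G)$, so $c_\ell(G)/c_{\ell+1}(G) \ge 1$ for every $G$ and every $\ell$. It therefore suffices to show that every rational number in $[1,\infty)$ actually belongs to the set, since the rationals are dense in $[1,\infty)$ and the set is contained in $[1,\infty)$; taking closures then yields exactly $[1,\infty)$ (the set is moreover unbounded above, so no finite upper bound intervenes).

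The main step is to hit an arbitrary rational exactly. Given a rational target $\rho \ge 1$, write $\rho = a/b$ with integers $a \ge b \ge 1$. Choose $t$ large enough that $d := bt \ge 3$, and set $m := at$; then $m \ge d$ because $a \ge b$, so Lemma \ref{lem:product_of_cliques_any_value} furnishes some $n$ with $c_{d-1}(H(d,n)) = m$. Taking $G = H(d,n)$ and $\ell = d-1$ gives
$$\frac{c_{d-1}(G)}{c_d(G)} = \frac{m}{d} = \frac{at}{bt} = \frac{a}{b} = \rho.$$
The freedom to scale both $d$ and $m$ by the common factor $t$ is exactly what lets us realize any rational, not merely those of the form $m/d$ with small denominator; since $c_d(H(d,n))$ is rigidly equal to $d$, arbitrary denominators must be obtained by letting the dimension $d$ itself range.

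I expect no genuine obstacle here: the computation is routine once the setup is in place, and the substantive work is deferred to Lemma \ref{lem:product_of_cliques_any_value}, which we may assume. The only real idea is the scaling trick in the main step, combined with the recognition that at visibility equal to the diameter the cop number collapses to $d$ while at visibility $d-1$ it remains fully flexible.
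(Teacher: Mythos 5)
Your proof is correct and follows essentially the same route as the paper: both realize the ratio $m/d$ via Lemma \ref{lem:product_of_cliques_any_value} for the numerator $c_{d-1}(H(d,n)) = m$ and the diameter argument with the Neufeld--Nowakowski result for the denominator $c_d(H(d,n)) = d$. Your write-up is in fact slightly more complete, since you make explicit the scaling step (writing $\rho = at/(bt)$ with $bt \ge 3$) that the paper compresses into ``the theorem then follows immediately.''
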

\begin{proof}
We claim that for all positive integers $m$ and $d$ with $m \ge d$, there exists a graph $G$ and positive integer $\ell$ such that $c_{\ell}(G) = m$ and $c_{\ell+1}(G) = d$; the theorem then follows immediately.  By Lemma \ref{lem:product_of_cliques_any_value}, there exists some $n$ such that $c_{d-1}(H(d,n)) = m$.  In addition, since $H(d,n)$ has diameter $d$, we must have $c_d(H(d,n)) = c(H(d,n))$, which was shown to equal $d$ by Neufeld and Nowakowski (\cite{NN98}, Theorem 2.9).
\end{proof}

Finally, we turn our attention to Problem 5.1 from \cite{CCDDFM20}.

\begin{problem}[\cite{CCDDFM20}, Problem 5.1]
Does there exist a graph $G$ for which
\[c_0(G) > c_1(G) > \dots > c_{\rad(G)}(G) > c(G)?\]
\end{problem}

While we are unable to resolve the question in whole, we give a construction that \textit{nearly} resolves the question in the affirmative.  We begin with a lemma.  As mentioned earlier, Neufeld and Nowakowski \cite{NN98} showed that for all positive integers $d$ and $n$, we have $c(H(d,n)) = d$.  We will need a slightly stronger form of this result; in particular, we will need to guarantee that the cops can not only capture the robber, they can do so while preventing him from ever reaching a specific vertex.

\begin{lemma}\label{lem:cop_number_hamming_protect}
If $G = H(d,n)$ for some positive integers $d$ and $n$, then $d$ cops have a strategy to capture a robber on $G$ wherein the robber can never reach vertex $(1,\dots, 1)$ without being captured on the subsequent cop turn.
\end{lemma}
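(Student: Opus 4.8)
The plan is to reuse the explicit $d$-cop strategy from the proof of Lemma \ref{lem:product_of_cliques_any_value}, observe that it already has exactly the structural feature needed to guard $(1,\dots,1)$, and then verify the guarding property directly. Relabel so that the protected vertex is $b = (1,\dots,1)$. Recall that in that strategy the cops $C_1, \dots, C_d$ begin on $(1,\dots,1), (2,1,\dots,1), \dots, (d,1,\dots,1)$; cop $C_i$ never alters coordinate $i$ except on the move that captures; and on each turn $C_i$ changes the lowest-indexed coordinate other than $i$ in which she disagrees with the robber so as to agree with him. The capture argument given there does not depend on $n$: it only uses that each $K_n$ lets a cop match any one coordinate in a single move, and that the robber may move in each coordinate at most $d-1$ times before some $C_i$ agrees with him in all coordinates but the $i$th. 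Hence the same strategy captures the robber on $H(d,n)$ for every $n$ (consistent with $c(H(d,n)) = d$ from \cite{NN98}). The key structural fact I would isolate is that coordinate $i$ of $C_i$ is pinned to $1$ throughout the game; in particular every cop always agrees with $b$ in her own special coordinate, and so $\operatorname{dist}(C_i,b)$ equals the number of coordinates $j \ne i$ with $(C_i)_j \ne 1$.

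Granting capture, I would reduce protection to a single local statement. The robber can reach $b$ for the first time only from a neighbor $v$ differing from $b$ in exactly one coordinate, say coordinate $p$ (so $v_p \ne 1$ and $v_j = 1$ for $j \ne p$). Since the cops do not move during the robber's turn, it suffices to show that, after the cop move immediately preceding the robber's step $v \to b$, some cop lies within distance $1$ of $b$; that cop can then step onto $b$ on the next cop turn and capture. I would supply such a cop via $C_p$. Writing $e = \operatorname{dist}(C_p,b)$, a direct case check on $C_p$'s reaction to the robber at $v$ gives: if $e=0$ then $C_p=b$ and $C_p$ captures the robber at $v$ outright; and if $e\in\{1,2\}$ then (since $v$ equals $1$ in every coordinate other than $p$) the lowest coordinate $C_p$ disagrees with is one of her non-$1$ coordinates, which she resets to $1$, leaving her within distance $1$ of $b$. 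Thus the entire question collapses to showing that $e \le 2$ whenever the robber occupies a neighbor of $b$.

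The hard part — and the main obstacle — is precisely this bound on $e$, i.e. controlling $\operatorname{dist}(C_p,b)$ in terms of $\operatorname{dist}(r,b)$, where $r$ denotes the robber's current position. The natural route is to prove, by induction on rounds, an invariant of the form ``after every cop move, $\operatorname{dist}(C_p,b) \le \operatorname{dist}(r,b)$ for each $p$'' (or, if that proves too strong, the relaxed bound $\operatorname{dist}(r,b)+1$, which still yields $e \le 2$ at $\operatorname{dist}(r,b)=1$). Since a non-$1$ coordinate of $C_p$ can only be created by $C_p$ copying a non-$1$ coordinate of the robber, the non-$1$ coordinates of $C_p$ are ``echoes'' of the robber's support, and the only way the invariant can be threatened is by a \emph{stale} coordinate — one that $C_p$ once copied but that the robber has since reset to $1$ while $C_p$ has not yet re-cleared it. The crucial feature to exploit is the lowest-index-first discipline together with the skipping of coordinate $p$: clearing a coordinate costs the robber one move and costs $C_p$ one move, so the robber cannot simultaneously drive his own distance to $b$ down to $1$ and keep $C_p$'s high-index coordinates stale, since doing the former forces him to stop re-dirtying the very coordinates that would otherwise keep $C_p$ occupied. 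Making this tension quantitative — most plausibly through a refined invariant tracking, for each $p$, the set of coordinates in $\{1,\dots,d\}\setminus\{p\}$ on which $C_p$ and the robber disagree, and how the index ordering forces these to be resolved in step with the robber's own coordinate changes — is the technical heart of the argument. Once it yields $e \le 2$ at the instant the robber sits on a neighbor of $b$, the reduction of the second paragraph finishes the proof. (Throughout I would assume $n \ge d$ so that the stated starting vertices are distinct; for smaller $n$ one simply starts the cops on distinct vertices each agreeing with $b$ in all but one coordinate, which does not affect the analysis.)
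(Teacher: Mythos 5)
Your reduction in the second paragraph is correct, and the invariant you conjecture is in fact true --- but your write-up stops exactly where the proof has to start. You explicitly defer the bound $\operatorname{dist}(C_p,b)\le\operatorname{dist}(r,b)$ (or the relaxed $+1$ version) to a plan (``most plausibly through a refined invariant tracking\dots is the technical heart of the argument''), and a plan is not a proof. This is a genuine gap: without that invariant, nothing rules out $C_p$ carrying three or more stale non-$1$ coordinates at the moment the robber sits on a neighbor of $b$, in which case your case analysis (which needs $e\le 2$) never gets off the ground, and the whole protection claim is unproved.

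The gap is fillable, and the way to fill it is essentially the argument the paper gives (for a slightly different strategy: cyclic coordinate lists $L_i$, with all cops starting at $(1,\dots,1)$). For your cop $C_p$, order her coordinates as the list $1,2,\dots,\widehat{p},\dots,d$ and let $t$ be the maximal prefix of this list on which she agrees with the robber immediately after her move. Two facts do all the work: (i) $t$ never decreases across a robber-move/cop-move pair, because the robber can spoil agreement in at most one prefix position and the lowest-index-first rule repairs exactly that position while the rest of the prefix is untouched; and (ii) $C_p$ has never moved any coordinate at a list position beyond $t$, so those coordinates still hold their initial value $1$ (her single non-$1$ starting coordinate sits at position $1$ of her list, hence lies inside the prefix after her first move). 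Consequently, after every cop move the non-$1$ coordinates of $C_p$ among $j\ne p$ form a subset of the robber's non-$1$ coordinates among $j\ne p$, which is your invariant; in fact it shows that when the robber occupies a neighbor $v$ of $b$ differing in coordinate $p$, cop $C_p$ sits exactly on $b$ after her move and captures on the next turn by your capture rule. This is precisely the dichotomy the paper proves with its monotone prefix variables $t_i$: if $t_d\ge\ell$ then $C_d$ agrees with the robber in coordinate $\ell$, and if $t_d<\ell$ then $C_d$ has never moved coordinate $\ell$, which is therefore still $1$. So you isolated the right invariant and the right reduction; what is missing is the short monotone-prefix argument that actually proves it.
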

\begin{proof}
Consider the game on $G$ with $d$ cops, and label the cops $C_1, \dots, C_d$. 
Initially, all $d$ cops begin the game on vertex $(1, \dots, 1)$.  Henceforth, the cops play as follows.  For all $i \in \{1, \dots, d\}$, in each round of the game, cop $C_i$ moves as follows.  Let $L_i$ denote the list $i+1, i+2, \dots, r, 1, \dots, i$.  Let $j$ denote the earliest entry in this list such that $C_i$'s current position disagrees with the robber's position in coordinate $j$; $C_i$ moves so as to agree with the robber in coordinate $j$.

We claim that this strategy has the desired properties.  We first argue that the cops eventually capture the robber.  To prove this, we will argue that in each round of the game, at least one cop ``makes progress'' toward capturing the robber.  To formalize this notion, for all $i \in \{1, \dots, d\}$, at all points during the game let $t_i$ denote the maximum $t$ such that cop $C_i$'s position agrees with the robber's position in the coordinates corresponding to the first $t$ entries of $L_i$.  Note that $t_i = d$ if and only if cop $C_i$ has captured the robber.  We claim that in the course of each robber turn and subsequent cop turn, no $t_i$ can ever decrease and, moreover, at least one $t_i$ must increase.  

To see this, suppose that the robber has not yet been captured, and consider one robber turn together with the ensuing cop turn.  Suppose first that the robber doesn't move on his turn.  Each cop $C_i$ moves to agree with the robber in the coordinate corresponding to entry $t_i+1$ of $L_i$; thus, each $t_i$ increases by at least 1.  Suppose instead that the robber moves by changing coordinate $k$ of his current position.  Every cop has the opportunity to ``catch up'' with the robber in this coordinate, if her strategy should so dictate; hence, no $t_i$ will decrease over the course of both turns.  Furthermore, since the robber had not been captured prior to his move, no cop agreed with the robber in all $d$ coordinates; in particular, we had $t_k \le d-1$.  Since coordinate $k$ is the last entry in $L_k$, it follows that $t_k$ did not decrease after the robber's move; moreover, cop $C_k$'s strategy will cause $t_k$ to increase after the ensuing cop turn.  Since the $t_i$ are weakly increasing, eventually we must have $t_i = d$ for some $i$, which implies that cop $C_i$ has captured the robber.  Hence, this is a winning strategy for the cops.

Finally, we claim that the strategy prevents the robber from ever reaching vertex $(1,\dots, 1)$ without being captured on the subsequent cop turn.  To see this, fix $\ell \in \{1, \dots, d\}$, and suppose that coordinate $\ell$ of the robber's position is 1 immediately after some cop turn.  If $t_d \ge \ell$, then cop $C_d$ must agree with the robber in coordinate $\ell$, so coordinate $\ell$ of $C_d$'s position is 1.  If instead $t_d < \ell$, then $C_d$ must never have moved in coordinate $\ell$; since coordinate $\ell$ of $C_d$'s position was 1 at the beginning of the game, it must still be 1 now.  It follows that $C_d$ agrees with the robber in every coordinate in which the robber's position is 1; consequently, if the robber ever moves to $(1, \dots, 1)$, then $C_d$ will capture him on the subsequent cop turn, as claimed.
\end{proof}

We are now ready to give a partial answer to Problem 5.1 from \cite{CCDDFM20}.  

\begin{theorem}\label{thm:cop_number_descending}
For every positive integer $r$, there exists a graph $G$ with $\rad(G) = r$ and 
\[c_0(G) > c_1(G) > \dots > c_r(G) = c(G) = r.\]
Moreover, if there exists a graph $G_r$ with $\rad(G_r) = r$, $c_r(G_r) = r+1$, and $c(G_r) = r$, then there exists a graph $G'$ with $rad(G') = r$ and 
\[c_0(G') > c_1(G') > \dots > c_r(G') > c(G') = r.\]
\end{theorem}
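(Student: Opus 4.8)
The plan is to build $G$ by gluing Hamming graphs of every dimension from $1$ up to $r$ at a single common vertex and reading off the chain of cop numbers from the behaviour of the individual pieces. Concretely, for sizes $n_1, \dots, n_r$ to be chosen, let $G$ be obtained from the disjoint union of $H(1,n_1), H(2,n_2), \dots, H(r,n_r)$ by identifying the corner vertex $(1,\dots,1)$ of each copy into a single vertex $v_0$. Each piece meets the rest of $G$ only in the cut vertex $v_0$, and the $(1,\dots,1)$-eccentricity of $H(j,n_j)$ equals its diameter $j$; since the dimension-$r$ piece contains a vertex at distance $r$ from $v_0$, the eccentricity of $v_0$ is $r$ and no vertex has smaller eccentricity, so $\rad(G) = r$. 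Because $v_0$ is a cut vertex whose blocks are the pieces, the perfect-information cop number is the largest block value, $c(G) = \max_j c(H(j,n_j)) = \max_j j = r$.

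The key structural claim I would establish is that gluing at $v_0$ turns the limited-visibility capture number into a maximum over the pieces: $c_{\ell}(G) = \max_j c_{\ell}(H(j,n_j))$ for every $\ell$ (up to an additive constant that the size gaps below render harmless). For the lower bound, the robber commits to whichever piece $P^\ast$ is hardest at visibility $\ell$ and plays an optimal evasion strategy there; cops outside $P^\ast$ are useless until they enter through $v_0$, so fewer than $c_{\ell}(P^\ast)$ cops cannot guarantee capture. For the upper bound, the cops confine the robber to a single piece by controlling $v_0$ and then clear the pieces one at a time; the delicate point is that at the hardest block this must be done without spending an extra ``guard'' cop, and here Lemma~\ref{lem:cop_number_hamming_protect} is exactly what is needed, since the $j$ cops running its protecting strategy on a piece $H(j,n_j)$ both capture any robber there and keep him from ever crossing $v_0$. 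In particular, at $\ell = r$ every piece has diameter at most $r$, so a cop at $v_0$ sees all of $G$; combining this with the protecting strategies for each block shows $c_r(G) = c(G) = r$ exactly, rather than the $r+1$ that Theorem~\ref{thm:seeing_plus_one} alone would give.

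Granting the maximum formula, the chain falls out by choosing the sizes so that the pieces ``switch on'' one level at a time, from shallow to deep. At visibility $\ell$ the piece $H(\ell+1,n_{\ell+1})$ sits at its critical visibility $(\ell+1)-1$, where Theorem~\ref{thm:Gnd_lower} (together with $c'_{\ell} \le c_{\ell}$) gives $c_{\ell}(H(\ell+1,n_{\ell+1})) \ge \ceil{n_{\ell+1}/(\ell+2)}$, while every deeper piece contributes at most its full zero-visibility value $c_0(H(j,n_j))$. I would therefore fix $n_r$ first, large enough that $c_{r-1}(H(r,n_r)) > r$, and then choose $n_{r-1}, n_{r-2}, \dots, n_1$ in turn, each so large that the critical value $\ceil{n_j/(j+1)}$ of $H(j,n_j)$ exceeds the zero-visibility values of all deeper pieces. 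With these gaps the shallowest active piece dominates the maximum at each level, so $c_{\ell}(G) = c_{\ell}(H(\ell+1,n_{\ell+1}))$ strictly exceeds $c_{\ell+1}(G) = c_{\ell+1}(H(\ell+2,n_{\ell+2}))$ for $0 \le \ell \le r-2$, while $c_{r-1}(G) = c_{r-1}(H(r,n_r)) > r = c_r(G) = c(G)$; this is precisely the desired chain.

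For the ``moreover'' statement I would glue the hypothesised gadget $G_r$ onto $G$ as one further block at $v_0$, attached at a vertex of eccentricity $r$ in $G_r$ so that $\rad(G') = r$ is preserved and $c(G') = \max\{c(G), c(G_r)\} = r$. Since each block is a retract of $G'$, Theorem~\ref{thm:monotone_retract} yields $c_r(G') \ge c_r(G_r) = r+1 > r = c(G')$, upgrading the final inequality to a strict one, and taking the $n_j$ above large enough that $c_0(G), \dots, c_{r-1}(G)$ dominate the corresponding fixed values of $G_r$ keeps the remaining inequalities strict through the maximum formula. The main obstacle throughout is the structural claim of the second paragraph: proving that gluing at a cut vertex genuinely converts the limited-visibility capture number into a maximum, and pinning the top of the chain at exactly $c_r(G) = r$ instead of $r+1$ — and that is exactly where the protection guarantee of Lemma~\ref{lem:cop_number_hamming_protect} does the heavy lifting.
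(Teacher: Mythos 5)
Your proposal follows essentially the same route as the paper's proof: the same gluing of $H(1,n_1),\dots,H(r,n_r)$ at the all-ones vertex $v_0$, the same deepest-first choice of sizes to make the dimension-$(\ell+1)$ piece dominate at visibility $\ell$, retraction (Theorem \ref{thm:monotone_retract}) for the lower bounds, a guard cop at $v_0$ with piece-by-piece clearing for the upper bounds, and Lemma \ref{lem:cop_number_hamming_protect} to pin $c_r(G)=c(G)=r$ exactly -- which, as you correctly note, is legitimate only because at visibility $r$ a cop inside a piece sees the whole piece, so the perfect-information protection strategy applies there. The one substantive difference is bookkeeping: the paper uses the minimality of each $n_i$ (via Lemma \ref{lem:product_of_cliques_any_value}) to get exact values $c_\ell(G)=M_\ell$ with the guard cop absorbed into the budget, whereas you carry a $+1$ additive slack in your maximum formula, so your stated gap condition ("the critical value exceeds the zero-visibility values of all deeper pieces") is off by one -- it yields only $c_\ell(G)\ge c_{\ell+1}(G)$, and must be strengthened to "exceeds by at least $2$" to keep the chain strict.
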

\begin{proof}
We begin by constructing a graph $G$ with $\rad(G) = r$ and 
\[c_0(G) > c_1(G) > \dots > c_r(G) = c(G) = r.\]
Loosely, we will do this by combining $r$ suitably-chosen Hamming graphs.

We first let $M_{r-1} = r+1$.  Let $n_{r-1}$ be the minimum $n$ such that $c_{r-1}(H(r,n)) = M_{r-1}$, the existence of which is guaranteed by Lemma \ref{lem:product_of_cliques_any_value}.  Moreover, due to the minimality of $n_{r-1}$, $M_{r-1}$ cops can win the $(r-1)$-visibility game on $H(r,n_{r-1})$ with one cop perpetually occupying vertex $(1, \dots, 1)$.  (This follows from essentially the same argument used to prove the upper bound in Lemma \ref{lem:diff_at_most_one}.)  Let $G_{r-1} = H(r,n_{r-1})$.

Next, let $M_{r-2} = c_{r-2}(G_{r-1})+1$ (and note that $M_{r-2} > c_{r-1}(G_{r-1}) = M_{r-1}$).  Once again, there exists some $n_{r-2}$ such that $c_{r-2}(H(r-1,n_{r-2})) = M_{r-2}$ and, moreover, $M_{r-2}$ cops can win the $(r-2)$-visibility game on $H(r-1,n_{r-2})$ with one cop perpetually occupying vertex $(1, \dots, 1)$; let $G_{r-2} = H(r-1,n_{r-2})$.

We continue in this manner.  For each $i$ from $r-3$ down to 0, we do the following:
\begin{itemize}
\item Let $M_i = \max\{c_i(G_{i+1}), \dots, c_i(G_{r-1})\}+1$;
\item Choose $n_i$ such that $c_{i}(H(i+1,n_i)) = M_i$ and, moreover, $M_i$ cops can win the $i$-visibility game on $H(i+1,n_i)$ with one cop perpetually occupying vertex $(1, \dots, 1)$; and
\item Let $G_i = H(i+1,n_i)$.
\end{itemize}
Note that the definition of $M_i$ immediately implies that $M_0 > M_1 > \dots > M_{r-1} > r$.

Finally, we construct the graph $G$ by identifying the ``all-ones'' vertices from $G_0, \dots, G_{r-1}$.  Let $v_0$ denote the identified vertex.  Since each $G_i$ has radius $i+1$ with the ``all-ones'' vertex as a central vertex, it is clear that $\rad(G) = r$.  We claim that $c_i(G) = M_i$ for all $i \in \{0, \dots, r-1\}$ and that $c_r(G) = c(G) = r$, from which the first part of the theorem will follow.

We first argue that $c_r(G) = c(G) = r$.  Since $G_{r-1}$ is a retract of $G$ (under the retraction that maps all vertices outside of $G_{r-1}$ to vertex $v_0$), we have $c_r(G) \ge c(G) \ge c(G_{r-1}) = c(H(r,n_{r-1})) = r$. 
To show that $c_r(G) \le r$, we give a strategy for $r$ cops to capture a robber in the $r$-visibility game on $G$; since $c(G) \le c_r(G)$, it will also follow that $c(G) \le r$.  All $r$ cops begin on $v_0$.  Because $v_0$ is a central vertex in $G$ and $\rad(G) = r$, the cops will be able to see the robber on their first turn.

If the robber initially occupies a vertex in $G_i$ for some $i \le r-2$, one cop remains on $v_0$, while the other $r-1$ cops follow a strategy to capture the robber in the $r$-visibility game on $G_i$.  The cop on $v_0$ ensures that the cops can see the robber at all times and, moreover, that the robber cannot move to $G_j$ for any $j \not = i$.  Since $G_i$ is a Hamming graph of dimension $i+1$, we have $c(G_i) = i+1 \le r-1$, so the remaining $r-1$ cops can indeed capture the robber on $G_i$.  Suppose instead that the robber initially occupies a vertex in $G_{r-1}$.  
This time, we have $c(G_{r-1}) = c(H(r,n_{r-1})) = r$, so we do not have a ``free'' cop to position on $v_0$; however, by Lemma \ref{lem:cop_number_hamming_protect}, $r$ cops can capture a robber on $G_{r-1}$ without allowing him to access $v_0$ (thereby preventing him from escaping to any other $G_i$).

Now consider the $\ell$-visibility game with $\ell \in \{0, \dots, r-1\}$; we aim to show that $c_{\ell}(G) = M_{\ell}$.  By construction, we have $c_{\ell}(G_{\ell}) = M_{\ell}$; since $G_{\ell}$ is a retract of $G$, we have $c_{\ell}(G) \ge c_{\ell}(G_{\ell}) = M_{\ell}$.  To establish the matching upper bound, we give a strategy for $M_{\ell}$ cops to capture a robber on $G$.  One cop will occupy $v_0$ for the duration of the game; as before, this ensures that the robber can never move from $G_i$ to $G_j$ for $i \not = j$.  

If the robber initially occupies a vertex of $G_i$ for $i < \ell$, then the cop on $v_0$ will be able to see him at all times, and -- since $c(G_i) = i+1 \le r < M_{\ell}$ -- the remaining $M_{\ell}-1$ cops will be able to capture the robber.  If instead the robber occupies a vertex of $G_i$ for $i > \ell$, then the remaining $M_{\ell}-1$ cops iteratively search through $G_{\ell+1}, \dots, G_{r-1}$ until the robber is found; by construction $M_{\ell}-1 = \max\{c_{\ell}(G_{\ell+1}), \dots, c_{\ell}(G_{r-1})\}$, so $M_{\ell}-1$ cops suffice for this task.  Finally, suppose the robber initially occupies a vertex of $G_{\ell}$.  By construction, $M_{\ell}$ cops can capture a robber on $G_{\ell}$ even with one cop permanently occupying $v_0$; thus, once again, the cops can capture the robber.  It follows that the graph $G$ has the desired properties.

For the second part of the theorem, let $G_r$ be a graph with radius $r$ such that $c_r(G_r) = r+1$ and $c(G_r) = r$.  We show how to construct a graph $G'$ such that
\[c_0(G') > c_1(G') > \dots > c_r(G') > c(G') = r.\]
Our construction is nearly the same as that for $G$, with the following changes:
\begin{itemize}
\item Instead of taking $M_{r-1} = r+1$, we take $M_{r-1} = c_{r-1}(G_r)+1$;
\item Similarly, for $i < r-1$, instead of taking $M_i = \max\{c_i(G_{i+1}), \dots, c_i(G_{r-1})\}+1$, we take $M_i = \max\{c_i(G_{i+1}), \dots, c_i(G_{r-1}), c_i(G_r)\}+1$;
\item We form $G'$ by identifying the ``all-ones'' vertices from $G_0, G_1, \dots, G_{r-1}$, along with any central vertex from $G_r$.
\end{itemize}

It is clear from the construction that $\rad(G') = r$.  To see that $c(G') = r$, first note that since $G_r$ is a retract of $G'$, we have $c(G') \ge c(G_r) = r$.  For the matching upper bound, we explain how $r$ cops can capture a robber on $G'$.  The cops all begin on vertex $v_0$.  As before, if the robber begins in $G_i$ for some $i \in \{1, \dots, r-2\}$, then one cop  remains on $v_0$ -- thereby preventing the robber from fleeing $G_i$ -- while the remaining $r-1$ follow a winning strategy on $G_i$.  Similarly, if the robber begins in $G_{r-1}$, then Lemma \ref{lem:cop_number_hamming_protect} shows that the cops can capture the robber without letting him safely reach $v_0$ at any point.  Finally, suppose the robber begins in $G_r$.  The cops follow a winning strategy for the game on $G_r$; if the robber ever reaches $v_0$ and flees to some other $G_i$, then the cops continue playing on $G_r$ as if the robber were still on $v_0$.  Eventually, either the cops capture the robber, or some cop reaches $v_0$ while the robber inhabits some other $G_i$.  In the former case, the cops win; in the latter case, the cops can now capture the robber in $G_i$ as argued above.

Thus $c(G') = r$; arguments similar to those used above can be used to show that $c_{\ell}(G) = M_{\ell}$ for all $\ell \in \{0, \dots, r\}$.
\end{proof}

To conclude the paper, we leave the reader with a few open questions suggested by our work.
\begin{itemize}
\item Is it true that $c'_{\ell}(H(d,n)) = \Theta(n^{d-\ell})$?  Corollary \ref{cor:Gnd_lower_visibility_upper_bound} yields the needed asymptotic upper bound.  It seems likely to the authors that the lower bound could be established using a standard isoperimetric argument -- that is, arguing that with $o(n^{d-\ell})$ cops, when half (or so) of the vertices of $H(d,n)$ are dirty, vertices become recontaminated faster than the cops can clean them.  However, the details seem potentially messy.\\

\item Is it true that $c'_{d-1}(H(d,n)) = n/(d+1) + O(1)$?  Observation \ref{obs:simple} and Theorem \ref{thm:clique_product_search} show that this is the case when $d=1$ and $d=2$, and Theorem \ref{thm:Gnd_lower} provides the lower bound for general $d$, but a general upper bound eludes us.\\

\item Is there a graph $G$ with radius $r$ such that $c_r(G) = r+1$ and $c(G) = r$?  Together with Theorem 5.6, this would yield a full resolution of Problem 5.1 from \cite{CCDDFM20}.
\end{itemize}

\end{section}


\begin{thebibliography}{99}
\bibitem{AF84} 
M. Aigner and M. Fromme, A game of cops and robbers, \textit{Discrete Appl. Math.} 8 (1984), 1--12.

\bibitem{BN11} 
A. Bonato and R.J. Nowakowski, \textit{The Game of Cops and Robbers on Graphs}, American Mathematical Society, Providence, Rhode Island, 2011.

\bibitem{CCDDFM20}
N.E. Clarke, D. Cox, C. Duffy, D. Dyer, S.L. Fitzpatrick, and M.E. Messinger, Limited visiblity Cops and Robber, \textit{Discrete Appl. Math.} 282 (2020), 53--64.

\bibitem{Dav25}
A. Davies, personal communication.

\bibitem{DDTY13}
D. Dereniowski, D. Dyer, R.M. Tifenbach, and B. Yang, Zero-visibility cops \& robber game on a graph, in: \textit{Frontiers in Algorithmics and Algorithmic Aspects in Information and Management}, in: \textit{Lecture Notes in Computer Science} 7924 (2013) 175--186.

\bibitem{DDTY15a}
D. Dereniowski, D. Dyer, R.M. Tifenbach, and B. Yang, Zero-visibility cops and robber and the pathwidth of a graph, \textit{J. Comb. Optim.} 29 (2015), no. 3, 541--564. 

\bibitem{DDTY15b}
D. Dereniowski, D. Dyer, R.M. Tifenbach, and B. Yang, The complexity of zero-visibility cops and robber, \textit{Theoret. Comput. Sci.} 607 (2015), 135--148.

\bibitem{KMP13}
A. Kehagias, D. Mitsche, and P. Pra\l{}at, Cops and invisible robbers: The cost of drunkenness, \textit{Theoret. Comp. Sci.} 481 (2013), 100–-120.

\bibitem{NW83} 
R.J. Nowakowski and P. Winkler, Vertex-to-vertex pursuit in a graph, \textit{Discrete Math.} 43 (1983), 235--239.

\bibitem{NN98} 
R. Nowakowski and S. Neufeld, A game of cops and robbers played on products of graphs, \textit{Discrete Math.} 186 (1998), 253--268.

\bibitem{Pra10}
P. Pra\l{}at, When does a random graph have constant cop number?,
\textit{Australasian J. Comb.} 46 (2010) 285--296.

\bibitem{Qui78} 
A. Quilliot, Jeux et pointes fixes sur les graphes, Th\`{e}se de 3\`{e}me cycle, Univerit\'{e} de Paris VI, 1978, 131--145.

\bibitem{Tan04}
A. Tang, Cops and robber with bounded visibility (Master’s thesis), Dalhousie University, 2004.

\end{thebibliography}
\end{document}